\newtheorem{theorem}{Theorem}[section]
\newtheorem{lemma}[theorem]{Lemma}
\newtheorem{example}[theorem]{Example}
\newtheorem{corollary}[theorem]{Corollary}
\newtheorem{proposition}[theorem]{Proposition}
\newtheorem{remark}[theorem]{Remark}
\newtheorem{definition}[theorem]{Definition}
\newtheorem{definitions}[theorem]{Definitions}
\newcommand{\N}{\mathbb{N}}
\begin{document}
\title{\textbf{{More about cofinally complete metric spaces}}}

\author{Lipsy}
\address{Department of Mathematics, Indian Institute of Technology Delhi, New Delhi-110016, India}
\email{lipsy1247@gmail.com}

\author{Manisha Aggarwal}
\address{Department of Mathematics, St. Stephen's College, Delhi 110007, India}
\email{manishaaggarwal.iitd@gmail.com}

\author{S. Kundu}
\address{Department of Mathematics, Indian Institute of Technology Delhi, New Delhi-110016, India}
\email{skundu@maths.iitd.ac.in}
\thanks{}

\begin{abstract}
Metric spaces satisfying properties stronger than completeness
and weaker than compactness have been studied by many authors over the
years. One such significant family is that of cofinally complete metric spaces. We discuss the relationship between cofinally complete metric spaces and the family of almost uniformly continuous functions, which has recently been introduced by Kyriakos Keremedis in \cite{[K]}. We also discuss several equivalent conditions for metric spaces whose completions are cofinally complete in terms of some geometric functional.
\end{abstract}

\keywords{Cofinally complete; AUC space; almost uniformly continuous; almost bounded; CC-regular; cofinally Cauchy sequence; totally bounded; Cauchy-continuous; Atsuji space.}
\subjclass[2010]{54E50, 54C05, 54E40}


\maketitle

\section{Introduction}
The concepts of completeness and compactness play an  inevitable role in the theory of metric spaces. Some classes of metric spaces satisfying properties stronger than completeness and weaker than compactness have been the subject of study for a number of articles over the years. One such well known metric space is Atsuji space, also widely known as UC space. A metric space $(X, d)$ is called an Atsuji space if every real-valued continuous function on $X$ is uniformly continuous. Probably J. Nagata was the first one to study such spaces in 1950 in \cite{[JN]}, while in 1958 several new equivalent characterizations of such spaces were studied by M. Atsuji in \cite{[MA]}. Moreover, a wide collection of equivalent conditions for a metric space to be an Atsuji space can be found in the survey article \cite{[KJ1]} by Kundu and Jain. \\
\indent For discussing completeness of a metric space, one has to consider its corresponding Cauchy sequences. Recall that a sequence $(x_n)$ in a metric space $(X, d)$ is said to be Cauchy if for each $\epsilon >0$, there exists a residual set of indices $\mathbb{N}_\epsilon$ such that each pair of terms whose indices come from $\mathbb{N}_\epsilon$ are within $\epsilon$ distance of each other, and $(X, d)$ is called complete if each Cauchy sequence in $X$ clusters. In 1971, Howes \cite{[H2]} generalized the notion of Cauchy sequences in terms of nets, which he called cofinally Cauchy; replacing "residual" by "cofinal" in the definition, we obtain sequences that are called cofinally Cauchy. More precisely, a sequence $(x_n)$ in a metric space $(X,d)$ is said to be cofinally Cauchy if for every $\epsilon>0$, there exists an infinite subset $\mathbb{N}_\epsilon$ of $\mathbb{N}$ such that for each n, j $\in \mathbb{N}_\epsilon$, we have $d(x_n,x_j)<\epsilon$. Much later in 2008, Beer \cite{[B1]} cast new light on those metric spaces in which each cofinally Cauchy sequence has a cluster point and called those metric spaces to be cofinally complete metric spaces. Evidently, the cofinally complete metric spaces sit between the complete metric spaces and the Atsuji spaces. Motivating from the significance of Cauchy-continuous functions, recently in 2016, Aggarwal and Kundu \cite{[AK1]} defined another class of functions called cofinally Cauchy regular (or CC-regular for short). A function $f: (X,d) \rightarrow (Y,\rho)$ between two metric spaces $(X, d)$ and $(Y, \rho)$ is called Cauchy-continuous (also referred as Cauchy-sequentially regular; CS-regular for short) if for any Cauchy sequence $(x_n)$ in $(X,d)$, ($f(x_{n}))$ is Cauchy in $(Y,\rho)$, while $f$ is called cofinally Cauchy regular (or CC-regular) if $f(x_n)$ is cofinally Cauchy in $(Y,\rho)$ for every cofinally Cauchy sequence $(x_n)$ in $(X,d)$. In \cite{[AK2]}, the authors gave several equivalent characterizations of a cofinally complete metric space in terms of CC-regular functions.\\
\indent Functions play a significant role in the theory of metric spaces. Two important classes of functions, namely the class of continuous functions and that of uniformly continuous functions, are well known to all of us. A function $f: (X,d) \rightarrow (Y,\rho)$ between two metric spaces $(X, d)$ and $(Y, \rho)$ is called uniformly continuous if $\forall~\epsilon >0,~\exists$ a $\delta >$ 0 such that for every $A \subseteq X$, if the diameter of $A$ is $< \delta$ then $f(A) \subseteq B(b, \epsilon)$ for some $b \in$ $Y$. Recently in 2017, Keremedis \cite{[K]} generalized the defintion of uniformly continous functions and defined a new class of functions which he called almost bounded functions. If we replace one open ball of radius $\epsilon$ in the metric space $Y$ by union of finite open balls of radius $\epsilon$ in the definition of uniformly continuous functions, we get the definition of almost bounded functions. More precisely, A function $f$ from a metric space $(X,d)$ to a metric space $(Y, \rho)$  is called \textit{almost bounded} if for $\forall~\epsilon > $ 0 $\exists$ a $\delta >$ 0 such that for every $A \subseteq X$, if the diameter of $A$ is $< \delta$ then there is a finite subset $B = \left\{b_{1}, b_{2}, ...,b_{n}\right\}$ of $Y$ such that $f(A) \subseteq \displaystyle\bigcup_{i=1}^{n}B(b_{i}, \epsilon)$. It is well known that the class of uniformly continous functions is contained in the class of continous functions. Unfortunately, almost bounded functions need not be continuous. Hence Keremedis defined those functions which are continuous as well as almost bounded and called them almost uniformly continous functions. He also cast light on those metric spaces on which every continuous function is almost uniformly continuous. Similar to the definition of UC spaces, he called such metric spaces almost uniformly continuous spaces or AUC spaces for short. The class of AUC spaces lies strictly in between the class of Atsuji spaces and the class of complete metric spaces. This has inspired us to find relationship between cofinally complete metric spaces and AUC spaces. The second section of this paper studies the relation between cofinally complete metric sapces and AUC spaces through some functions which are used to characterize the aforesaid metric spaces. Meanwhile, we obtain new characterizations of totally bounded metric spaces.\\
\indent In the third section, we explore some interesting properties of almost uniformly continuous spaces. We exploit some generalizations of asymptotic sequences defined in \cite{[AK1]} to obtain sequential criteria of almost uniformly continuous functions. Recently in \cite{[AK2]}, authors have found several interesting classes of bounded real-valued functions on metric spaces and subsequent characterization of such metric spaces. This has inspired us to characterize those metric spaces on which every almost uniformly continuous function is bounded. Interestingly such metric spaces are equivalent to totally bounded metric spaces. We also obtain a new characterization of cofinally complete metric spaces and Atsuji spaces. \\
\indent In section 4, we briefly study those maps which preserve cofinally complete subspaces of a metric spaces $(X,d)$. In the last section, we give some characterizations for a metric space to have cofinal completion.
\\
\indent The symbols $\mathbb{R}$, $\mathbb{N}$ and $\mathbb{Q}$ denote the sets of real numbers, natural numbers	and rational numbers respectively. Unless mentioned otherwise, $\mathbb{R}$ and its subsets
carry the usual distance metric and all metric spaces are infinite. If $(X, d)$ is a
metric space, $x \in X$ and $\delta > 0$, then $B(x, \delta)$ denotes the open ball in $(X, d)$, centered at $x$ with radius $\delta$. Also, $(\widehat{X}, d)$ denotes the completion of a metric space $(X,d)$.
\medskip
\section{Relation Between Cofinally Complete Metric Spaces and AUC Spaces}

 In 1971, Howes \cite{[H2]} introduced a new class of sequences, obtained by replacing residual with cofinal in the definition of Cauchy sequences and called them cofinally Cauchy sequences. In 2008, Beer 
 \cite{[B1]} gave various characterizations of the metric spaces in which every cofinally Cauchy sequence clusters, knows as cofinally complete metric spaces. Let us state their precise definitions.
\begin{definition}
	A sequence $(x_{n})$ in a metric space $(X,d)$ is called cofinally Cauchy if $\forall \epsilon > 0$, there exists an infinite subset $\mathbb{N}_\epsilon$ of $\mathbb{N}$ such that for each $n,j \in \mathbb{N}_\epsilon$, we have $d(x_n,x_j) < \epsilon.$\\
	\indent A metric space $(X, d)$ is said to be cofinally complete if every cofinally Cauchy sequence in $X$ clusters, while $(X, d)$ is called an Atsuji space if every real-valued continuous function on $X$ is uniformly continuous.
\end{definition}
The class of Cauchy continuous functions is a well-known example of a class that lies strictly between the class of uniformly continuous functions and that of continuous functions. Recently in \cite{[AK1]}, another class of functions has been introduced that preserves cofinally Cauchy sequences.
\begin{definition}
	A function $f: (X,d) \rightarrow (Y,\rho)$ between two metric spaces is said to be Cauchy-continuous if ($f(x_{n}))$ is Cauchy in $(Y,\rho)$ for every Cauchy sequence $(x_{n})$ in $(X,d)$ while $f$ is called cofinally Cauchy regular (or CC-regular) if $f(x_n)$ is cofinally Cauchy in $(Y,\rho)$ for every cofinally Cauchy sequence $(x_n)$ in $(X,d)$.
\end{definition}
\noindent In \cite{[AK2]}, the authors gave a useful equivalent characterization of cofinally complete metric spaces: 	A metric space $(X, d)$ is cofinally complete if and only if each continuous function from $(X, d)$ to any metric space $(Y, \rho)$ is CC-regular.
\noindent A metric space $(X,d)$ is called an Atsuji space if every real-valued continuous function on $(X,d)$ is uniformly continuous. Since every Atsuji space is cofinally complete, one may think of having a similar characterization of cofinally complete metric spaces. In fact, in \cite{[B1]} Beer provided a nice equivalent condition in this regard: A metric space $(X, d)$ is cofinally complete if and only if each continuous function defined on it is uniformly locally bounded. Let us recall the definition of uniformly locally bounded functions.

\begin{definition}
	A function $g: (X,d)\rightarrow (Y,\rho)$ between two metric spaces is called \textit{uniformly locally bounded}\index{uniformly locally bounded} if $\exists$ $ \delta>0$ such that $\forall$ $ x\in X,$ $ g(B_d(x,\delta))$ is a bounded subset of $(Y, \rho)$.
\end{definition}

\begin{proposition}\label{P1}
	A metric space $(X, d)$ is cofinally complete if and only if each continuous function defined on it is uniformly locally bounded.
\end{proposition}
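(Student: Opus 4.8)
The plan is to prove both implications by contraposition, and throughout I would lean on one elementary observation: if a sequence is cofinally Cauchy, then for every $\varepsilon>0$ infinitely many of its terms lie in a single ball of radius $\varepsilon$, because an infinite index set on which the pairwise distances are $<\varepsilon$ puts all of the corresponding terms inside $B(x_{m_0},\varepsilon)$ for any one of its members $x_{m_0}$. This is exactly the feature that interacts with the \emph{uniform} radius $\delta$ appearing in the definition of uniform local boundedness, and it is the bridge between the two notions.

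For the ``if'' direction I would argue contrapositively. Assume $(X,d)$ is not cofinally complete and fix a cofinally Cauchy sequence $(x_n)$ with no cluster point. Then $S=\{x_n:n\in\mathbb N\}$ has no accumulation point, so it is an infinite closed discrete subset of the (normal) metric space $X$, and no value recurs infinitely often. Enumerating the distinct values as $s_1,s_2,\dots$, set $h(s_k)=k$; this $h$ is automatically continuous on the discrete set $S$, so by the Tietze extension theorem it extends to a continuous $g\colon X\to\mathbb R$. For any $\delta>0$, the observation above yields an infinite index set $M$ with $\{x_n:n\in M\}\subseteq B(x_{m_0},\delta)$; since $M$ is infinite and each value recurs only finitely often, $g$ takes infinitely many distinct positive integer values there, so $g(B(x_{m_0},\delta))$ is unbounded. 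Thus $g$ is continuous but not uniformly locally bounded, as required.

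For the ``only if'' direction, assume $(X,d)$ is cofinally complete and suppose for contradiction that a continuous $g\colon X\to(Y,\rho)$ is not uniformly locally bounded. Taking $\delta=1/n$ in the negation produces points $x_n$ with $g(B(x_n,1/n))$ unbounded. First I would verify that $(x_n)$ has no cluster point: at any cluster point $x$, continuity confines $g$ to a bounded set on some $B(x,\delta)$, while for large indices along a subsequence tending to $x$ one has $B(x_n,1/n)\subseteq B(x,\delta)$, contradicting unboundedness. Since $(x_n)$ has no cluster point and $X$ is cofinally complete, $(x_n)$ cannot be cofinally Cauchy, so there is an $\varepsilon_0>0$ such that every infinite set of indices contains a pair at distance $\ge\varepsilon_0$. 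Now inside each $B(x_n,1/n)$ choose points $p_{n,j}$ with $\rho(g(p_{n,j}),y_0)\ge j$; enumerating the whole family $\{p_{n,j}\}$ as one sequence gives a cofinally Cauchy sequence, because whenever $2/n<\varepsilon$ the block $\{p_{n,j}:j\in\mathbb N\}$ provides infinitely many indices with pairwise distances $<\varepsilon$. Cofinal completeness then forces a cluster point $a$.

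The heart of the argument, and the step I expect to be the main obstacle, is extracting a contradiction from this cluster point, which demands careful metric bookkeeping. By continuity I would fix $r<\varepsilon_0/4$ with $g(B(a,r))$ bounded, and use that $B(a,r)$ contains infinitely many terms of the block sequence. Boundedness of $g$ on $B(a,r)$ caps the internal index $j$ of every such term, so only finitely many terms come from each block; hence infinitely many distinct blocks $n$ contribute a point $p_{n,j}\in B(a,r)$, giving $d(x_n,a)<r+1/n$. Restricting to those blocks with $1/n<\varepsilon_0/4$ yields infinitely many $x_n$ whose pairwise distances are at most $2r+1/n+1/n'<\varepsilon_0$, contradicting the defining property of $\varepsilon_0$. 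The delicate points to get right are the simultaneous choice of $r$ (small enough both for $g$-boundedness near $a$ and relative to $\varepsilon_0$) and the passage from ``finitely many terms per block land in $B(a,r)$'' to ``infinitely many distinct blocks contribute,'' which is what converts the single cluster point into the forbidden cluster of centers $x_n$.
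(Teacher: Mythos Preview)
The paper does not supply its own proof of this proposition: it is quoted as a known result of Beer \cite{[B1]} and stated without argument, so there is nothing in the paper to compare your attempt against directly.

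That said, your proof is correct and self-contained. Both directions go through as written. In the ``if'' direction the key facts---that a cofinally Cauchy sequence with no cluster point has infinitely many distinct terms forming a closed discrete set, and that any $\delta$-ball around one of its terms captures infinitely many of them---are exactly what is needed, and the Tietze extension produces the desired non-uniformly-locally-bounded function. In the ``only if'' direction your bookkeeping is sound: the uniform bound on $g$ near the cluster point $a$ caps the index $j$ uniformly, forcing infinitely many distinct block indices $n$ to contribute a point to $B(a,r)$; the triangle-inequality estimate $d(x_n,x_{n'})<2r+1/n+1/n'<\varepsilon_0$ then yields an infinite set of centers with pairwise distances below $\varepsilon_0$, contradicting the failure of cofinal Cauchyness. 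The only cosmetic point worth tightening is to specify an enumeration of $\{p_{n,j}\}$ (e.g.\ a diagonal enumeration) so that each block corresponds to an infinite set of indices, which you implicitly assume when invoking cofinal Cauchyness of the combined sequence.
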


\noindent  Recall that a function $f: (X,d) \rightarrow (Y,\rho)$ between two metric spaces $(X, d)$ and $(Y, \rho)$ is called uniformly continous if $\forall~\epsilon >0,~\exists$ a $\delta >$ 0 such that for every $A \subseteq X$, if the diameter of $A$ is $< \delta$ then $f(A) \subseteq B(b, \epsilon)$ for some $b \in$ $Y$. Recently in 2017, Keremedis \cite{[K]} generalized the notion of uniformly continuous functions and defined a new class of functions which he called almost bounded functions, but almost bounded functions need not be continuous, so he defined another class of functions which are continuos and almost bounded, and called them almost uniformly continuous functions. Let us give the precise definitions.

\begin{definition}
	A function $f$ from a metric space $(X,d)$ to a metric space $(Y, \rho)$  is called \textit{almost bounded} if for $\forall~\epsilon > $ 0 $\exists$ a $\delta >$ 0 such that for every $A \subseteq X$, if the diameter of $A$ is $< \delta$ then there is a finite subset $B = \left\{b_{1}, b_{2}, ...,b_{n}\right\}$ of $Y$ such that $f(A) \subseteq \displaystyle\bigcup_{i=1}^{n}B(b_{i}, \epsilon)$.
\end{definition}
\begin{definition}
	A continuous almost bounded function from a metric space $(X,d)$ to a metric space $(Y, \rho)$ is called \textit{almost uniformly continuous}.
\end{definition}

Similar to the definition of Atsuji spaces (UC spaces), the author in \cite{[K]} defined AUC spaces as follows.
\begin{definition}
	A metric space $(X,d)$ is said to be an \textit{almost uniformly continuous or AUC} if every continuous real valued function on $X$ is almost uniformly continuous.
\end{definition}

\begin{remark}
	Every uniformly continous function is almost uniformly continuous, but an almost uniformly continuous function need not be even Cauchy continuous. For example, let $X= \{\frac{1}{n}: n \in \N \}$ with the usual metric and let $f:X \rightarrow \mathbb{R}$ be defined as:\\
	$$f\bigg(\frac{1}{n}\large\bigg)=\left\{ \begin{array}{lll}
	1     & : & n ~is ~odd\\
	2     & :&  n ~is ~even
	\end{array}\right\}$$
\end{remark}
\noindent Now we state relations between the above defined functions used to characterize AUC spaces and cofinally complete spaces. Consequently, we get relationship between AUC spaces and cofinally complete spaces.
\begin{proposition}
	Let $f$ be a an almost bounded function from a metric space $(X,d)$ to a metric space $(Y, \rho)$, then $f$ is uniformly locally bounded.
\end{proposition}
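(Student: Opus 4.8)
The plan is to extract a single usable $\delta$ from the almost boundedness condition at one fixed scale and then argue that balls of a slightly smaller radius are mapped into bounded sets. First I would apply the definition of almost boundedness with the particular choice $\epsilon = 1$ (any fixed positive value would serve equally well). This yields a $\delta > 0$ with the property that every subset $A \subseteq X$ of diameter less than $\delta$ satisfies $f(A) \subseteq \bigcup_{i=1}^{n} B(b_i, 1)$ for some finite collection $\{b_1, \ldots, b_n\} \subseteq Y$ that is allowed to depend on $A$.

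Next I would fix the uniform radius to be $\eta = \delta/3$ and verify that it witnesses uniform local boundedness. For an arbitrary $x \in X$, the open ball $B_d(x, \eta)$ has diameter at most $2\eta = 2\delta/3 < \delta$, so it qualifies as an admissible set $A$ in the previous step. Hence $f(B_d(x, \eta))$ is contained in a finite union of open balls of radius $1$ in $(Y, \rho)$.

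It then remains to observe that any finite union $\bigcup_{i=1}^{n} B(b_i, 1)$ is a bounded subset of $(Y, \rho)$: its diameter is bounded above by $\max_{1 \le i, j \le n} \rho(b_i, b_j) + 2$, a finite quantity. Consequently $f(B_d(x, \eta))$ is bounded for every $x \in X$, which is exactly the assertion that $f$ is uniformly locally bounded, with the single radius $\eta$ serving for all points.

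I do not anticipate a genuine obstacle here; the argument is essentially a matter of unwinding the two definitions against one another. The only point requiring a little care is the passage from the diameter condition, which is a strict inequality, to the balls: choosing the local radius to be $\delta/3$ rather than $\delta/2$ guarantees that the diameter of each ball stays strictly below $\delta$, so that the almost boundedness hypothesis applies with no boundary issue.
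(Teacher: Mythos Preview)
Your argument is correct and follows essentially the same route as the paper's proof: apply the almost boundedness condition at a single fixed $\epsilon$, use the resulting $\delta$ to cover images of small balls by finitely many $\epsilon$-balls, and conclude boundedness. If anything, your version is slightly more careful than the paper's, which applies the diameter hypothesis directly to $B(x,\delta)$ without shrinking the radius; your choice $\eta=\delta/3$ makes the strict diameter inequality explicit.
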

\begin{proof}
	Let $f$ be an almost bounded function from a metric space $(X,d)$ to a metric space $(Y, \rho)$. Thus $\forall~\epsilon > $ 0 $\exists$ a $\delta >$ 0 such that  $\forall~{x}\in X,~ \exists$ finite subset $\left\{y_{1}, y_{2}, ...,y_{n}\right\}$ of $Y$ such that
	$f(B_{x}(x, \delta)) \subseteq \displaystyle\bigcup_{i=1}^{n}B(y_{i}, \epsilon).$
	So we get a $\delta$ such that $\forall~x \in X$, $f(B_{x}(x, \delta))$ is a bounded subset, which implies $f$ is uniformly locally bounded. 
\end{proof}

\begin{remark}
	A uniformly locally bounded need not be almost bounded, e.g., let $X=\{\frac{1}{n} : n \in \mathbb{N}\}$ with the usual topology and let $Y=\{n : n \in \mathbb{N}\}$ with the metric: $d(x,y) = 1$ if $x \neq y$, otherwise 0. Define $f$ from $X$ to $Y$ such that $f(\frac{1}{n}) = n~\forall n \in \mathbb{N}$.
\end{remark}

\begin{proposition}
	Every almost uniformly continuous function from from a metric space $(X,d)$ to a metric space $(Y, \rho)$ is \textit{CC-regular}.
\end{proposition}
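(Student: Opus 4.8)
The plan is to unwind the two definitions and bridge them with a pigeonhole argument on a finite cover. Let $(x_n)$ be a cofinally Cauchy sequence in $(X,d)$; to show $f$ is CC-regular I must produce, for each $\epsilon > 0$, an infinite set of indices on which the images $f(x_n)$ lie within $\epsilon$ of one another. Fix $\epsilon > 0$. First I would invoke the almost boundedness of $f$ at scale $\epsilon/2$ to obtain a $\delta > 0$ such that every subset of $X$ of diameter less than $\delta$ is carried by $f$ into a finite union $\bigcup_{i=1}^{n} B(b_i, \epsilon/2)$ of balls in $Y$.

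Next I would feed this $\delta$ into the cofinally Cauchy hypothesis, applied at the smaller scale $\delta/2$ (to secure a strict diameter bound): there is an infinite set $M \subseteq \mathbb{N}$ with $d(x_m, x_j) < \delta/2$ for all $m, j \in M$. Hence $A := \{x_m : m \in M\}$ has diameter at most $\delta/2 < \delta$, so by the choice of $\delta$ we get $f(A) \subseteq \bigcup_{i=1}^{n} B(b_i, \epsilon/2)$ for some finite set $\{b_1, \dots, b_n\} \subseteq Y$.

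The crux is the pigeonhole step. Since $M$ is infinite while the cover consists of only finitely many balls, at least one ball, say $B(b_{i_0}, \epsilon/2)$, must contain $f(x_m)$ for infinitely many $m \in M$; let $M'$ denote this infinite index set. Then for any $m, j \in M'$ the triangle inequality gives $\rho(f(x_m), f(x_j)) \le \rho(f(x_m), b_{i_0}) + \rho(b_{i_0}, f(x_j)) < \epsilon/2 + \epsilon/2 = \epsilon$. Since $\epsilon > 0$ was arbitrary, $(f(x_n))$ is cofinally Cauchy in $(Y, \rho)$, which is exactly the assertion that $f$ is CC-regular.

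I expect no serious obstacle; the only points demanding care are the handling of the strict diameter inequality (resolved by passing from $\delta$ to $\delta/2$) and the choice of radius $\epsilon/2$ so that the two triangle-inequality terms combine to exactly $\epsilon$. It is worth remarking that continuity of $f$ is never used in this argument — only the almost boundedness property — so in fact \emph{every} almost bounded function is already CC-regular, and the full strength of almost uniform continuity is more than the statement requires.
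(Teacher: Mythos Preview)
Your proof is correct and follows essentially the same route as the paper's: apply almost boundedness to trap the image of a small-diameter set in finitely many balls, use the cofinally Cauchy hypothesis to produce an infinite index set whose terms form such a small-diameter set, and then pigeonhole. You are in fact more careful than the paper on two minor points---working with radius $\epsilon/2$ so the triangle inequality gives pairwise distance strictly below $\epsilon$, and passing to $\delta/2$ so the diameter is genuinely strictly less than $\delta$---and your closing remark that continuity is never invoked (so almost boundedness alone implies CC-regularity) is exactly right and is established separately in the paper as Theorem~\ref{T1}.
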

\begin{proof}
	Let $(x_{n})$ be any \textit{cofinally cauchy sequence} in $X$ having no constant sequence. For every $\epsilon > $ 0 there exists a $\delta > $ 0 such that $\forall$ ${A}\subseteq{X}$ such that $d(A) < \delta,$ $\exists~ \left\{y_{1}, y_{2}, ...,y_{n}\right\} \subseteq {Y}$ such that $f(A) \subset \displaystyle\bigcup_{i=1}^{n}B(y_{i}, \epsilon)$.
	For this $\delta$, there exists infinite subset $\mathbb{N}^{'}$ of $\mathbb{N}$ such that $d(x_{n}, x_{m}) < \delta$ $\forall~{n,m} \in \mathbb{N}^{'}$. Let $A = \left\{x_{n}: n \in \mathbb{N}^{'} \right\}$.
	Thus $f(A) \subset \displaystyle\bigcup_{i=1}^{n}B(y_{i}, \epsilon)$ for some $y_{i}\in Y$.
	At least one of these $B(y_{i}, \epsilon)$ will contain elements of the type $f(x_{n})$ for infinite values of $n$. Hence the sequence $(f(x_{n}))$ is cofinally cauchy.
\end{proof}
\begin{corollary}
	If a metric space $(X,d)$ is an almost Atsuji space, then $(X,d)$ is cofinally complete.
\end{corollary}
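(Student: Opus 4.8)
The plan is to read the corollary off from the two preceding propositions together with Proposition \ref{P1}, interpreting ``almost Atsuji'' as ``AUC.'' Unwinding the definition, the hypothesis says exactly that every continuous $f\colon X\to\mathbb{R}$ is almost uniformly continuous, that is, continuous and almost bounded. First I would observe that for \emph{real-valued} functions ``almost bounded'' and ``uniformly locally bounded'' coincide: a subset of $\mathbb{R}$ is covered by finitely many $\epsilon$-balls precisely when it is bounded, a condition that does not involve $\epsilon$, so almost boundedness collapses to the existence of a single $\delta>0$ with $f(A)$ bounded whenever $\mathrm{diam}(A)<\delta$, which is uniform local boundedness. Hence the AUC hypothesis is precisely that every continuous real-valued function on $X$ is uniformly locally bounded.

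With this in hand the corollary is the sufficiency half of Proposition \ref{P1}: if every continuous real-valued function on $X$ is uniformly locally bounded, then $X$ is cofinally complete. Equivalently, by the proposition immediately above, almost uniform continuity yields CC-regularity, so every continuous real-valued function is CC-regular, and one could instead invoke the CC-regular characterization of cofinal completeness recalled earlier. Either way the corollary is a one-line consequence once the relevant characterization is available for real-valued test functions.

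The genuine obstacle is exactly that point: Proposition \ref{P1} (and likewise the CC-regular characterization) is phrased for continuous maps into an \emph{arbitrary} target space, whereas the AUC hypothesis only controls real-valued maps, so I cannot quote the sufficiency direction verbatim and must check it survives the restriction to $\mathbb{R}$. I would verify this by contraposition. Suppose $X$ is not cofinally complete and fix a cofinally Cauchy sequence $(x_n)$ with no cluster point. A value attained infinitely often would be a cluster point, so every value occurs only finitely often; hence $\{x_n\}$ has no limit point and is a closed discrete subset of $X$. Enumerate its (infinitely many, distinct) points as $z_1,z_2,\dots$, set $g(z_k)=k$, and extend to a continuous $g\colon X\to\mathbb{R}$ by the Tietze theorem, which applies since metric spaces are normal. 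The cofinally Cauchy property now does the work: for each $\delta>0$ there is an infinite index set $N_\delta$ whose terms lie pairwise within $\delta$, hence inside one ball $B(x_{n_0},\delta)$; since each point is hit by only finitely many indices, these terms meet infinitely many distinct $z_k$, so $g$ assumes arbitrarily large values on $B(x_{n_0},\delta)$. Thus $g$ is continuous and real-valued but not uniformly locally bounded, which establishes the contrapositive and shows the sufficiency direction of Proposition \ref{P1} holds for real-valued test functions; combined with the first paragraph this proves the corollary.

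I would finally remark that the forward half of Proposition \ref{P1}, applied to real-valued functions, gives the converse as well: a cofinally complete space has every continuous real-valued function uniformly locally bounded, hence almost bounded, hence almost uniformly continuous, so it is AUC. The two directions together in fact identify AUC spaces with cofinally complete spaces, of which the stated corollary is one implication.
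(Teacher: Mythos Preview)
Your argument is correct. The paper's proof is the one-liner you mention as an alternative: the preceding proposition gives almost uniformly continuous $\Rightarrow$ CC-regular, and then the characterization ``cofinally complete $\iff$ every continuous function is CC-regular'' from \cite{[AK2]} is invoked directly. Your primary route is slightly different---you collapse almost boundedness to uniform local boundedness for real-valued maps and appeal to Proposition~\ref{P1} instead---but the two characterizations are interchangeable here. The genuine addition in your write-up is that you notice, and close, a point the paper glosses over: both Proposition~\ref{P1} and the CC-regular characterization are stated for maps into \emph{arbitrary} targets, whereas the AUC hypothesis speaks only of real-valued maps. Your Tietze construction verifies that real-valued test functions already suffice for the sufficiency direction, which makes the deduction rigorous; the paper simply takes this for granted.
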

\begin{proof}
	Since a metric space $(X,d)$ is cofinally complete if and only if every continuous function is \textit{CC-regular}, the statement is true.
\end{proof}

\noindent	Our next result says that the converse of the above corollary is also true.
\begin{proposition}
	Every \textit{cofinally complete} metric space $(X,d)$ is Almost Atsuji.
\end{proposition}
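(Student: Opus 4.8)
The plan is to read off almost boundedness of an arbitrary continuous real-valued function directly from Beer's characterization recorded in Proposition~\ref{P1}. So let $f:(X,d)\to\mathbb{R}$ be continuous. Since $(X,d)$ is cofinally complete, Proposition~\ref{P1} guarantees that $f$ is uniformly locally bounded; that is, there is a single $\delta_0>0$ such that $f(B_d(x,\delta_0))$ is a bounded subset of $\mathbb{R}$ for every $x\in X$.

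Next I would isolate the elementary observation that makes the two notions line up: any nonempty $A\subseteq X$ with $\operatorname{diam}(A)<\delta_0$ is contained in the ball $B_d(a,\delta_0)$ centered at any chosen point $a\in A$, because $d(a,y)\le\operatorname{diam}(A)<\delta_0$ for every $y\in A$. Consequently $f(A)\subseteq f(B_d(a,\delta_0))$, and the latter is bounded by the previous step.

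Given $\epsilon>0$, I would then simply take $\delta=\delta_0$; note that the same $\delta_0$ serves every prescribed $\epsilon$. For each $A$ with $\operatorname{diam}(A)<\delta$ the image $f(A)$ is bounded, and a bounded subset of $\mathbb{R}$ is totally bounded, so it can be covered by finitely many open balls $B(b_1,\epsilon),\dots,B(b_n,\epsilon)$ of $\mathbb{R}$. This is precisely the defining condition of almost boundedness, and since $f$ is continuous it is therefore almost uniformly continuous. As $f$ was an arbitrary continuous real-valued function, $(X,d)$ is AUC (almost Atsuji), which is the converse to the preceding corollary.

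I do not expect a genuinely hard step here: the entire argument rests on Proposition~\ref{P1}. The only points that require a moment's care are the passage from ``small diameter'' to ``contained in a single $\delta_0$-ball'', and the fact that for subsets of $\mathbb{R}$ boundedness and total boundedness coincide. It is exactly this coincidence that allows one radius coming from uniform local boundedness to work simultaneously for every $\epsilon>0$.
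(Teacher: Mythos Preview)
Your proof is correct and follows essentially the same route as the paper: invoke Proposition~\ref{P1} to obtain uniform local boundedness, then use that bounded subsets of $\mathbb{R}$ are totally bounded to produce the finite $\epsilon$-cover. Your version is in fact a bit more careful, since you make explicit the step that a set of diameter $<\delta_0$ sits inside a single $\delta_0$-ball, which the paper leaves implicit.
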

\begin{proof}Let $f: X \rightarrow \mathbb{R}$ be any continuous function. Since $X$ is cofinally complete, $f$ is uniformly locally bounded. Thus, $\exists~\delta > 0$ such that $f(B(x, \delta))$ is bounded $\forall x \in X $. Let $\epsilon > 0$ and let $x \in X$, since $f(B(x,\delta))$ is bounded, therefore $\exists~M > $ 0 such that $\lvert f(y) \rvert <  M ~ \forall y \in B(x, \delta)$.
	Since every bounded subset of $\mathbb{R}$ is totally bounded, we can find $y_{1}, y_{2},...y_{n} \in \mathbb{R}$ such that $f(B(x,\delta)) \subset \displaystyle\bigcup_{i=1}^{n}B(y_{i}, \epsilon)$. Thus $f$ is almost uniformly continuous.
\end{proof}
\noindent Thus we conclude that the class of AUC spaces and the class of cofinally complete metric spaces is same.
\noindent We have seen that every almost uniformly continuous function is CC-regular, our next results gives relationship between CC-regular functions and almost bounded functions.
\begin{theorem}\label{T1}
	Let $f$ be a function between two metric spaces $(X, d)$ and $(Y, \rho)$, $f$ is CC-regular if and only if $f$ is almost bounded.
\end{theorem}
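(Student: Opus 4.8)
The plan is to prove the two implications separately, disposing of the easier direction quickly and reserving the real work for the contrapositive of the other. For the implication \emph{almost bounded} $\Rightarrow$ \emph{CC-regular} I would essentially reuse the argument already recorded in the proof that almost uniformly continuous functions are CC-regular, noting that that argument never used continuity, only almost boundedness. Concretely, given a cofinally Cauchy sequence $(x_n)$ in $X$ and $\epsilon>0$, almost boundedness (applied to $\epsilon/2$) yields $\delta>0$ such that every set of diameter $<\delta$ is mapped into a finite union $\bigcup_{i=1}^{m}B(b_i,\epsilon/2)$. Applying cofinal Cauchyness to this $\delta$ produces an infinite $\mathbb{N}'\subseteq\mathbb{N}$ with $d(x_n,x_j)<\delta$ for $n,j\in\mathbb{N}'$, so $f(\{x_n:n\in\mathbb{N}'\})$ lands in finitely many $(\epsilon/2)$-balls. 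A pigeonhole step then gives an infinite subset of $\mathbb{N}'$ on which the images pairwise lie within $\epsilon$; since $\epsilon$ is arbitrary, $(f(x_n))$ is cofinally Cauchy.

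For the implication \emph{CC-regular} $\Rightarrow$ \emph{almost bounded} I would argue by contrapositive. If $f$ is not almost bounded, then there is a fixed $\epsilon_0>0$ so that for every $n$ there exists $A_n\subseteq X$ with $\operatorname{diam}(A_n)<1/n$ whose image $f(A_n)$ cannot be covered by finitely many balls of radius $\epsilon_0$. The goal is to manufacture a cofinally Cauchy sequence in $X$ whose image fails to be cofinally Cauchy. I would build the sequence $(x_i)$ one term at a time along a schedule $n(1),n(2),\dots$ of block indices in which every value of $n$ occurs infinitely often (e.g. $1,1,2,1,2,3,\dots$), maintaining the invariant that each new image $f(x_i)$ is at distance $\geq\epsilon_0$ from all previously selected images $f(x_1),\dots,f(x_{i-1})$. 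The invariant can always be kept: at step $i$ the $i-1$ balls $B(f(x_j),\epsilon_0)$ cannot cover $f(A_{n(i)})$ (precisely because $f(A_{n(i)})$ is not finitely $\epsilon_0$-coverable), so some $a\in A_{n(i)}$ has $f(a)$ outside all of them, and we set $x_i=a$.

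Two checks then finish the argument. First, $(x_i)$ is cofinally Cauchy: given $\epsilon>0$, pick $N$ with $1/N<\epsilon$; the infinitely many indices $i$ with $n(i)=N$ contribute terms lying in $A_N$, a set of diameter $<\epsilon$, so they form the required infinite $\epsilon$-clustered index set. Second, $(f(x_i))$ is not cofinally Cauchy, because by construction all its terms are pairwise at distance $\geq\epsilon_0$, whence no infinite index set can have all pairwise image distances below $\epsilon_0$. Thus $f$ is not CC-regular, completing the contrapositive.

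The delicate point — and the step I expect to be the main obstacle — is controlling the images \emph{across} different blocks $A_n$: each block must be allowed to supply infinitely many terms in order to secure cofinal Cauchyness of $(x_i)$, yet a priori the images drawn from distinct blocks could cluster together and render $(f(x_i))$ cofinally Cauchy after all (within-block $\epsilon_0$-separation alone does not preclude an infinite transversal with small image-diameter). The one-point-at-a-time greedy selection sidesteps this neatly: since at each finite stage one need only dodge finitely many $\epsilon_0$-balls, non-coverability of each $f(A_n)$ guarantees room to maintain \emph{global} $\epsilon_0$-separation of the images rather than mere separation within a single block. I would verify only that non-coverability by finitely many $\epsilon_0$-balls indeed yields, at each stage, a point whose image avoids the finite forbidden union — which is immediate from the definition — and that the chosen schedule visits every block infinitely often.
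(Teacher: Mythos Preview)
Your proof is correct and follows the same overall contrapositive strategy as the paper: assume $f$ is not almost bounded, fix a bad $\epsilon_0$, obtain for each $n$ a small-diameter set whose image is not finitely $\epsilon_0$-coverable, and weave terms from these sets into a single cofinally Cauchy sequence whose image is uniformly discrete. The difference lies in the selection mechanism. The paper first extracts, for each $n$, an $\epsilon_0$-discrete sequence inside $f(B(x_n,1/n))$, and then interleaves these sequences in a triangular pattern, using a pigeonhole argument (two points of an $\epsilon_0$-discrete set cannot fall within $\epsilon_0/2$ of the same point of a finite set) to maintain global $\epsilon_0/2$-discreteness of the merged image sequence. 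Your greedy one-point-at-a-time selection bypasses both the preliminary within-block discretization and the pigeonhole step: since at stage $i$ you need only avoid $i-1$ balls of radius $\epsilon_0$, non-coverability of $f(A_{n(i)})$ directly supplies a point with image $\epsilon_0$-far from all previous images. This is a bit cleaner and yields $\epsilon_0$-discreteness of the image rather than $\epsilon_0/2$, though the gain is cosmetic. For the converse direction, you rightly observe that the paper's earlier proof that almost uniformly continuous functions are CC-regular never invokes continuity and therefore already establishes \emph{almost bounded} $\Rightarrow$ \emph{CC-regular}; the paper's proof of the theorem itself is accordingly one-sided.
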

\begin{proof}
	Let $f\left(X,d\right) \rightarrow \left(Y,\rho\right)$ be a \textit{CC-regular} function.
	Suppose $f$ is not almost bounded. Therefore $\exists \epsilon > 0$ such that $\forall n \in \mathbb{N}$, $\exists x_n \in X$ such that $f(B(x_n, \frac{1}{n}))$ can not be written as finite union of open balls of radius $\epsilon$ in $Y$. Hence there exits a sequence $\left(f\left(x_n^m\right)\right)_{m\in\mathbb{N}}$ in $f \left(B\left(x_n, \frac{1}{n}\right)\right)$ such that  $\rho\left(f\left(x_n^m\right),f\left(x_n^t\right)\right) \geq \epsilon~\forall ~m,~t \in \mathbb{N}$. According to the hypothesis the sequence $\left(x_n^m\right)_{m\in\mathbb{N}}$ is not cofinally Cauchy for all $ n \in \mathbb{N}$. Now for a fixed n, the balls $B\left(f\left(x_n^m\right),\epsilon \right)$ are disjoint $\forall m$. Let $A_n$ = $\left\{ f(x^m_n) : m \in \mathbb{N} \right\}$. Let $F_1=\left\{f(x^1_1)\right\}$. Clearly, we can choose $f(x^2_2)$ (rename if necessary) $\in A_2$ such that $\rho(f(x^1_1),f(x^2_2)) \geq \frac{\epsilon}{2}$. Now, choose $f(x^2_1) \in A_1\backslash \left\{f(x^1_1),f(x^2_2)\right\}$ such that $F_2=\left\{f(x^1_1),f(x^2_2),f(x^2_1)\right\}$ is $\frac{\epsilon}{2}$ discrete. Suppose finite subset $F_n$ of $\displaystyle\bigcup_{i=1}^{n} A_i$ is chosen satisfying: 1) $F_n$ is $\frac{\epsilon}{2}$ discrete, 2) $|F_n \cap A_i|$ = $n-i+1~\forall~1\leq i\leq n$. Now choose $f(x^{n+1}_{n+1})$ $\in A_{n+1}$ such that $F_n \cup \left\{f(x^{n+1}_{n+1})\right\}$ is $\frac{\epsilon}{2}$ discrete. Suppose it is not possible, then $\rho(y, F_n)\leq \frac{\epsilon}{2} ~\forall~ y \in A_{n+1}.$ Therefore, there exists $y, ~y' \in A_{n+1}\backslash F_n,~ y\neq y'$ such that $\rho(y,z) < \frac{\epsilon}{2}$ and $\rho(y',z) < \frac{\epsilon}{2}$ for some $z \in F_n$ which implies $\rho(y,y') < \epsilon$, which is a contradiction. Thus, $F_{n+1} = F_n \bigcup \left\{f(x^{n+1}_1), f(x^{n+1}_2),...f(x^{n+1}_{n_+1})\right\}$ is $\frac{\epsilon}{2}$ discrete. Now if we take the sequence in the order we chose the elements, the sequence is $\frac{\epsilon}{2}$ discrete but its pre-image is cofinally Cauchy as it consists of infinite elements from each ball. We get contradiction. Thus f is almost bounded.
\end{proof}
\begin{remark}
	It is clear that every almost bounded function need not be bounded. Note that every bounded function need not be almost bounded. For example, let $ X = \left\{ \frac{1}{n} : n \in  \mathbb{N} \right\}$ with the usual metric and let $ Y = \mathbb{N}$. Define a metric $d$ on $Y$ s.t $d(x,y) = 1$ if $ x \neq y$ o.w. 0. Let
	$ f: X \rightarrow Y$ be defined as:
	$$ f\left(\frac{1}{n}\right) = n\quad \forall~n\in \mathbb{N}$$
\end{remark}
\noindent In the literature one can find another important generalization of Cauchy sequences (other than cofinally Cauchy sequences), known as pseudo-Cauchy sequences. These sequences play an important role in characterizing Atsuji spaces (discovered by Toader \cite{[T]}): a metric space is Atsuji if and only if each pseudo-Cauchy sequence of distinct points in the space clusters. In \cite{[AK1]}, authors defined those functions which preserve pseudo-Cauchy sequences (called PC-regular functions) and characterized Atsuji space as: a metric space is Atsuji if and only if each continous function defined on it is PC-regular. Our next result gives equivalent characterizations of totally bounded metric spaces using almost bounded and PC-regular functions. Before that we need to recall the following definitions.
\begin{definition}
	A sequence $(x_n)$ in a metric space $(X,d)$ is said to be pseudo-Cauchy if $\forall~\epsilon > 0$ and $n \in \N$, $\exists~k,j \in \N$ such that $k, j > n$, $k\neq j$ and $d(x_k,x_j) < \epsilon$.
\end{definition}
\begin{definition}
	A function $f: (X,d) \rightarrow (Y,\rho)$ between two metric spaces is said to be pseudo-Cauchy regular (or PC-regular for short) if ($f(x_{n}))$ is pseudo-Cauchy in $(Y,\rho)$ for every pseudo-Cauchy $(x_{n})$ in $(X,d)$.
\end{definition}
\begin{theorem}
	Let $\left(Y, \rho\right)$ be a metric space. The following statements are equivalent.
	\begin{enumerate}[label=(\alph*)]
		\item $\left(Y, \rho\right)$ is totally bounded.
		\item $~\exists$ a metric space $\left(X, d\right)$ with non-empty set of limit points, such that every function $f: \left(X,d\right) \rightarrow \left(Y,\rho \right)$ is PC-regular.
		\item $~\exists$ a metric space $\left(X, d\right)$ with non-empty set of limit points, such that every function $f: \left(X,d\right) \rightarrow \left(Y,\rho \right)$ is almost bounded.
	\end{enumerate}
\end{theorem}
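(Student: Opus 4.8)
The plan is to establish the two easy implications $(a)\Rightarrow(b)$ and $(a)\Rightarrow(c)$ directly, and to obtain the converses $(b)\Rightarrow(a)$ and $(c)\Rightarrow(a)$ by contraposition through a single construction, so that the three statements become equivalent. For $(a)\Rightarrow(b)$ I would first record the following reformulation of total boundedness: if $(Y,\rho)$ is totally bounded then \emph{every} sequence in $Y$ is pseudo-Cauchy. Indeed, given $\epsilon>0$ and $n\in\N$, cover $Y$ by finitely many balls of radius $\epsilon/2$; by the pigeonhole principle one of them contains $y_k$ for infinitely many indices $k>n$, and any two such terms lie within $\epsilon$ of each other. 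Hence for \emph{any} domain $X$ the image of a pseudo-Cauchy sequence under $f\colon X\to Y$ is itself a sequence in $Y$, and so is automatically pseudo-Cauchy; thus every $f$ is PC-regular. For $(a)\Rightarrow(c)$ the point is even more direct: total boundedness makes $Y$ a finite union of balls of radius $\epsilon$ for each $\epsilon>0$, so $f(A)\subseteq Y$ lies in finitely many $\epsilon$-balls for every $A\subseteq X$ and any $\delta>0$ witnesses almost boundedness. To complete these two implications it suffices to exhibit one eligible domain, and I would take $X=\{0\}\cup\{1/n:n\in\N\}$ with the usual metric, whose set of limit points is the nonempty set $\{0\}$.

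For the converses I would argue contrapositively and handle both at once. Suppose $(Y,\rho)$ is not totally bounded; then there are $\epsilon_0>0$ and an infinite $\epsilon_0$-separated sequence $(y_n)$ in $Y$, obtained by the greedy choice $y_{n+1}\notin\bigcup_{i\le n}B(y_i,\epsilon_0)$. Let $(X,d)$ be an \emph{arbitrary} metric space with a limit point $p$. Since in a metric space every ball about an accumulation point meets $X\setminus\{p\}$ in infinitely many points, I can select \emph{distinct} points $x_n$ with $d(x_n,p)<1/n$; then $(x_n)$ converges to $p$, is Cauchy, and in particular pseudo-Cauchy. Define $f\colon X\to Y$ by $f(x_n)=y_n$, which is well defined because the $x_n$ are distinct, and $f\equiv y_1$ off $\{x_n:n\in\N\}$. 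The image $(f(x_n))=(y_n)$ is $\epsilon_0$-separated, hence not pseudo-Cauchy, so $f$ is not PC-regular; this gives $\neg(a)\Rightarrow\neg(b)$. The same $f$ refutes almost boundedness: taking $\epsilon=\epsilon_0/2$, for any $\delta>0$ choose $N$ with $d(x_n,p)<\delta/4$ for all $n\ge N$, so that $A=\{x_n:n\ge N\}$ has diameter less than $\delta$ while $f(A)=\{y_n:n\ge N\}$ is infinite and $\epsilon_0$-separated, hence cannot be covered by finitely many balls of radius $\epsilon_0/2$ (each such ball contains at most one $y_n$). Thus $f$ is not almost bounded, giving $\neg(a)\Rightarrow\neg(c)$.

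I expect the only genuine subtlety to be the role of the limit-point hypothesis. It is exactly what guarantees that the arbitrary domain $X$ carries a pseudo-Cauchy sequence of \emph{distinct} points, which is needed both to define $f$ unambiguously and to supply a legitimately pseudo-Cauchy input whose image can be forced onto the separated set $(y_n)$. Everything else reduces to bookkeeping with the pigeonhole principle and the definitions of pseudo-Cauchy, PC-regular, and almost bounded.
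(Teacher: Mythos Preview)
Your proof is correct. The only structural difference from the paper is the route through the implications: the paper argues in a cycle $(a)\Rightarrow(b)\Rightarrow(c)\Rightarrow(a)$, using for $(b)\Rightarrow(c)$ the chain ``PC-regular $\Rightarrow$ CC-regular $\Rightarrow$ almost bounded'' established earlier (Theorem~\ref{T1}), whereas you prove $(a)\Leftrightarrow(b)$ and $(a)\Leftrightarrow(c)$ separately by handling the two contrapositives $\neg(a)\Rightarrow\neg(b)$ and $\neg(a)\Rightarrow\neg(c)$ with the same construction. The core construction---sending a convergent sequence of distinct points in $X$ onto an $\epsilon_0$-separated sequence in $Y$---is identical in both arguments. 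Your version is self-contained and does not appeal to the CC-regular/almost bounded equivalence, at the modest cost of writing out one extra (easy) implication; the paper's version is shorter because it reuses that earlier result.
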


\begin{proof}
	$(a)\Rightarrow (b):$ Since $Y$ is totally bounded, every sequence in $\left(Y, \rho\right)$ is pseudo Cauchy sequence. Hence every function $f:\left(X,d\right) \rightarrow \left(Y,\rho \right)$ is PC-regular.
	
	$(b)\Rightarrow (c):$ This follows from the fact that every PC-regular function is CC-regular and every CC-regular function is almost bounded.
	
	$(c)\Rightarrow (a):$ Let $ \exists  \left(X,d\right)$ s.t. every $f: \left(X,d\right) \rightarrow \left(Y,\rho \right)$ is almost bounded. Suppose $Y$ is not totally bounded, thus $\exists$ a sequence $\left(y_n\right)$ and $\epsilon > 0 $ s.t. $\rho\left(y_i, y_j\right) \geq \epsilon ~\forall~ i,j \in \mathbb{N}$. Since $X' \neq \phi$, therefore there exists a cauchy sequence $\left(x_n\right) in~X$ of distinct points. Define a function $f$ from $ X \rightarrow Y$ as 
	$$f(x)=\left\{\begin{array}{ll}y_n & \hbox{$:~if~x=x_n$ for some $n$}\\y_1 & \hbox{: otherwise}
	\end{array}\right.$$
	According to the hypothesis, the function is almost bounded, therefore $\exists~ \delta > 0$ such that $\forall~A \subseteq X$, if $d\left(A\right)< \delta$, then $f\left(A\right) \subset \displaystyle\bigcup_{i=1}^{n}B\left(z_{i}, \frac{\epsilon}{2}\right)$ for some $ z_1,z_2,...z_i \in Y$. Also, $\exists ~n_0 \in \mathbb{N}$ s.t. $\forall~n \geq n_0, ~d\left(x_n,x_m\right) < \delta~ \forall ~n,m \geq n_0$.\\
	Let  $A  = \left\{x_i : ~i\geq n_0 \right\}$. Since  $d\left(A\right)< \delta, ~\exists ~ z_1,z_2,...z_n \in Y$ s.t. $f\left(A\right) \subset \displaystyle\bigcup_{i=1}^{n}B\left(z_{i}, \frac{\epsilon}{2}\right)$, which is not possible because  $d\left(y_i,y_i\right) \geq\epsilon ~\forall~ i,j\in ~\mathbb{N}$. Thus $Y$ is totally bounded.
\end{proof}
\medskip

\section{Almost Unifomly Continuous Functions}
It is well known that continuous functions are characterized by convergent sequences, whereas uniformly continuous functions are characterized by asymptotic sequences. In \cite{[AK2]}, the authors generalized the notion of asymptotic sequences. Before proceeding futher, let us recall few definitions.
\begin{definitions}
	A pair of sequences $(x_n)$ and $(y_n)$ in a metric space $(X,d)$ is said to be:
	\begin{itemize}
		\item[$(a)$] \emph{asymptotic}\index{asymptotic sequences}, written $(x_n)\asymp(y_n)$, if $\forall$ $ \epsilon >0$, $\exists$ $ n_o \in \N$ such that $d(x_n,y_n)< \epsilon$ $\forall$ $ n > n_o$.
		\item[$(b)$] \emph{uniformly asymptotic}\index{uniformly asymptotic sequences}, written $(x_n)\asymp^u(y_n)$, if $\forall$ $ \epsilon >0$, $\exists$ $ n_o \in \N$ such that $d(x_m,y_n)< \epsilon$ $\forall$ $ m, ~n > n_o$.
		\item[$(c)$] \emph{cofinally asymptotic}\index{cofinally asymptotic sequences}, written $(x_n) \asymp_c (y_n)$, if $\forall$ $ \epsilon>0,$ $ \exists$ an infinite subset $N_\epsilon$ of $\mathbb{N}$ such that $d(x_n, y_n)< \epsilon$ $ \forall$ $ n \in N_\epsilon$.
		\item[$(d)$] \emph{cofinally uniformly asymptotic}\index{cofinally uniformly asymptotic \\sequences}, written $(x_n)\asymp^u_c (y_n)$, if $\forall$ $ \epsilon>0,$ $ \exists$ an infinite subset $N_\epsilon$ of $\mathbb{N}$ such that $d(x_n, y_m)< \epsilon$ $ \forall$ $ n, ~m \in N_\epsilon$.
	\end{itemize}
\end{definitions}
In \cite{[S]}, Snipes gave a nice characterization of Cauchy-continuous functions in terms of pairs of uniformly asymptotic sequences: $f: (X,d) \rightarrow (Y,\rho)$ between two metric spaces is Cauchy-continuous if and only if $(x_n) \asymp^u(z_n)$ in $(X,d)$ implies $(f(x_n))\asymp^u(f(z_n))$ in $(Y,\rho)$.
It is easy to see that if $f$ is continuous and satisfies the condition: $(x_n) ~\displaystyle{\asymp_c^{u}}~(z_n)$ implies $(f(x_n))~ \displaystyle{\asymp_c^{u}}~f((z_n))$, then f is almost uniformly continuous. But the converse is not true. 
\begin{example}
	Let $A = \left\{ \frac{1}{n} : n\in\mathbb{N}\right\}$ endowed with the usual distance metric and let $f: A\rightarrow\mathbb{R}$ be defined as:
	$$f(1/n)=\left\{\begin{array}{ll}1 & \hbox{:~$n$ is odd}\\2 & \hbox{:~$n$ is even}
	\end{array}\right.$$
	
	$f$ is almost uniformly continuous, but the conditions is not satisfied (take $x_n = \frac{1}{2n-1}$ and $z_n = \frac{1}{2n}).$
\end{example}
Note that the above function also shows that almost uniformly continuous functions on totally bounded spaces need not to be uniformly continuous unlike Cauchy continuous functions. Infact in \cite{[AK2]}, it is proved that every CC-regular( thus almost bounded) function on $(X,d)$ with values in $(Z,\rho)$ is uniformly continuous if and only if $(X,d)$ is uniformly discrete.

\noindent In \cite{[AK2]} the authors have given sequential characterization of CC-regular functions. Since we have proved that the class of almost bounded functions and the class of CC-regular functions is same, the following statements are equivalent:
\begin{enumerate}
	\item $f$ is almost bounded.
	\item If $(x_n)$ is cofinally Cauchy sequence in $(X,d)$ and $(f(x_n))\asymp(f(z_n))$, where $(z_n)$ is any sequence in $X$, then $(f(x_n)) ~\displaystyle{\asymp_c^{u}}~(f(z_n))$.
	\item If $(x_n) ~\displaystyle{\asymp_c^{u}}~(z_n)$ and $(f(x_n))\asymp(f(z_n))$, then $(f(x_n)) ~\displaystyle{\asymp_c^{u}}~(f(z_n))$.
\end{enumerate}
\begin{remark}
	Note that if we replace $(f(x_n))\asymp(f(z_n))$ by $(f(x_n))\asymp_c(f(z_n))$ in (b), still the condition will imply the function to be \textit{CC-regular} but the converse is not true. For example, define identity function on $X = \left\{ \frac{1}{n}, n\in \mathbb{N}\right\}$ endowed with the usual distance metric and let $(x_n)$ be the sequence $1,\frac{1}{2},2,\frac{1}{3},3,\frac{1}{4},4,...$ and $(z_n)$ be the sequence $1,10,2,10,3,10,4,...,$ here the function is almost uinformly continuous and $(x_n)$ is \textit{CC-regular} such that $(f(x_n))\asymp_c(f(z_n))$ but $(f(x_n)) ~\displaystyle{\not\asymp_c^{u}}~(f(z_n)).$
\end{remark}
Now we want to characterize almost uniformly continuous functions using sequences. For this purpose, we state the following sequential criteria of continuous functions using the generalization of asymptotic sequences.
\begin{proposition}
	Let $f: (X,d) \rightarrow (Y,\rho)$ be a function. The following are equivalent:
	
	a) $f$ is continouous.
	
	b) Whenever $(x_n)\asymp_c(x)$ then $(f(x_n))~{\asymp}_c~(f(x))$.
	
	c) Whenever $(x_n)\asymp(x)$ then $(f(x_n))~{\asymp}_c~(f(x))$
\end{proposition}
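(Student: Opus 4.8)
The plan is to first reinterpret the two hypotheses in the familiar language of cluster points and convergence, and then to prove the cycle $(a)\Rightarrow(b)\Rightarrow(c)\Rightarrow(a)$. The key preliminary observation is that, for the constant sequence $(x)$, the relation $(x_n)\asymp_c(x)$ says precisely that $x$ is a cluster point of $(x_n)$ (equivalently, some subsequence of $(x_n)$ converges to $x$), whereas $(x_n)\asymp(x)$ says precisely that $x_n\to x$. Indeed, if $(x_n)\asymp_c(x)$ then for each $k$ the set $\{n:d(x_n,x)<1/k\}$ is infinite, and a successive selection of indices $n_1<n_2<\cdots$ produces a subsequence converging to $x$; conversely, any convergent subsequence immediately supplies the required infinite index sets $N_\epsilon$.

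For $(a)\Rightarrow(b)$ I would assume $f$ continuous and $(x_n)\asymp_c(x)$, extract a subsequence $(x_{n_k})$ with $x_{n_k}\to x$, and invoke continuity to get $f(x_{n_k})\to f(x)$. Then for each $\epsilon>0$ the set $\{n_k:\rho(f(x_{n_k}),f(x))<\epsilon\}$ is cofinite in $\{n_k\}$, hence an infinite subset of $\mathbb{N}$ witnessing $(f(x_n))\asymp_c(f(x))$. The implication $(b)\Rightarrow(c)$ is then immediate: $(x_n)\asymp(x)$, i.e. $x_n\to x$, is strictly stronger than $(x_n)\asymp_c(x)$, since convergence of the whole sequence supplies cofinite, hence infinite, index sets for every $\epsilon$; thus $(b)$ applies verbatim to yield $(f(x_n))\asymp_c(f(x))$.

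The substantive step is $(c)\Rightarrow(a)$, which I would argue by contraposition. If $f$ fails to be continuous at some point $x$, then there is an $\epsilon_0>0$ such that, choosing the radius $1/n$, one obtains a sequence $x_n\to x$ with $\rho(f(x_n),f(x))\ge\epsilon_0$ for every $n$. Since $x_n\to x$ gives $(x_n)\asymp(x)$, hypothesis $(c)$ would force $(f(x_n))\asymp_c(f(x))$, that is, $f(x)$ would be a cluster point of $(f(x_n))$; but the uniform lower bound renders the index set $\{n:\rho(f(x_n),f(x))<\epsilon_0\}$ empty, contradicting the existence of an infinite $N_{\epsilon_0}$. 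Hence $f$ is continuous.

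The only delicate point throughout is careful bookkeeping of the quantifiers in the definition of $\asymp_c$ — namely that a single cofinal index set per $\epsilon$ suffices and that these sets need not be nested across different values of $\epsilon$ — but once the cluster-point reformulation of the first paragraph is in place, each implication reduces either to a routine subsequence extraction or to the single contradiction above, so I do not expect any serious obstacle beyond this reformulation.
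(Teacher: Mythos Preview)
Your proof is correct and follows essentially the same route as the paper's: the same subsequence extraction for $(a)\Rightarrow(b)$, the same trivial implication $(b)\Rightarrow(c)$, and the same contraposition for $(c)\Rightarrow(a)$. Your explicit reformulation of $(x_n)\asymp_c(x)$ and $(x_n)\asymp(x)$ as ``$x$ is a cluster point'' and ``$x_n\to x$'' is a helpful clarification that the paper leaves implicit, but otherwise the arguments coincide.
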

\begin{proof}
	(a)$\implies$(b) Since $(x_n)\asymp_c(x),$ there exists a subsequence of $(x_n)$ which converges to $x$. Using the continuity of $f$, $(f(x_n))\asymp_c(f(x)).$
	
	(b)$\implies$(c) Immediate.
	
	(c)$\implies$(a) Suppose $f$ is not continuous at $x$. Thus, $\exists~\epsilon > 0$ such that $\forall~ n\in\mathbb{N}, \exists~x_n$ such that $d(x_n,x) < \frac{1}{n}$ but $\rho(f(x_n),f(x)) \geq \epsilon$ which is a contradiction.
\end{proof}
Using the above result and the sequential criteria of CC-regular functions, we can prove the following result.
\begin{proposition}
	Let $f: (X,d) \rightarrow (Y,\rho)$ be a function between two metric spaces. The following are equivalent:
	\begin{enumerate}[label=(\alph*)]
		\item  $f$ is almost uniformly continuous.
		\item If $(x_n)$ is cofinally and $(f(x_n))\asymp(f(z_n))$ then $(f(x_n)) ~\displaystyle{\asymp_c^{u}}~(f(z_n))$. Also, if $(y_n)~{\asymp}_c~(y)$ then $(f(y_n))\asymp_c(f(y)).$
		\item If $(x_n) ~\displaystyle{\asymp_c^{u}}~(z_n)$ and $(f(x_n))\asymp(f(z_n))$, then $(f(x_n)) ~\displaystyle{\asymp_c^{u}}~(f(z_n))$. Also, if $(y_n)~\asymp~(y)$ then $(f(y_n))\asymp_c(f(y)).$
	\end{enumerate}
\end{proposition}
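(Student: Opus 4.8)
The plan is to observe that almost uniform continuity is, by its very definition, the conjunction of two logically independent properties --- continuity and almost boundedness --- and that each of the compound conditions (b) and (c) likewise splits into two clauses, one encoding almost boundedness and one encoding continuity. I would therefore prove the proposition not by constructing any new sequences, but by matching these clauses against the two sequential characterizations already in hand: the list of equivalent conditions $(1)$--$(3)$ for $f$ to be almost bounded (valid because Theorem \ref{T1} identifies almost boundedness with CC-regularity), and the sequential characterization of continuity furnished by the preceding proposition.

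First I would dispatch the almost-boundedness clauses. The opening sentence of (b), asserting that whenever $(x_n)$ is cofinally Cauchy and $(f(x_n))\asymp(f(z_n))$ then $(f(x_n))\asymp_c^u(f(z_n))$, is verbatim statement $(2)$ of the list, while the opening sentence of (c) is verbatim statement $(3)$. Since $(1)$, $(2)$, $(3)$ are mutually equivalent, each of these first clauses is equivalent to the single assertion that $f$ is almost bounded. Next I would dispatch the continuity clauses. The second sentence of (b), that $(y_n)\asymp_c(y)$ implies $(f(y_n))\asymp_c(f(y))$, is precisely condition (b) of the preceding proposition and hence equivalent to continuity of $f$; the second sentence of (c) is precisely condition (c) there, again equivalent to continuity. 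Thus condition (b) holds if and only if $f$ is simultaneously almost bounded and continuous, and likewise for condition (c).

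Combining, condition (b) is equivalent to ``$f$ almost bounded $\wedge$ $f$ continuous,'' which is exactly the definition of almost uniform continuity, i.e. (a); the identical argument gives (a) $\Leftrightarrow$ (c). Because every link in this chain is a biconditional, the three statements are equivalent and the proof is finished.

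The only point demanding genuine care is confirming that the two clauses truly decouple: the almost-boundedness characterization $(1)$--$(3)$ must hold with no continuity hypothesis, and the continuity characterization must hold with no boundedness hypothesis, so that the conjunction of the two clauses is legitimately the conjunction of the two underlying properties. Since both prior results are recorded as standalone biconditionals, this decoupling is automatic, and no estimates remain; the residual bookkeeping is merely to verify that the sequence types align correctly --- cofinally Cauchy sequences paired via $\asymp$ and $\asymp_c^u$ for the boundedness clause, and $\asymp_c$- or $\asymp$-convergent sequences for the continuity clause.
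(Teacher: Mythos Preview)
Your proposal is correct and follows exactly the route the paper indicates: the paper gives no detailed proof, writing only ``Using the above result and the sequential criteria of CC-regular functions, we can prove the following result,'' and your argument is precisely the intended unpacking of that sentence. You have correctly matched the first clause of each of (b) and (c) to items (2) and (3) of the almost-boundedness list, and the second clause of each to conditions (b) and (c) of the continuity proposition, then taken conjunctions.
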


\noindent	We have found that every continuous function on a metric space is almost bounded if and only if the metric space is cofinally complete. Our next result characterizes those metric spaces $(X, d)$ such that every almost bounded function from $(X, d)$ to any metric space $(Y, \rho)$ is continuous.
\begin{theorem}
	Let $(X, d)$ be a metric space. Every almost bounded function from $(X,d)$ to any metric space $(Y, \rho$) is continuous if and only if $(X,d)$ is a discrete space.
\end{theorem}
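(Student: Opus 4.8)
The plan is to prove both implications, reading ``discrete'' in the topological sense, i.e.\ every point of $X$ is isolated. The forward implication is then essentially free: if $(X,d)$ is discrete, every subset of $X$ is open, so \emph{every} function out of $X$ is continuous, and in particular every almost bounded one is. Hence all the content lies in the converse.

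For the converse I would argue by contraposition and exhibit, whenever $X$ fails to be discrete, a single almost bounded function that is not continuous. The key observation driving the construction is that any function $f\colon (X,d)\to(Y,\rho)$ whose image $f(X)$ is totally bounded is automatically almost bounded: given $\epsilon>0$, total boundedness yields finitely many balls $B(b_1,\epsilon),\dots,B(b_n,\epsilon)$ covering $f(X)$, and then for \emph{any} $A\subseteq X$ (hence for any prescribed $\delta>0$, say $\delta=1$) we have $f(A)\subseteq f(X)\subseteq\bigcup_{i=1}^{n}B(b_i,\epsilon)$. Thus the diameter condition becomes vacuous and $f$ is almost bounded.

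With this in hand the construction is immediate. If $X$ is not discrete, pick a non-isolated point $x_0$, so there is a sequence $(x_n)$ in $X\setminus\{x_0\}$ with $x_n\to x_0$. Define $f\colon X\to\R$ by $f(x_0)=1$ and $f(x)=0$ for $x\neq x_0$. Then $f(X)\subseteq\{0,1\}$ is finite, hence totally bounded, so by the observation $f$ is almost bounded; but $f(x_n)=0\not\to 1=f(x_0)$, so $f$ is discontinuous at $x_0$. This contradicts the hypothesis that every almost bounded function is continuous, forcing $X$ to be discrete.

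The proof has no serious obstacle; the one point to get right is the total-boundedness shortcut above, which collapses the $\epsilon$--$\delta$ quantifier structure of almost boundedness as soon as the range of $f$ is small. I would also flag at the outset the intended meaning of ``discrete'' (topological, not the uniform discreteness used earlier for the uniform-continuity analogue), since that choice is precisely what makes the equivalence come out correctly.
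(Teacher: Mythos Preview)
Your proof is correct and follows essentially the same approach as the paper: both argue by contraposition, pick a non-isolated point with a sequence converging to it, and build a two-valued (hence trivially almost bounded) function that fails to be continuous there. The only cosmetic difference is that the paper sets $f=1$ on the sequence and $0$ elsewhere, while you set $f=1$ at the limit point and $0$ elsewhere; your explicit remark that a totally bounded image forces almost boundedness is a nice justification that the paper leaves implicit.
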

\begin{proof}
	If $(X,d)$ is a discrete space then every function on it will be continuous. Conversely, Suppose $X$ is not discrete then $\exists~x_n\in X$ such that all are distinct and $x\in X$ such that $x_n$ converges to $x $ (take $x_n\neq x~\forall~n$). Define $f$ such that
	
	$$f(y)=\left\{\begin{array}{ll}1 & \hbox{$:~if~y=x_n$ for some $n$}\\0 & \hbox{: otherwise}
	\end{array}\right.$$
	
	$f$ is almost bounded but not continuous.
\end{proof}

\noindent Our next result states that a real valued almost uniformly continuous function having bounded range can be extended from a closed subset of a metric space to the whole metric space.
\begin{theorem} (Extention Theorem): Let $(X,d)$ be a metric space. If $f: C\rightarrow [a,b]$ where $C$ is a closed subset of $X$ and $a,b\in \mathbb{R}$ is almost uniformly continuous then f can be extended to $F:X\rightarrow[a,b]$, where $F$ is almost uniformly continuous and $F|_C = f$. The statement is true even if we replace $[a,b]$ by $(a,b)$.
\end{theorem}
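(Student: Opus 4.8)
The plan is to observe that, because the target is a bounded interval, the almost-bounded half of ``almost uniformly continuous'' is automatic, so the theorem collapses to the classical Tietze Extension Theorem.

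First I would record that $[a,b]$, as a bounded subset of $\mathbb{R}$, is totally bounded. Hence for each $\epsilon>0$ the whole interval is covered by finitely many open balls of radius $\epsilon$, so $F(A)$ lies in such a finite union for \emph{every} $A\subseteq X$, regardless of its diameter. Thus \emph{any} map $F\colon(X,d)\to[a,b]$ is almost bounded; this is exactly the phenomenon behind the implication (a)$\Rightarrow$(c) in the earlier characterisation of totally bounded spaces, specialised to $Y=[a,b]$. Consequently, for maps into $[a,b]$ (and, by the same reasoning, into the totally bounded set $(a,b)$) being almost uniformly continuous is equivalent to being merely continuous.

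With this reduction in hand I would invoke the normality of metric spaces. Since $C$ is closed and $f\colon C\to[a,b]$ is continuous, the Tietze Extension Theorem produces a continuous $F\colon X\to[a,b]$ with $F|_C=f$; by the previous step $F$ is automatically almost bounded, hence almost uniformly continuous, which disposes of the closed-interval case. For the open interval $(a,b)$ I would first extend to a continuous $F_0\colon X\to[a,b]$ as above and then push the extension away from the endpoints. Put $D=F_0^{-1}(\{a,b\})$, a closed set disjoint from $C$ because $f(C)\subseteq(a,b)$; Urysohn's Lemma supplies a continuous $\varphi\colon X\to[0,1]$ with $\varphi\equiv 1$ on $C$ and $\varphi\equiv 0$ on $D$. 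Writing $c=\tfrac{a+b}{2}$ and $F(x)=\varphi(x)F_0(x)+(1-\varphi(x))c$, one checks $F|_C=f$, that $F$ is continuous, and that $F(x)\in(a,b)$ for all $x$ (at points of $D$ we get $F(x)=c$, and elsewhere $F(x)$ is a convex combination of points of $(a,b)$). As before $F$ is almost bounded, hence almost uniformly continuous.

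The substance of the argument is the opening reduction: once one notices that total boundedness of the codomain renders almost-boundedness vacuous, there is no real obstacle, and what remains is Tietze's theorem together with the standard endpoint-avoidance device. The only point demanding a little care is the open-interval case, where one must verify that the interpolation never attains the values $a$ or $b$.
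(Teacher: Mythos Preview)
Your proposal is correct and follows essentially the same line as the paper: both observe that a totally bounded codomain makes almost-boundedness automatic, so the problem reduces to the classical Tietze extension theorem. The paper phrases the reduction via ``every sequence in a totally bounded set is cofinally Cauchy'' (hence any continuous map into it is CC-regular, i.e.\ almost bounded), whereas you argue directly from the definition of almost boundedness; these are trivially equivalent. Your treatment of the open-interval case via the Urysohn interpolation is more explicit than the paper's, which simply appeals to Tietze without spelling out the endpoint-avoidance.
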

\begin{proof}
	Every continuous function from $X$ whose range is totally bounded is almost uniformly continuous because every sequence in totally bounded set is cofinally cauchy. Thus by Tietze's extension theorem the result holds.
	
\end{proof}
\noindent Using the above result, we get a new characterization of cofinally complete metric spaces.
\begin{theorem}
	Let $(X,d)$ be a metric space. $X$ is cofinally complete if and only if whenever $f:X\rightarrow\mathbb{R}$ is an almost uniformly continuous function such that $f$ is never zero, then $\frac{1}{f}$ is almost bounded, in particular $\frac{1}{f}$ is almost uniformly continuous.
\end{theorem}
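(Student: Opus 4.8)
The plan is to reduce both implications to the equivalence established just above, namely that $(X,d)$ is cofinally complete if and only if every continuous real-valued function on it is almost bounded (equivalently, uniformly locally bounded, by Proposition~\ref{P1}). I will also lean on one elementary observation about the target space $\mathbb{R}$: a subset of $\mathbb{R}$ can be covered by finitely many $\epsilon$-balls exactly when it is bounded, so for a real-valued function $h$ the almost boundedness condition collapses to the statement that there is a single $\delta>0$ with $h(A)$ bounded whenever $\mathrm{diam}(A)<\delta$. In particular this property is both preserved and reflected under the passage $h\mapsto |h|$ and under adding a constant, $h\mapsto |h|+c$, since none of these operations affects boundedness of images.

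For the forward direction I would assume $(X,d)$ is cofinally complete and take $f:X\to\mathbb{R}$ almost uniformly continuous with $f$ never zero. Then $f$ is continuous and nowhere zero, so $\tfrac{1}{f}$ is again a continuous real-valued function on $X$. Since $X$ is cofinally complete, every continuous real-valued function on it is almost bounded, so $\tfrac{1}{f}$ is almost bounded; being continuous as well, it is almost uniformly continuous. This direction is essentially immediate once the characterization of cofinally complete spaces as AUC spaces is invoked.

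For the converse I would argue by contrapositive. If $(X,d)$ is \emph{not} cofinally complete, the same equivalence furnishes a continuous $g:X\to\mathbb{R}$ that is not almost bounded. I would then set
\[
f=\frac{1}{\,|g|+1\,}.
\]
This $f$ is continuous, takes values in $(0,1]$, hence is never zero and has bounded range, so it is almost bounded and therefore almost uniformly continuous. Its reciprocal, however, is $\tfrac{1}{f}=|g|+1$, which fails to be almost bounded: $|g|$ inherits the non-almost-boundedness of $g$ by the observation above, and adding $1$ does not restore it. Thus $f$ is an almost uniformly continuous, nowhere-zero function whose reciprocal is not almost bounded, contradicting the hypothesized reciprocal property and completing the contrapositive.

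The only genuinely delicate point is the real-variable bookkeeping in the converse: one must verify carefully that, for functions into $\mathbb{R}$, almost boundedness is equivalent to the single-$\delta$ local boundedness condition and is consequently preserved and reflected by $g\mapsto |g|+1$. Once that is settled, everything else is formal, resting on Proposition~\ref{P1} and the identification of cofinally complete spaces with AUC spaces; no new sequential or compactness argument is needed.
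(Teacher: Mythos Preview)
Your proof is correct, and the forward direction matches the paper's argument exactly. For the converse, however, you take a genuinely different and more streamlined route. The paper argues directly from the sequential definition: it picks a cofinally Cauchy sequence $(x_n)$ with no cluster point, observes that $A=\{x_n\}$ is closed and discrete, defines $f(x_n)=1/n$ on $A$, invokes the Extension Theorem to obtain an almost uniformly continuous $F:X\to(0,2)$, and then checks that $1/F$ fails to be CC-regular (hence not almost bounded) because it sends the cofinally Cauchy $(x_n)$ to the sequence $(n)$. Your argument instead stays at the level of the global characterizations already proved: you pull a continuous $g$ that is not almost bounded directly from Proposition~\ref{P1} and the AUC equivalence, and then the algebraic trick $f=1/(|g|+1)$ does all the work. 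What your approach buys is economy---no extension theorem, no explicit sequence manipulation---at the cost of relying on the real-variable observation that almost boundedness into $\mathbb{R}$ coincides with uniform local boundedness (which you correctly flag as the one point needing care). The paper's approach, by contrast, produces an explicit witness tied to the failing cofinally Cauchy sequence, which is perhaps more concrete but requires the Tietze-type extension machinery developed just before.
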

\begin{proof}
	If $X$ is cofinally complete, then the result is true since continuity of $f$ implies the continuity of $\frac{1}{f}$ and every continuous function on $X$ is almost uniformly continuous. Conversely, suppose $(X,d)$ is not cofinally complete. Therefore, $\exists~(x_n)\in X$ which is cofinally Cauchy but does not cluster. Thus, $A = \left\{x_n ~:~ n\in\mathbb{N}\right\}$ is discrete and closed. Define
	$$f: A \longrightarrow (0, 2)$$
	$$x_n\longmapsto \frac{1}{n}$$
	$f$ is an almost uniformly continuous function.
	
	Using the previous theorem, extend $f$ to the whole space $X$ i.e. $F: X\rightarrow (0,2)$. Cleary $F$ is never zero but $\frac{1}{F}$ is not CC-regular as $(x_n)$ is cofinally Cauchy but $(n)$ is not.
\end{proof}
\noindent We have already noted that every uniformly continuous function between two metric spaces is almost uniformly continuous. We now look for the conditions under which the converse is also true. We observe that on some well-known class of metric spaces, the class of almost uniformly continuous functions is contained in the class of uniformly continuous functions.
\begin{theorem}
	Let $\left(X, d\right)$ be a metric space. The following statements are equivalent.
	\begin{enumerate}[label=(\alph*)]
		\item $\left(X, d\right)$ is an Atsuji space.
		\item Every almost uniformly continuous function from $\left(X, d\right)$ to a metric space $\left(Y, \rho\right)$ is uniformly continuous.
		\item Every real valued almost uniformly continuous function on $X$ is uniformly continuous.
	\end{enumerate}
\end{theorem}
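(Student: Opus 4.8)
The plan is to prove the cycle (a)$\Rightarrow$(b)$\Rightarrow$(c)$\Rightarrow$(a). The implication (b)$\Rightarrow$(c) is immediate, since (c) is merely the special case $(Y,\rho)=\mathbb{R}$ of (b), so no work is needed there.

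For (a)$\Rightarrow$(b) I would use the sequential description of uniform continuity together with a standard structural feature of Atsuji spaces. Let $f:(X,d)\to(Y,\rho)$ be almost uniformly continuous; in particular $f$ is continuous. It suffices to show that $d(x_n,y_n)\to 0$ forces $\rho(f(x_n),f(y_n))\to 0$. If this failed, then after passing to a subsequence there would be $\epsilon>0$ with $\rho(f(x_n),f(y_n))\ge \epsilon$, and I may assume $x_n\neq y_n$ (otherwise those terms contribute $0$). Then the isolation functional satisfies $I(x_n)=\inf_{z\neq x_n}d(x_n,z)\le d(x_n,y_n)\to 0$, and one of the classical characterizations of Atsuji spaces recorded in \cite{[KJ1]} guarantees that any such sequence clusters; passing to a convergent subsequence $x_{n_k}\to p$ forces $y_{n_k}\to p$ as well, and continuity of $f$ yields $\rho(f(x_{n_k}),f(y_{n_k}))\to 0$, a contradiction. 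Equivalently, one may simply quote the standard fact, also listed in \cite{[KJ1]}, that $X$ is Atsuji if and only if every continuous map out of $X$ into an arbitrary metric space is uniformly continuous.

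The real content is (c)$\Rightarrow$(a), which I would prove by contraposition: assuming $X$ is not Atsuji, I will construct a real-valued almost uniformly continuous function that is not uniformly continuous. By Toader's characterization \cite{[T]}, failure of the Atsuji property yields a pseudo-Cauchy sequence of distinct points with no cluster point, whose underlying set is therefore closed and discrete. Applying the definition of pseudo-Cauchy repeatedly, I would extract from it two sequences $(x_m)$ and $(y_m)$, consisting of pairwise distinct points and with strictly increasing indices, such that $d(x_m,y_m)<\tfrac{1}{m}$, so that $S=\{x_m:m\in\mathbb{N}\}\cup\{y_m:m\in\mathbb{N}\}$ remains closed and discrete. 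I would then define $g:S\to[0,1]$ by $g(x_m)=0$ and $g(y_m)=1$; since $S$ is discrete, $g$ is continuous, and by Tietze's extension theorem it extends to a continuous $F:X\to[0,1]$.

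Finally I would observe that $F$ is automatically almost bounded, because its range lies in the totally bounded set $[0,1]$: for any $\epsilon>0$ the interval $[0,1]$ is covered by finitely many $\epsilon$-balls, whence $F(A)$ lies in that finite union for every $A\subseteq X$, exactly the phenomenon behind the Extension Theorem above. Thus $F$ is continuous and almost bounded, i.e.\ almost uniformly continuous, yet $d(x_m,y_m)\to 0$ while $|F(x_m)-F(y_m)|=1\not\to 0$, so $F$ is not uniformly continuous, contradicting (c). I expect the main obstacle to be the extraction step: one must select the pairs $(x_m,y_m)$ along the pseudo-Cauchy sequence with strictly increasing indices so that all chosen points are genuinely distinct and $S$ inherits closedness and discreteness, since this is precisely what legitimizes both the definition of $g$ and the application of Tietze.
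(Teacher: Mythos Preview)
Your argument is correct. The implication (a)$\Rightarrow$(b) via the standard ``every continuous map out of an Atsuji space is uniformly continuous'' is exactly what the paper does, and (b)$\Rightarrow$(c) is trivial in both.

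For (c)$\Rightarrow$(a) you take a genuinely different route from the paper. You invoke Toader's pseudo-Cauchy criterion once: a single pseudo-Cauchy sequence of distinct points with no cluster point already supplies, after the extraction you describe, a closed discrete set carrying two asymptotic sequences, on which you define a two-valued map and extend by Tietze into $[0,1]$. The paper instead argues in two stages: it first shows $X$ must be complete (via a non-convergent Cauchy sequence of distinct points), and then, using completeness, shows that any sequence with $I(x_n)\to 0$ clusters by separating a subsequence into disjoint balls and picking nearby companions $y_n$. Your approach is shorter and avoids the preliminary completeness step, at the cost of quoting Toader's characterization; the paper's approach is more self-contained with respect to the isolation functional $I$. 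Both end with the identical device: a two-valued function on a closed discrete set, extended via Tietze to a bounded continuous (hence almost uniformly continuous) map that is visibly not uniformly continuous. Your remark that any subset of a sequence with no cluster point remains closed is the key point making the extraction legitimate, and it is correct.
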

\begin{proof}
	$(a)\Rightarrow (b):$ Since every continuous function from an Atsuji space to any metric space is uniformly continuous, the statement (b) is true.
	
	$(b)\Rightarrow (c):$ This is immediate.
	
	$(c)\Rightarrow (a):$ First we prove that $X$
	is complete. Suppose $(x_n)$ is a Cauchy sequence of distinct points in $X$ such that it does not converge. Thus the set $A=\{x_n : n \in \N \}$ is closed and discrete. Define a function $f$ from $A$ to $\mathbb{R}$ as follows.
	$$f(x_n)=\left\{ \begin{array}{lll}
	1     & : & \mbox{$n$ is odd}\\
	2     & :&  \mbox{$n$ is even}
	\end{array}\right.$$
	Clearly, $f$ is almost uniformly continuous function on $A$. By extension theorem, extend it to an almost uniformly continuous function $F$ from $X$ to $\mathbb{R}$. Note that the function $F$ is not uniformly continuous, we arrive at a contradiction.\\
	Let $(x_n)$ be a sequence of distinct points in $X$ such that $\lim\limits_{n \to \infty} I(x_n) = 0$. Suppose the sequence does not cluster. Since $X$ is complete, $(x_n)$ has no Cauchy sequence. Thus $\exists \delta > 0$ such that by passing to a subsequence $B(x_n, \delta)$ is disjoint $\forall n \in N$. Also, $\lim\limits_{n \to \infty} I(x_n) = 0$ implies that we can assume $I(x_n) <$ min$\{\delta, \frac{1}{n}\}~\forall n \in \N$. Let $\delta_n$ = min$\{\delta, \frac{1}{n}\}$ $\forall n \in \N$ . Consequently, we can choose $y_n \neq x_n$ such that $d(x_n, y_n) < \delta_n$. Therefore the sequence $(y_n)$ does not have any cluster point and the set $A=\{x_n, y_n : n \in \N\}$ is closed and discrete in $X$. Define a function $f$ as follows
	$$f(x)=\left\{ \begin{array}{lll}
	1     & : & \mbox{if $x= x_n$ for some $n$}\\
	2     & :&  \mbox{if $x= y_n$ for some $n$}
	\end{array}\right.$$
	The almost uniform extension of $f$ to the whole space $X$ is almost uniformly continuous but not uniformly continuous, which is a contradiction. Thus $(X, d)$ is an Atsuji space.
\end{proof}

\noindent Our next results studies those metric spaces on which each almost uniformly continuous function is bounded.
\begin{theorem}
	Let $(X,d)$ be a metric space. Then the following are equivalent:
	\begin{itemize}
		\item[$(a)$] The metric space $(X,d)$ is totally bounded.
		\item[$(b)$] Whenever $(Y,\rho)$ is a metric space and $f: (X,d) \rightarrow (Y,\rho)$ is almost uniformly continuous, then f is bounded.
		\item[$(c)$] Whenever $f: (X,d) \rightarrow (Z,\sigma)$ is  almost uniformly continuous where $(Z, \sigma)$ is an unbounded metric space, then f is bounded.
		\item[$(d)$] Whenever $f$ is a real valued almost uniformly continuous function, f is bounded.
	\end{itemize}
\end{theorem}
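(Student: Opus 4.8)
The plan is to establish the cycle $(a)\Rightarrow(b)\Rightarrow(c)\Rightarrow(d)\Rightarrow(a)$, concentrating essentially all of the work in the final implication. The two middle steps are purely formal: statement (c) is just the restriction of (b) to unbounded codomains, while (d) is the special case of (c) obtained by taking $(Z,\sigma)=\mathbb{R}$, which is unbounded. So $(b)\Rightarrow(c)$ and $(c)\Rightarrow(d)$ follow by specialization, with no real argument beyond checking that the hypotheses restrict correctly.

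For $(a)\Rightarrow(b)$ I would use only the almost boundedness of $f$ (which it possesses as an almost uniformly continuous map) together with total boundedness of $X$. First I would fix $\epsilon=1$ and let $\delta>0$ be the radius provided by almost boundedness, so that any set of diameter $<\delta$ is carried by $f$ into a finite union of $1$-balls. Covering $X$ by finitely many balls $B(p_1,\delta/3),\dots,B(p_m,\delta/3)$, each of diameter $<\delta$, one sees that $f(X)$ lies in a finite union of finitely many $1$-balls, hence in a bounded subset of $Y$; thus $f$ is bounded.

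The substance of the theorem is $(d)\Rightarrow(a)$, which I would prove contrapositively: assuming $X$ is not totally bounded, I would build a real-valued almost uniformly continuous function with unbounded range. Non-total-boundedness produces an $\epsilon>0$ and points $(x_n)$ with $d(x_n,x_m)\geq\epsilon$ for $n\neq m$ (chosen greedily so that no finite $\epsilon$-net exists). The idea is to erect a tent of height $n$ over each $x_n$: put $g_n(x)=n\cdot\max\{0,\,1-\frac{3}{\epsilon}d(x,x_n)\}$, which is supported on $B(x_n,\epsilon/3)$, and set $f=\sum_n g_n$. Since the $x_n$ are $\epsilon$-separated, the supports $B(x_n,\epsilon/3)$ are pairwise disjoint, so $f$ is well defined, agrees near each point with a single $g_n$ (or with $0$), is therefore continuous, and satisfies $f(x_n)=n$, so it is unbounded.

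The hard part will be verifying that $f$ is almost bounded in spite of the unbounded tent heights. The crucial observation is that a ball $B(y,\epsilon/6)$ can meet at most one support: if it met both $B(x_n,\epsilon/3)$ and $B(x_m,\epsilon/3)$, the triangle inequality would force $d(x_n,x_m)<\frac{\epsilon}{3}+\frac{\epsilon}{6}+\frac{\epsilon}{6}+\frac{\epsilon}{3}=\epsilon$, contradicting separation. Consequently any set of diameter $<\epsilon/6$ is contained in such a ball and is mapped by $f$ into $[0,n]$ for the unique relevant index $n$ (or into $\{0\}$), a bounded interval. Since a bounded subset of $\mathbb{R}$ is totally bounded, this interval is covered by finitely many balls of any prescribed radius, so $f$ is almost bounded with the single constant $\delta=\epsilon/6$. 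Hence $f$ is almost uniformly continuous and unbounded, contradicting (d); this closes the cycle and yields (a).
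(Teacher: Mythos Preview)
Your proof is correct and follows the same route as the paper: the cycle $(a)\Rightarrow(b)\Rightarrow(c)\Rightarrow(d)\Rightarrow(a)$, with the nontrivial step $(d)\Rightarrow(a)$ handled contrapositively by erecting tent functions of height $n$ over an $\epsilon$-separated sequence. Your choice of support radius $\epsilon/3$ (versus the paper's $\delta/2$) together with the explicit ``at most one support'' verification for balls of radius $\epsilon/6$ is in fact more careful than the paper, which simply asserts that its tent function is almost uniformly continuous without justification.
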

\begin{proof}
	$(a)\Rightarrow (b):$ Since $f$ is almost bounded, $\forall~\epsilon > $ 0 $\exists$ a $\delta >$ 0 such that for every $A \subseteq X$, if the diameter of $A$ is $< \delta$ then there is a finite subset $B = \left\{b_{1}, b_{2}, ...,b_{n}\right\}$ of $Y$ such that $f(A) \subseteq \displaystyle\bigcup_{i=1}^{n}B(b_{i}, \epsilon)$. As $X$ is totally bounded, $X \subseteq \displaystyle\bigcup_{i=1}^{m}B(x_{i}, \delta/2)$ for some $x_1, x_2,..., x_n \in X$. Thus $f(X)$ is contained in finite union of open balls of radius $\epsilon$ in $Y$. 
	
	$(b)\Rightarrow (c):$ This is immediate.
	
	$(c)\Rightarrow (d):$ This is immediate.
	
	$(d)\Rightarrow (a):$ Suppose $X$ is not totally bounded. Therefore, $\exists \delta > 0$ and a sequence $(x_n)$ such that $d(x_n, x_m) > \delta~ \forall n,~m \in \N$. Consequently, the function $f: (X,d) \rightarrow \mathbb{R}$ defined as:
	$$f(x)=\left\{ \begin{array}{lll}
	n- \frac{2n}{\delta}d(x, x_n)     & : & x\in B(x_n, \frac{\delta}{2}) \mbox{~for some}~ n \in \N\\
	~~~~~~~	0     & :& otherwise
	\end{array}\right.$$
	is almost uniformly continuous but not bounded, a contradiction.
\end{proof}
\noindent A metric space $(X, d)$ is UC if and only if each sequence $(x_n)$ in $X$ satisfying $\lim\limits_{n \to \infty} I(x_n) = 0$ clusters, where $I(x)= d(x,X\setminus \{x\})$ measures the isolation of $x$ in the space \cite{[MA],[KJ1]} . Similarly, a metric space $(X, d)$ is cofinally complete if and only if each sequence $(x_n)$ in $X$ satisfying $\lim\limits_{n \to \infty} \nu(x_n) = 0$ clusters, where $\nu(x)= sup\{\epsilon>0: cl(B_d(x,\epsilon))$ is compact$\}$ if $x$ has a compact neighborhood, and $\nu(x)=0$ otherwise \cite{[B1]}. Now we are ready to give few more results.
\begin{proposition}
	Let $f$ be a function from a metric space $(X, d)$ to $(Y,\rho)$ such that it is uniformly continuous on $\left\{x \in X : I(x) < \epsilon \right\}$ for some $\epsilon > 0$, then $f$ is almost uniformly continuous.
\end{proposition}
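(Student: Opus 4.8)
The plan is to exploit the partition of $X$ induced by the isolation functional $I$. Write $S = \{x \in X : I(x) < \epsilon\}$, on which $f$ is uniformly continuous by hypothesis, and $D = X \setminus S = \{x \in X : I(x) \geq \epsilon\}$. The key structural observation is that $D$ is uniformly $\epsilon$-discrete: if $x, y \in D$ are distinct, then $d(x,y) \geq I(x) \geq \epsilon$. In particular every point of $D$ is isolated in $X$, since $B(x, \epsilon) = \{x\}$ whenever $x \in D$. To conclude that $f$ is almost uniformly continuous, it suffices to verify two things: that $f$ is continuous on $X$, and that $f$ is almost bounded.

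First I would establish continuity of $f$ everywhere. At a point of $D$ this is immediate, as such a point is isolated. At a point $x \in S$, I would take any sequence $x_n \to x$ with $x_n \neq x$ and note that $I(x_n) \leq d(x_n, x) \to 0$, so that $x_n \in S$ for all large $n$. Uniform continuity of $f$ on $S$ then forces $\rho(f(x_n), f(x)) \to 0$, giving continuity at $x$.

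Next I would verify almost boundedness. Given $\eta > 0$, choose $\delta_0 > 0$ from the uniform continuity of $f$ on $S$ (so that $x, y \in S$ with $d(x,y) < \delta_0$ give $\rho(f(x), f(y)) < \eta$) and set $\delta = \min\{\delta_0, \epsilon/2\}$. For any $A \subseteq X$ with diameter less than $\delta$, the $\epsilon$-discreteness of $D$ together with $\delta \leq \epsilon/2$ forces $A$ to meet $D$ in at most one point. Hence $A = (A \cap S) \cup F$ with $F$ having at most one element; the portion $A \cap S$ maps into a single ball $B(f(a), \eta)$ for any $a \in A \cap S$, by uniform continuity, while the lone point of $F$ (if present) contributes at most one additional ball. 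Thus $f(A)$ is covered by at most two balls of radius $\eta$, which is precisely almost boundedness.

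Since $f$ is then continuous and almost bounded, it is almost uniformly continuous. I expect the only genuinely delicate point to be the continuity verification at points of $S$: one must observe that convergence to such a point forces the tail of the sequence into $S$, so that the hypothesis of uniform continuity on $S$ actually applies. The almost-boundedness half is then a short combinatorial argument driven entirely by the uniform $\epsilon$-discreteness of $D$.
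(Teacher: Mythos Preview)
Your argument is correct, and the continuity half matches the paper's approach closely: both note that a convergent sequence eventually has small isolation index and hence lies in $S$, so uniform continuity on $S$ yields continuity at the limit.

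For almost boundedness, however, you take a different and more direct route. The paper argues by contradiction: assuming almost boundedness fails, it produces for each $n$ a ball $B(x_n, 1/n)$ whose image is not finitely $\epsilon_o$-coverable, then invokes the $\epsilon_o/2$-discrete sequence construction from Theorem~\ref{T1} to reach a contradiction with uniform continuity on $S$. Your proof instead exploits the structural fact that $D = \{x : I(x) \geq \epsilon\}$ is $\epsilon$-discrete, so any set of diameter below $\min\{\delta_0, \epsilon/2\}$ meets $D$ in at most one point; the rest of the set lies in $S$ and maps into a single $\eta$-ball by uniform continuity. This gives a two-ball cover directly.

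Your approach is self-contained and avoids the somewhat intricate sequence selection of Theorem~\ref{T1}; it also makes transparent exactly why the uniform discreteness of the complement of $S$ is the mechanism driving almost boundedness. The paper's approach, on the other hand, fits the sequential style used throughout and foreshadows the parallel argument with the local compactness functional $\nu$ in the next proposition.
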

\begin{proof}
	Let $(x_n)$ be a sequence in $X$ converging to a point $x \in X$. Thus, there exists $n_0 \in \mathbb{N}$ such that $d(x_n, x_m) < \frac{\epsilon}{2}~~\forall~n,~m\geq n_0$. Thus, $f$ is uniformly continuous on the set $\left\{x, x_n~:~n\geq n_0\right\}$ which implies that $(f(x_n))$ converges to $f(x)$. Hence the function is continuous. \\
	Now suppose the function is not almost bounded. Therefore $\exists \epsilon_o > 0$ such that $\forall n \in \mathbb{N}$, $\exists x_n \in X$ such that $f(B(x_n, \frac{1}{n}))$ can not be written as finite union of open  balls of radius $\epsilon_o$ in $Y$. Using the same technique as in \ref{T1}, we can choose a sequence consisting infinite elements from each ball $B(x_n, \frac{1}{n})$ such that the image of the sequence is $\frac{\epsilon_o}{2}$ discrete. Choose $\frac{1}{n_o} < \epsilon$, since the function is uniformly continuous on the set $A = \left\{x \in X : I(x) < \epsilon \right\}$, $\exists \delta > 0$ such that $\forall x, y \in A$ and $d(x,y) < \delta$, $\rho(f(x), f(y)) < \frac{\epsilon_o}{2}$. Choosing $\frac{1}{m} < \mbox{min} \{\frac{1}{n_o}, \delta\}$, we get contradiction.
\end{proof}
\begin{remark}
	The converse of the above result is not true. Also if we replace the isolation functional with local compactness functional, the result won't be true, e.g., take any discontinuous function from $\mathbb{R}$ to $\mathbb{R}$, but if the function $f$ is continuous then the result will be true and can be proved in a similar manner.
\end{remark}
\begin{proposition}
	Let $f$ be a continuous function from a metric space $(X, d)$ to $(Y,\rho)$ such that it is uniformly continuous on $\left\{x \in X : \nu(x) < \epsilon \right\}$ for some $\epsilon > 0$, then $f$ is almost uniformly continuous.
\end{proposition}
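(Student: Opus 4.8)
The statement mirrors the previous proposition with the isolation functional $I$ replaced by the local compactness functional $\nu$, and crucially now adds continuity of $f$ as a hypothesis (which the remark after the previous proposition flagged as essential). So the plan is to adapt the earlier argument, but the continuity half is now free and the work lives entirely in showing almost boundedness. First I would recall the characterization stated just before this proposition: $(X,d)$ is cofinally complete exactly when every sequence with $\nu(x_n)\to 0$ clusters. The key structural fact I want is that the set $K=\{x\in X:\nu(x)\geq\epsilon\}$ is ``locally compact with uniform radius''—more precisely, for each $x\in K$ the closed ball $cl(B_d(x,\epsilon'))$ is compact for any $\epsilon'<\epsilon$. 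This is what will control the balls $B(x_n,\tfrac1n)$ whose centers fall in $K$.

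Next, following Theorem \ref{T1}, I would argue by contradiction: suppose $f$ is continuous but not almost bounded. Then there is $\epsilon_o>0$ and points $x_n$ with $f(B(x_n,\tfrac1n))$ not coverable by finitely many $\epsilon_o$-balls, and as in \ref{T1} one extracts a sequence whose image is $\tfrac{\epsilon_o}{2}$-discrete while its preimage is cofinally Cauchy. I would split the centers $x_n$ into two cases according to whether $\nu(x_n)<\epsilon$ or $\nu(x_n)\geq\epsilon$. On the set $\{x:\nu(x)<\epsilon\}$, $f$ is uniformly continuous by hypothesis, so exactly the computation at the end of the previous proof (choosing $\tfrac1m<\min\{\tfrac1{n_o},\delta\}$) yields a contradiction for those centers. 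For the centers with $\nu(x_n)\geq\epsilon$, I would use that $cl(B_d(x_n,\epsilon/2))$ is compact, so $f$ restricted to this compact ball is uniformly continuous there; for $n$ large enough that $\tfrac1n<\epsilon/2$, the ball $B(x_n,\tfrac1n)$ sits inside a compact set on which $f$ is uniformly continuous, and its image therefore \emph{can} be covered by finitely many $\epsilon_o$-balls, contradicting the choice of $x_n$.

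The continuity hypothesis is exactly what the first paragraph of the previous proof derived for free in the $I$-version; here $\nu$ carries no such implication (a point can have a large compact neighborhood yet $f$ be wild across it), which is why the remark's counterexample—a discontinuous function on $\mathbb{R}$—breaks the naive analogue. So I would state at the outset that continuity is assumed and used only to guarantee that $f$ is uniformly continuous on each compact ball around the high-$\nu$ centers.

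The main obstacle I anticipate is the high-$\nu$ case: one must be careful that ``$cl(B_d(x_n,\epsilon/2))$ compact'' genuinely follows from $\nu(x_n)\geq\epsilon$ for the $\epsilon'<\epsilon$ radii, and that compactness of the ball upgrades pointwise continuity to uniform continuity on that ball uniformly enough to cover $f(B(x_n,\tfrac1n))$ by finitely many $\epsilon_o$-balls. The clean way is to invoke that a continuous image of a compact set is compact, hence totally bounded, so $f(cl(B_d(x_n,\epsilon/2)))$ is itself covered by finitely many $\epsilon_o$-balls directly—this sidesteps any uniform-continuity bookkeeping and immediately contradicts the non-coverability of $f(B(x_n,\tfrac1n))\subseteq f(cl(B_d(x_n,\epsilon/2)))$. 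Thus the proof reduces to the two cases, the low-$\nu$ case handled verbatim as before and the high-$\nu$ case handled by compactness of the image.
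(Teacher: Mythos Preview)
Your plan is sound and matches the paper's own indication that the result ``can be proved in a similar manner'' to the preceding proposition, with the continuity hypothesis now supplying what the first paragraph of that earlier proof derived from uniform continuity on $\{I<\epsilon\}$. Your handling of the high-$\nu$ centers via compactness of the image is exactly the new ingredient needed.

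There is one small imprecision in the low-$\nu$ case. Splitting on whether the \emph{center} $x_n$ satisfies $\nu(x_n)<\epsilon$ does not by itself place the entire ball $B(x_n,\tfrac1n)$ inside $\{x:\nu(x)<\epsilon\}$, which is what the final computation of the previous proof actually uses (in the $I$-version, the ball being infinite forces $I(y)<2/n$ for every $y$ in it, but there is no analogous automatic mechanism for $\nu$). The clean fix is either to split at $\epsilon/2$ and use that $\nu$ is $1$-Lipschitz, so that $\nu(x_n)<\epsilon/2$ together with $1/n<\epsilon/2$ gives $\nu(y)<\epsilon$ for every $y\in B(x_n,\tfrac1n)$; or, equivalently, to observe that your compactness argument already applies whenever \emph{any} point of $B(x_n,\tfrac1n)$ has $\nu\ge\epsilon$ (since then, for $2/n<\epsilon$, the whole ball lies in a compact neighborhood of that point), so the correct dichotomy is ``some point of the ball has $\nu\ge\epsilon$'' versus ``all points of the ball have $\nu<\epsilon$.'' With that adjustment the argument goes through exactly as you describe.
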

\begin{proposition}
	Let $(X, d)$ be a metric space such that every continuous function is uniformly continuous on $\left\{x \in X : \nu(x) < \lambda \right\}$ for some $\lambda > 0$, then $(X, d)$ is cofinally complete.
\end{proposition}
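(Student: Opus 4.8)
The plan is to deduce the statement from the immediately preceding proposition (the $\nu$-version: a continuous function that is uniformly continuous on $\{x : \nu(x) < \epsilon\}$ for some $\epsilon > 0$ is almost uniformly continuous) together with the equivalence, established at the end of Section~2, that a metric space is cofinally complete if and only if it is an AUC space. In other words, I would not re-run a sequence-extraction argument from scratch; instead I would observe that the hypothesis is exactly what that proposition needs, applied to every continuous real-valued function at once.

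First I would fix an arbitrary continuous function $f : (X,d) \to \mathbb{R}$. By hypothesis there is some $\lambda > 0$ such that $f$ is uniformly continuous on the set $\{x \in X : \nu(x) < \lambda\}$. The previous proposition (with $(Y,\rho) = \mathbb{R}$ and $\epsilon = \lambda$) then yields that $f$ is almost uniformly continuous. Since $f$ was an arbitrary continuous real-valued function, every continuous real-valued function on $X$ is almost uniformly continuous; that is, $(X,d)$ is an AUC space by definition. Invoking the conclusion of Section~2 that the class of AUC spaces coincides with the class of cofinally complete metric spaces, I conclude that $(X,d)$ is cofinally complete.

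I expect the only point that needs care is the quantifier on $\lambda$: whether a single $\lambda$ works for all $f$ simultaneously, or whether $\lambda$ may depend on $f$. The argument above is insensitive to this distinction, because the previous proposition is applied to one $f$ at a time and only requires the existence of \emph{some} threshold $\epsilon > 0$ for that particular $f$; hence either reading of the hypothesis suffices.

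For robustness I would also keep in mind a direct route that bypasses the previous proposition, using Beer's criterion that $(X,d)$ is cofinally complete if and only if every sequence $(x_n)$ with $\lim_n \nu(x_n) = 0$ clusters. Assuming $(X,d)$ is not cofinally complete, one takes such a non-clustering sequence; passing to a subsequence of distinct points, the set $\{x_n : n \in \mathbb{N}\}$ is closed and discrete, and since $\nu(x_n) \to 0$ all but finitely many $x_n$ lie in $A = \{x : \nu(x) < \lambda\}$. The main obstacle on this route is manufacturing, from the mere decay $\nu(x_n) \to 0$, two sequences inside $A$ that collide in distance yet stay separated under a continuous function built on all of $X$, and checking that such a function violates uniform continuity on $A$. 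This is precisely the delicate construction that the previous proposition already packages, which is why I favor the reduction above.
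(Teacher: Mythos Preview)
Your reduction is correct and is genuinely different from the paper's argument. The paper proceeds directly: it takes a cofinally Cauchy sequence $(x_n)$ with no cluster point, uses Beer's decomposition into infinitely many infinite index sets $\{\mathbb{M}_j\}$ on which the sequence has diameter $<1/j$, defines a $\{1,2\}$-valued function on the closed discrete set $\{x_n\}$ alternating within each $\mathbb{M}_j$, extends it via the Extension Theorem to a continuous (indeed almost uniformly continuous) $F$ on all of $X$, and then shows that for every $\lambda>0$ the restriction of $F$ to $\{x:\nu(x)<\lambda\}$ fails to be uniformly continuous because infinitely many of the oscillating pairs sit inside that set. Your route instead packages all of that work into the immediately preceding proposition plus the Section~2 equivalence $\text{AUC}=\text{cofinally complete}$, yielding a two-line proof. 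What the paper's approach buys is an explicit witness function and independence from the preceding proposition (which the paper states without a separate proof); what your approach buys is economy and the observation that this proposition is essentially a corollary once the $\nu$-version is in hand. Your remark that the quantifier placement on $\lambda$ is immaterial is also right, and in fact the paper's construction likewise produces a single $F$ that fails for every $\lambda$, so both readings are covered either way.
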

\begin{proof} Let $(x_n)$ be a cofinally Cauchy sequence of distinct points in $X$. By Proposition 2.4 in \cite{[B1]}, there exists a pairwise disjoint family $\{\mathbb{M}_j: j \in \N\}$ of infinite subsets of $\N$ such that if $i \in \mathbb{M}_j$ and $l \in \mathbb{M}_j$ then $d(x_i, x_l)< \frac{1}{j}$. Suppose the sequence $(x_n)$ does not cluster, which implies the set $A =\{x_n : n \in \N \}$ is closed and discrete. Enumerate the elements of $\mathbb{M}_n$ as $l^n_1, l^n_2, l^n_3,...$ for each $n \in \N$. Therefore, for each $n$, there exists unique $i,~k \in \N$ such that $n= l^k_i$. Define a function $f$ on $A$ such that $f(x_{l^k_i}) = 1$ if $i$ is even and $f(x_{l^k_i}) = 2$ if $i$ is odd. Since $f$ is almost uniformly continuous on $A$, extend it to an almost uniformly continuous function $F$ on the whole space $X$.
	
	We claim $\nexists$ any $\lambda > 0$ such that $F$ is uniformly continuous on $\left\{x \in X : \nu(x) < \lambda \right\}$. Suppose the claim is not true. Therefore $\exists \lambda > 0$ such that $F$ is uniformly continuous on $\left\{x \in X : \nu(x) < \lambda \right\}$. Choose $\frac{1}{n_o} < \lambda$, consequently $\nu(x_k) < \frac{1}{n_o} < \lambda~ \forall x_k \in \displaystyle\bigcup_{n \geqslant n_o}\mathbb{M}_n$ but $F$ is not uniformly continuous on $\displaystyle\bigcup_{n \geqslant n_o}\mathbb{M}_n$, we arrive at a contradiction.
\end{proof}
\medskip
\section{Maps that preserve cofinally complete spaces}
In general, a continuous image of a cofinally complete space need not be a cofinally complete space. The following example shows that a homeomorphic image of a cofinally complete space need not be cofinally complete space.
\begin{example} Let $X = \N$ and $Y=\{1/n : n \in \N \}$ both endowed with the usual distance metric. Let $f: X \rightarrow Y$ be defined as $f(n) = \frac{1}{n}~ \forall n \in \N$. Clearly, $f$ is homeomorphism but $Y$ is not a cofinally complete space.
\end{example}

Recall that a bijection $f$ from a metric space $(X,d)$ to another metric space $(Y, \rhd)$ is called homeomorphism if both $f$ and $f^{-1}$ are continuous and it is called uniform homeomorphism if both $f$ and $f^{-1}$ are uniformly continuous. In relation to these definitions, we define the following.
\begin{definition}
	A bijection $f$ from a metric space $(X,d)$ to another metric space $(Y, \rhd)$ is called almost uniform homeomorphism if both $f$ and $f^{-1}$ are almost uniformly continuous. If there exists such a bijection between two metric spaces, then the spaces are called almost uniformly homeomorphic.
\end{definition}

\begin{proposition}
	If two metric spaces $(X,d)$ and $(Y, \rho)$ are almost uniformly homeomorphic, then $(X,d)$ is cofinally complete if and only if $(Y,\rho)$ is cofinally complete.
\end{proposition}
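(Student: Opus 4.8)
The plan is to exploit two facts already established: that every almost uniformly continuous function is continuous (this is built into the definition, since almost uniform continuity means continuous and almost bounded), and that every almost uniformly continuous function is CC-regular, i.e. it carries cofinally Cauchy sequences to cofinally Cauchy sequences. Since cofinal completeness is exactly the assertion that every cofinally Cauchy sequence clusters, the whole argument reduces to transporting a cofinally Cauchy sequence across the homeomorphism and then pulling back a convergent subsequence.

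Let $f\colon (X,d)\to(Y,\rho)$ be an almost uniform homeomorphism, so that both $f$ and $f^{-1}$ are almost uniformly continuous, and hence both continuous and both CC-regular. By symmetry it suffices to prove a single implication, so I would assume $(X,d)$ is cofinally complete and deduce that $(Y,\rho)$ is cofinally complete. First I would take an arbitrary cofinally Cauchy sequence $(y_n)$ in $Y$ and form its preimage $x_n=f^{-1}(y_n)$. Because $f^{-1}$ is CC-regular, $(x_n)$ is cofinally Cauchy in $X$. Cofinal completeness of $X$ then gives that $(x_n)$ clusters, so some subsequence $(x_{n_k})$ converges to a point $x\in X$. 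Applying the continuity of $f$ yields $y_{n_k}=f(x_{n_k})\to f(x)$, so $(y_n)$ admits a convergent subsequence and therefore clusters. As $(y_n)$ was arbitrary, $(Y,\rho)$ is cofinally complete. Interchanging the roles of $X$ and $Y$, and using that $f$ itself is CC-regular while $f^{-1}$ is continuous, gives the reverse implication.

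The argument is essentially routine once the earlier propositions are in hand, and I do not anticipate a genuine obstacle. The one point deserving care is that \emph{clustering} must be read as the existence of a convergent subsequence, and that continuity preserves precisely this, so that a cluster point of the preimage sequence is carried to a cluster point of the image sequence. Both ingredients are needed in tandem: preservation of cofinal Cauchyness comes from the CC-regularity of $f^{-1}$, while preservation of the subsequential limit comes from the continuity of $f$. It is worth emphasizing that mere continuity of $f$ would \emph{not} suffice to transport cofinally Cauchy sequences forward---this is exactly why the almost uniform continuity hypothesis, which upgrades continuity to CC-regularity, is indispensable here.
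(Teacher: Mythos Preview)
Your argument is correct and is exactly the natural proof one expects. The paper actually states this proposition without proof, treating it as an immediate consequence of the earlier result that every almost uniformly continuous function is CC-regular; your write-up supplies precisely the details behind that implicit claim.
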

\begin{definition}
	A subset $A$ of a metric space $(X, d)$ is called cofinally complete subspace of $X$ if every cofinally Cauchy sequence in $A$ clusters in $A$.
\end{definition}
\begin{proposition}
	Let $(X,d)$ be a metric space and $A$ be its subset. The following statements are equivalent.
	\begin{enumerate}[label=(\alph*)]
		\item $A$ is a cofinally complete subspace of $X$.
		\item Whenever $(x_n)$ is a sequence in $X$ with $\lim\limits_{n \to \infty} \nu_A(x_n) = 0$ then $(x_n)$ has a cluster point in $A$.
	\end{enumerate}
\end{proposition}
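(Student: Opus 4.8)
The plan is to reduce everything to Beer's $\nu$-criterion recalled above, applied not to $X$ but to the metric space $(A,d|_A)$. The starting point is the observation that ``$A$ is a cofinally complete subspace of $X$'' says exactly that $(A,d|_A)$ is a cofinally complete metric space in its own right, since both being cofinally Cauchy and clustering in $A$ are intrinsic to the restricted metric. Writing $\widetilde{\nu}$ for the functional $\nu$ computed inside $(A,d|_A)$, Beer's criterion then reads: $A$ is a cofinally complete subspace if and only if every sequence $(a_n)$ in $A$ with $\widetilde{\nu}(a_n)\to 0$ clusters in $A$. Thus the proposition will follow once I relate the ambient functional $\nu_A(x)=\sup\{\epsilon>0:cl(B(x,\epsilon))\cap A \text{ is compact}\}$ (defined on all of $X$) to the intrinsic functional $\widetilde{\nu}$ (defined on $A$).

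The key step I would isolate first is a lemma with two parts: (i) $\nu_A(a)=\widetilde{\nu}(a)$ for every $a\in A$; and (ii) $\nu_A$ is nonexpansive, $|\nu_A(x)-\nu_A(y)|\le d(x,y)$. Part (ii) is routine: if $cl(B(x,\epsilon))\cap A$ is compact and $d(x,y)=t$, then $B(y,\epsilon-t)\subseteq B(x,\epsilon)$ forces $cl(B(y,\epsilon-t))\cap A$ to be a closed-in-$A$ subset of a compact set, hence compact, so $\nu_A(y)\ge\nu_A(x)-d(x,y)$, and symmetry finishes it. Part (i) is where the care lies: both inequalities are obtained by passing between $B(a,\epsilon)\cap A$ and its closures taken in $X$ and in $A$, and repeatedly invoking that a subset which is closed in $A$ and contained in a compact set is itself compact -- this is exactly where one uses that compact subsets of a metric space are closed, which is also what guarantees the relevant cluster points land inside $A$.

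Granting the lemma, the implication $(b)\Rightarrow(a)$ is immediate: given $(a_n)$ in $A$ with $\widetilde{\nu}(a_n)\to 0$, part (i) gives $\nu_A(a_n)\to 0$, so by $(b)$ the sequence clusters in $A$; by Beer's criterion for $(A,d|_A)$ this is precisely cofinal completeness of the subspace $A$. For $(a)\Rightarrow(b)$, let $(x_n)$ in $X$ satisfy $\nu_A(x_n)\to 0$. Since $cl(B(x_n,\epsilon))\cap A=\emptyset$ (hence compact) whenever $\epsilon<d(x_n,A)$, we get $\nu_A(x_n)\ge d(x_n,A)$, so $d(x_n,A)\to 0$; choose $a_n\in A$ with $d(x_n,a_n)\to 0$. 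By (ii), $\nu_A(a_n)\le\nu_A(x_n)+d(x_n,a_n)\to 0$, and then $\widetilde{\nu}(a_n)=\nu_A(a_n)\to 0$ by (i). As $A$ is cofinally complete, Beer's criterion produces a subsequence $a_{n_k}\to p\in A$, and $d(x_{n_k},a_{n_k})\to 0$ then gives $x_{n_k}\to p$, so $(x_n)$ clusters at $p\in A$.

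I expect the only genuine obstacle to be part (i) of the lemma: the bookkeeping of which closures and which notion of compactness live in $X$ versus in $A$ must be handled carefully, and the equality $\nu_A(a)=\widetilde{\nu}(a)$ would actually fail if the functional were instead defined using $cl(B(x,\epsilon)\cap A)$ rather than $cl(B(x,\epsilon))\cap A$ (for instance at a point of $A=\mathbb{Q}\cap[0,1]$ inside $X=\mathbb{R}$). Once the lemma is in place, both implications are short, and no completeness hypothesis on $X$ is needed beyond Beer's criterion itself.
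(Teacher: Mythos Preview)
Your argument is correct. The paper states this proposition without proof, evidently regarding it as an immediate restatement of Beer's $\nu$-criterion for the metric space $(A,d|_A)$, with $\nu_A$ read as the intrinsic local-compactness functional of the subspace; every subsequent use of $\nu_A$ in the paper evaluates it only at points of $A$. You have instead taken the phrase ``sequence in $X$'' literally, supplied an extension of $\nu_A$ to all of $X$, and then done the additional work (parts (i) and (ii) of your lemma) needed to reduce that stronger formulation to Beer's criterion. This is strictly more than the paper does: the paper's implicit argument handles only sequences already lying in $A$, whereas your Lipschitz estimate together with the inequality $\nu_A(x)\ge d(x,A)$ lets you pull an arbitrary sequence in $X$ back to a companion sequence in $A$. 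Your remark that part (i) would fail under the alternative definition $cl\bigl(B(x,\epsilon)\cap A\bigr)$, as witnessed by $A=\mathbb{Q}\cap[0,1]$ in $X=\mathbb{R}$, is exactly the right sanity check and pinpoints where the care is needed.
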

\begin{remark}
	Note that if $A$ is a cofinally complete subspace of a metric space $(X,d)$ and $\lim\limits_{n \to \infty} \nu(x_n) = 0$ for some sequence $(x_n)$ in $A$, then the sequence need not cluster. For example, consider the metric space $X=\{(n-1/2, n+1/2) \cap \mathbb{Q} : n \in \N\}$ endowed with the usual distance metric. The set $A= \{n : n \in \N\}$ is a cofinally complete subspace of $X$ such that $\lim\limits_{n \to \infty} \nu(n) = 0$ but the sequence $(n)$ does not have any cluster point in $A$. Also, if every sequence $(x_n)$ in a subset $A$ of a metric space $(X,d)$ with $\lim\limits_{n \to \infty} \nu(x_n) = 0$ clusters in $A$, then $A$ need not be a cofinally complete metric space. For example, let $X=\{0,~ 1/n : n \in \N\}$ endowed with usual distance metric and let $A=\{1/n : n \in \N\}$. There is no sequence $(x_n)$ in $A$ such that $\lim\limits_{n \to \infty} \nu(x_n) = 0$, but $A$ is not a cofinally complete subspace.
\end{remark}
\begin{definition}
	A function $f$ from a metric space $(X,d)$ to a metric space $(Y,\rho)$ is said to be CC-preserving if for every cofinally complete subspace $A$ of $X$, $\overline{f(A)}$ is a cofinally complete subspace of $Y$.
\end{definition}
\begin{theorem}
	Let $f$ be a continuous function from a metric space $(X,d)$ to a metric space $(Y, \rho)$. Then the following statements are equivalent.
	\begin{enumerate}[label=(\alph*)]
		\item $f$ is CC-preserving.
		\item If a subset $A$ of $X$ is a uniformly locally compact in its relative topology such that $\overline{f(A)}$ is discrete in $Y$, then $\overline{f(A)}$ is uniformly locally compact in its relative topology. 
	\end{enumerate}
\end{theorem}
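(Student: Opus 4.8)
The plan is to route the whole proof through the local-compactness functional $\nu$ and two small lemmas. First, for a \emph{discrete} metric space $Z$, cofinal completeness is equivalent to uniform local compactness: uniform local compactness is exactly $\inf_{z}\nu(z)>0$, while the sequential $\nu$-criterion recalled earlier says $Z$ is cofinally complete iff every sequence with $\nu(z_n)\to 0$ clusters; in a discrete space a sequence clusters only by repeating a value, so $\inf\nu=0$ would yield distinct $z_n$ with $\nu(z_n)\to 0$ that cannot cluster, and conversely. Second, and this is the main lemma, \emph{every closed discrete subset $B$ of a cofinally complete space $A$ is uniformly locally compact}: since $cl_B(B_d(b,\epsilon)\cap B)$ is a closed subset of $cl_A(B_d(b,\epsilon)\cap A)$, compactness of the latter forces compactness of the former, whence $\nu_A(b)\le \nu_B(b)$; so if $B$ were not uniformly locally compact we would get distinct $b_k\in B$ with $\nu_A(b_k)\le\nu_B(b_k)\to 0$, and cofinal completeness of $A$ would force $(b_k)$ to cluster in $A$, contradicting that $B$ is closed and discrete.

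For $(a)\Rightarrow(b)$, suppose $f$ is CC-preserving and let $A$ be uniformly locally compact with $\overline{f(A)}$ discrete. Uniform local compactness gives $\delta>0$ with every closed $\delta$-ball of $A$ compact; a cofinally Cauchy sequence in $A$ has an infinite index set of diameter $<\delta$, so that subsequence lies in one compact ball and clusters, making $A$ a cofinally complete subspace. By CC-preservation $\overline{f(A)}$ is cofinally complete, and being discrete it is uniformly locally compact by the first observation. This direction is short.

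For $(b)\Rightarrow(a)$ I would argue by contraposition: assuming some cofinally complete subspace $A$ has $Z:=\overline{f(A)}$ not cofinally complete, I will manufacture a uniformly locally compact $A'\subseteq A$ whose image closure is discrete but not uniformly locally compact, contradicting $(b)$. By the $\nu$-criterion there is a sequence $(v_k)$ of distinct points of $Z$ with $\nu_Z(v_k)\to 0$ and no cluster point, so $V=\{v_k\}$ is closed and discrete. For each $k$ I would choose a radius $\rho_k\downarrow 0$ with $cl_Z(B(v_k,\rho_k))$ non-compact and, inside it, an infinite $\theta_k$-separated family $\{u_{k,m}\}_m$ with no cluster point; picking $b_{k,m}\in A$ with $f(b_{k,m})$ within $\theta_k/4$ of $u_{k,m}$, the set $W=\overline{\{f(b_{k,m})\}}$ is arranged to be closed and discrete, while each $v_k$-cluster is infinite, uniformly separated, and contained in a ball of radius $\to 0$, forcing $\nu_W\to 0$; thus $W$ is discrete but not uniformly locally compact. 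Finally $B=\{b_{k,m}\}$ has no cluster point in $A$ (continuity would send one to a cluster point of the closed discrete set $W$), so $B$ is closed and discrete in $A$ and hence uniformly locally compact by the main lemma. Taking $A'=B$ contradicts $(b)$, so $f$ is CC-preserving.

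The hard part is the middle of the last paragraph: making $W$ \emph{simultaneously} closed, discrete, and \emph{not} uniformly locally compact. Failure of uniform local compactness needs the clusters $\{u_{k,m}\}_m$ infinite and crowded near $v_k$, whereas discreteness needs every point isolated and the clusters mutually separated. When $\nu_Z(v_k)=0$ any positive $\rho_k$ gives a non-compact ball, so $\rho_k$ can be shrunk below the isolation of $v_k$ in $V$ and the clusters separate automatically; the delicate case is $\nu_Z(v_k)>0$, where $\rho_k$ cannot drop below $\nu_Z(v_k)$, and I would first pass to a uniformly separated subsequence of $(v_k)$—available because such a non-clustering sequence is not precompact—so that a uniform gap survives after fattening into clusters. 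Controlling this interaction between the forced cluster radii and the separation of the $v_k$, together with the mild completeness subtlety in extracting that separated subsequence, is where the real work lies.
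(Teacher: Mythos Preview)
Your lemmas and the $(a)\Rightarrow(b)$ direction match the paper exactly: uniformly locally compact implies cofinally complete, CC-preservation pushes this to $\overline{f(A)}$, and a discrete cofinally complete space is uniformly locally compact.

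For $(b)\Rightarrow(a)$ you take a genuinely different and much harder route than the paper. You start from the $\nu$-criterion, picking $(v_k)$ with $\nu_Z(v_k)\to 0$, and then try to build two-level families $\{u_{k,m}\}$ whose preimage set $B$ is uniformly locally compact while $W=\overline{f(B)}$ is discrete yet not uniformly locally compact. As you yourself note, the ``delicate case'' $\nu_Z(v_k)>0$ forces the cluster radii $\rho_k$ to stay above $\nu_Z(v_k)$, and reconciling this with mutual separation of the clusters (so that $W$ is genuinely discrete and closed) requires first extracting a uniformly separated subsequence of the $v_k$ and then a rather careful bookkeeping argument. This can be pushed through, but it is real work and your sketch does not actually carry it out.

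The paper sidesteps all of this by using the \emph{definition} of cofinal completeness rather than the $\nu$-criterion at the outset. If $\overline{f(A)}$ is not cofinally complete, take directly a cofinally Cauchy sequence $(z_n)$ of distinct points in $\overline{f(A)}$ with no cluster point. Choose $x_n\in A$ with $\rho(f(x_n),z_n)$ small enough that the $f(x_n)$ are distinct and $(f(x_n))$ is again cofinally Cauchy with no cluster point. Then $(x_n)$ has no cluster point in $A$ (continuity), so $E=\{x_n\}$ is closed and discrete in $A$, hence uniformly locally compact by \emph{your own} main lemma. Its image set $\overline{f(E)}=\{f(x_n)\}$ is already discrete, and the single sequence $(f(x_n))$ witnesses that $\overline{f(E)}$ is not cofinally complete, hence not uniformly locally compact. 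No two-index families, no cluster-radius tuning, no separated-subsequence extraction. The moral: starting from a cofinally Cauchy witness rather than a $\nu\to 0$ witness gives you, for free, a set whose image is discrete \emph{and} carries an explicit obstruction to uniform local compactness.
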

\begin{proof}
	$(a)\Rightarrow (b):$ Let $z \in \overline{f(A)}$. Suppose $\nu_{\overline{f(A)}}(z) = 0$, thus $I_{\overline{f(A)}}(y_n) =0$, but $\overline{f(A)}$ is discrete in $Y$, a contradiction. Thus, $\nu_{\overline{f(A)}}(z) > 0~\forall z \in \overline{f(A)}$.
	Now suppose $\overline{f(A)}$ is not uniformly locally compact in its relative topology. Therefore, $\exists$ a sequence $(y_n)$ in $\overline{f(A)}$ such that $\nu_{\overline{f(A)}}(y_n) < 1/n~\forall n \in \N$. Since $A$ is cofinally complete subset of $X$, $\overline{f(A)}$ is cofinally complete subset of $Y$. Thus, the sequence $(y_n)$ has a cluster point in $\overline{f(A)}$, a contradiction. Hence $\overline{f(A)}$ is not uniformly locally compact in its relative topology.
	
	$(b)\Rightarrow (a):$ Suppose $A$ is a cofinally complete subset of $X$ such that $\overline{f(A)}$ is not cofinally complete. Thus there exists a cofinally Cauchy sequence $(z_n)$ of distinct points in $\overline{f(A)}$ which does not cluster. Since $(z_n)$ has no cluster point, we can find $\epsilon_1 > 0$ such that $\rho(z_1, z_m) > \epsilon_1 ~\forall m > 1$. We can choose $x_1 \in A$ such that $\rho(f(x_1), z_1) < min\{\epsilon_1, 1 \}$. Suppose we have chosen $x_1, x_2,...x_n \in A$ such that for each $i$, $1\leq i \leq n$, $\rho(f(x_i), z_i) < \mbox{min}\{\epsilon_i, 1/i \}$ where $\rho(z_i, z_m) > \epsilon_i ~\forall m \neq i$. Choose $\epsilon_{n+1} > 0$ such that $\rho(z_{n+1}, z_m) > \epsilon_{n+1} ~\forall m \neq n+1$. Since $z_{n+1} \in \overline{f(A)}$, we can choose $x_{n+1} \in A$ such that $\rho(f(x_{n+1}), z_{n+1}) < min\{\epsilon_{n+1}, 1/{n+1} \}$. So by induction we can choose a sequence $(x_n)$ of distinct points in $A$ such that $f((x_n))$ has distinct points. \\
	Since $(z_n)$ does not cluster, $(x_n)$ also has no cluster point in $A$. Thus the set $E=\{x_n : n \in \N \}$ is a closed and discrete subset of $A$ in $X$. Thus $\nu_E(x_n) > 0~\forall n \in \N$. Since $E$ is a closed subset of $A$, $\nu_A(x) \leq \nu_E(x)$. Suppose there exists a sequence of distinct points $(a_n)$ in $E$ such that $\nu_E(a_n) < 1/n ~\forall n \in \N$, then $\nu_A(a_n) < 1/n ~\forall n \in \N$. Since $A$ is cofinally complete subset, $(a_n)$ has a cluster point, which is a contradiction. Hence, $E$ is uniformly locally compact in its relative topology such that $\overline{f(E)}$ is closed and discrete subset of $Y$. Thus, $\overline{f(E)}$ is uniformly locally compact in its relative topology, but $(f(x_n))$ is a cofinally Cauchy sequence of distinct points which does not cluster, consequently $\overline{f(E)}$ can not be uniformly locally compact in its relative topology, a contradiction.
\end{proof}
\begin{theorem}
	Let $(X,d)$ be a cofinally complete metric space and $(Y, \rho)$ be a metric space. Let $f$ be a continuous map from $(X, d)$ onto $(Y, \rho)$. Then the following statements are equivalent.
	\begin{enumerate}[label=(\alph*)]
		\item $(Y, \rho)$ is a cofinally complete space.
		\item For any sequence $(x_n)$ in $X$ with inf$~\nu(x_n) > 0$, either $(f(x_n))$ has a cluster point or the set $M=\{n \in \N : f(x_n) \in nlc(Y)\}$ is finite and inf $\{\nu(f(x_n)) : n \in \mathbb{N}\setminus M \} > 0$.
		\item $f$ is CC-preserving.
	\end{enumerate}
\end{theorem}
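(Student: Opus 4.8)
The plan is to organize the three equivalences into an easy pair $(a)\Leftrightarrow(c)$ and a substantive pair $(a)\Leftrightarrow(b)$, exploiting throughout the characterization recalled just before the theorem: a metric space is cofinally complete if and only if every sequence $(z_n)$ with $\lim_n \nu(z_n)=0$ clusters, together with the identification $f(x_n)\in nlc(Y)\iff \nu(f(x_n))=0$.

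I would dispatch $(a)\Leftrightarrow(c)$ first. For $(c)\Rightarrow(a)$, take $A=X$: since $X$ is cofinally complete it is a cofinally complete subspace of itself, and surjectivity gives $\overline{f(X)}=Y$, so CC-preservation forces $Y$ to be cofinally complete. For $(a)\Rightarrow(c)$ I would first record the elementary fact that a closed subspace of a cofinally complete space is again cofinally complete: a cofinally Cauchy sequence in the closed set is cofinally Cauchy in the ambient space (the condition only involves mutual distances), clusters there, and the cluster point stays in the closed set. Since $\overline{f(A)}$ is closed in $Y$, it is cofinally complete whenever $Y$ is, and this holds for every subspace $A$, so $f$ is CC-preserving. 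Note this half uses nothing about $A$ or about surjectivity.

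Next, $(a)\Rightarrow(b)$. Let $(x_n)$ be any sequence in $X$ (the hypothesis $\inf_n\nu(x_n)>0$ is not actually needed here) and suppose $(f(x_n))$ does not cluster. If $M=\{n:\nu(f(x_n))=0\}$ were infinite, then along $M$ we have $\nu(f(x_n))\equiv 0\to 0$, so cofinal completeness of $Y$ makes the subsequence $(f(x_n))_{n\in M}$ cluster, contradicting non-clustering of $(f(x_n))$; hence $M$ is finite. Likewise, if $\inf\{\nu(f(x_n)):n\notin M\}=0$ there is a subsequence along $\mathbb{N}\setminus M$ with $\nu(f(x_n))\to 0$, which again clusters by cofinal completeness of $Y$, a contradiction. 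Thus $M$ is finite and $\inf\{\nu(f(x_n)):n\notin M\}>0$, which is exactly $(b)$.

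Finally, $(b)\Rightarrow(a)$, which I expect to be the crux. I would verify the $\nu$-criterion for $Y$: let $(y_n)\subseteq Y$ satisfy $\nu(y_n)\to 0$ and, arguing by contradiction, assume $(y_n)$ does not cluster. Using surjectivity choose $x_n\in f^{-1}(y_n)$, and branch on $\liminf_n\nu(x_n)$. If $\liminf_n\nu(x_n)=0$, pass to a subsequence with $\nu(x_{n_k})\to 0$; cofinal completeness of $X$ makes it cluster, and continuity of $f$ transports the cluster point, so $(y_n)$ clusters, a contradiction. If $\liminf_n\nu(x_n)>0$, then some tail $(x_n)_{n\geq N}$ has $\inf_{n\geq N}\nu(x_n)>0$, so $(b)$ applies to this tail: either $(y_n)_{n\geq N}$ clusters (again contradicting non-clustering), or the associated index set $M$ is finite with $\inf\{\nu(y_n):n\geq N,\ n\notin M\}>0$; but $\nu(y_n)\to 0$ while $M$ is finite, so this infimum is $0$, a contradiction. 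The delicate point is precisely that we cannot prescribe the local-compactness behaviour of the lifts $x_n$, so the argument must split according to whether a subsequence of the lift drifts into the non-uniformly-locally-compact part of $X$ (controlled by cofinal completeness of $X$) or stays uniformly locally compact (controlled by hypothesis $(b)$). These four implications together yield the full equivalence.
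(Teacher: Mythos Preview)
Your proof is correct, but it is organized quite differently from the paper's. The paper proves the cyclic chain $(a)\Rightarrow(b)\Rightarrow(c)\Rightarrow(a)$; the nontrivial step is $(b)\Rightarrow(c)$, where they assume $\overline{f(A)}$ fails to be cofinally complete for some cofinally complete $A\subseteq X$, invoke the sequence construction from the preceding theorem to produce $(x_n)\subseteq A$ with no cluster point whose image is cofinally Cauchy and non-clustering, use cofinal completeness of $X$ to get $\inf_{n\notin M}\nu(x_n)>0$ for a finite $M$, and then contradict $(b)$. By contrast, you split off $(a)\Leftrightarrow(c)$ as essentially trivial via the observation that closed subspaces of cofinally complete spaces are cofinally complete, so once $Y$ is cofinally complete the CC-preserving condition is automatic; you then prove $(b)\Rightarrow(a)$ directly by lifting a sequence $(y_n)$ with $\nu(y_n)\to 0$ and splitting on $\liminf_n\nu(x_n)$. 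Your route is self-contained and does not depend on the construction in the previous theorem, and it makes transparent that $(c)$ is really equivalent to $(a)$ for elementary reasons having nothing to do with $(b)$; the paper's route, on the other hand, yields the sharper direct implication $(b)\Rightarrow(c)$ and showcases how the CC-preserving machinery interacts with $(b)$.
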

\begin{proof}
	$(a)\Rightarrow (b):$ This follows from the fact that a metric space $(X,d)$ is cofinally complete if and only if whenever $(x_n)$ is a sequence in $X$ with $\lim\limits_{n \to \infty} v(x_n) = 0$, then $(x_n)$ has a cluster point. 
	
	$(b)\Rightarrow (c):$ Suppose $A$ is a cofinally complete subset of $X$ such that $\overline{f(A)}$ is not cofinally complete. Choose the sequences $(z_n), (f(x_n))$ and $(x_n)$ as chosen in Theorem 5. Since $X$ is cofinally complete and $(x_n)$ has no cluster point, $M = \{n \in \N : \nu(x_n) = 0 \}$ is finite and  $\inf\limits_{n \notin M} \nu(x_n) > 0$. According to the hypothesis, inf $\{v(f(x_n)) : n \in \mathbb{N}\setminus M \} > 0$, which is not possible since $(f(x_n))$ is a cofinally Cauchy sequence which does not cluster. Hence $f$ is CC-preserving.
	
	$(c)\Rightarrow (a):$ This is immediate.
\end{proof}

\medskip

\section{Cofinal Completion}
We have defined the following terms to find equivalent characterisations of a metric space to have cofinal completion.
	\begin{definition}
		If x is an element of a metric space $(X,d)$ such that it has no totally bounded neighborhood, set $t(x)=0,$ otherwise, put $t(x)= sup\{\epsilon>0: B_d(x,\epsilon)\}$ is totally bounded$\}$. This geometric functional is called the local total boundedness functional on $X$. 
		\indent The set $\{x \in X : t(x) =0\}$ is the set of points of non-local total boundedness of $X$, which is denoted by $nlt(X)$. Thus a metric space is said to be locally totally bounded if $t(x) > 0~\forall x \in X$, while it is called uniformly locally totally bounded if inf$\{t(x) : x \in X\} > 0$.
	\end{definition}
	\begin{definition}
		Let $(X, d)$ be a metric space and $A$ be any subset of $X$. Then $\overline{t}(A)= sup\{t(x) : x \in A\}$ and $\underline{t}(A)= sup\{t(x) : x \in A\}$.
	\end{definition}
	\begin{definition}
		Let $(X,d)$ be a metric space. A real valued function $f$ on $X$ is said to be t-bounded if $\exists~r > 0$ such that $\{ f(x) : x \in X,~t(x) < r \}$ is bounded.
	\end{definition}
	\begin{definition}
		Let $(X,d)$ be a metric space. A real valued function $f$ on $X$ is said to be t-bounded on a proper subset $A$ of $X$ if~$\exists ~r>0$ such that the set $\{ f(x) : x \in A,~t(x) < r \}$ is bounded.
	\end{definition}
	\begin{definition}
		Let $(X,d)$ be a metric space and $A$ be a non-empty subset of $X$. Then $A$ is called nowhere locally totally bounded if $\forall \epsilon > 0$, the set $\{a\in A: t(a) \geq \epsilon\}$ is totally bounded.
	\end{definition}
	\begin{definition}
		Let $B$ and $C$ be disjoint subsets of $(X,d)$. Then $B$ and $C$ are said to be asymptotic if $\forall \epsilon > 0, \exists b\in B, c\in C$ with $d(b,c)< \epsilon$.
	\end{definition}
	\begin{lemma}
		Let $(X,d)$ be a metric space. If $X$ is not boundedly totally bounded, then $t: X \rightarrow [0,\infty)$ is uniformly continuous.
	\end{lemma}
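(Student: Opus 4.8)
The plan is to prove the stronger assertion that $t$ is $1$-Lipschitz, i.e. $|t(x)-t(y)| \leq d(x,y)$ for all $x,y \in X$; uniform continuity then follows at once. The hypothesis that $X$ is not boundedly totally bounded will enter in exactly one place, namely to guarantee that $t$ never takes the value $+\infty$, so that the difference $|t(x)-t(y)|$ is always meaningful.

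First I would establish the Lipschitz estimate at all points where $t$ is finite, using only the triangle inequality and heredity of total boundedness under subsets. The key observation is the inclusion $B_d(y, r - d(x,y)) \subseteq B_d(x,r)$, valid for any $x,y$ and any $r > d(x,y)$. Hence, if $r < t(x)$ (so that $B_d(x,r)$ is totally bounded, being a subset of a totally bounded ball of larger radius), then $B_d(y, r - d(x,y))$ is totally bounded as well, giving $t(y) \geq r - d(x,y)$. Taking the supremum over admissible radii $r$ with $d(x,y) < r < t(x)$ yields $t(y) \geq t(x) - d(x,y)$ whenever $t(x) < \infty$; the remaining case $t(x) \leq d(x,y)$ is trivial since $t(y) \geq 0$. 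By symmetry $t(x) \geq t(y) - d(x,y)$, so $|t(x) - t(y)| \leq d(x,y)$ at all finite points. Note that no separate treatment of the points of non-local total boundedness (where $t = 0$) is required, as they fall under the trivial case.

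Next I would rule out an infinite value of $t$ by invoking the hypothesis. Suppose $t(x_0) = +\infty$ for some $x_0 \in X$; then $B_d(x_0, r)$ is totally bounded for every $r > 0$. Given any bounded set $B \subseteq X$, fix $b \in B$ and observe $B \subseteq B_d\bigl(x_0,\, d(x_0,b) + \operatorname{diam}(B) + 1\bigr)$, whence $B$ is totally bounded. This forces every bounded subset of $X$ to be totally bounded, i.e. $X$ is boundedly totally bounded, contradicting the hypothesis. Therefore $t(x) < \infty$ for every $x \in X$, the Lipschitz estimate of the previous step holds globally, and $t$ is uniformly continuous.

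I do not anticipate a serious obstacle. The only points needing care are the bookkeeping in the ball-inclusion argument (the requirement $r - d(x,y) > 0$ is exactly what the trivial case absorbs) and the recognition that failure of bounded total boundedness is precisely the condition that excludes $t(x) = +\infty$. The substance of the proof is the elementary inclusion $B_d(y, r - d(x,y)) \subseteq B_d(x,r)$ together with the fact that a subset of a totally bounded set is totally bounded.
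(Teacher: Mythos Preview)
Your proof is correct and follows essentially the same approach as the paper's, namely the ball inclusion $B_d(y, r - d(x,y)) \subseteq B_d(x,r)$ together with heredity of total boundedness. Your version is slightly sharper (you obtain the $1$-Lipschitz bound directly, whereas the paper only shows $d(x,y) \leq \epsilon/2 \Rightarrow |t(x)-t(y)| \leq \epsilon$) and more explicit about where the hypothesis enters, namely to exclude $t(x) = +\infty$; the paper leaves this implicit in declaring the codomain to be $[0,\infty)$.
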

	\begin{proof}
		Let $\epsilon >0$ and $x, ~y \in X$ such that $d(x,y) \leq \frac{\epsilon}{2}$. We claim that $t(x) \leq t(y) + \epsilon$. Suppose $t(x)> t(y)+\epsilon$. Then there exists $r > t(y)+\epsilon$ such that $B(x, r)$ is totally bounded. Since $B(y, r- \frac{\epsilon}{2}) \subset B(x, r)$, $t(y) \geq r- \frac{\epsilon}{2}$. This implies $r > r+ \frac{\epsilon}{2}$, which is a contradiction. Thus, $t(x) \leq t(y) + \epsilon$. By interchanging the roles of $x$ and $y$, we get $|t(x)-t(y)| \leq \epsilon$.
	\end{proof}

	\begin{theorem}
		Let $(X,d)$ be a metric space. The following statements are equivalent:
		\begin{enumerate}[label=(\alph*)]
			\item The completion $(\widehat{X},d)$ of $(X,d)$ is cofinally complete.
			\item Whenever $(x_n)$ is a sequence in $X$ with $\lim\limits_{n \to \infty} t(x_n) = 0$, then $(x_n)$ has a Cauchy subsequence.
			\item The set $nlt(X)$ is totally bounded and for any sequence $(x_n)$ in $X \setminus nlt(x)$ with no Cauchy subsequence, inf $t(x_n) > 0$.
			\item Let $A$ be a subset of $X$ such that no infinite subset of $X$ is totally bounded, then $ A \cap nlt(X)$ is finite and  $inf \{ t(x): x\in A\setminus nlt(X)\} > 0$.
			\item For any completely discrete subset $A$ of $X$, $A \cap nlt(X)$ is finite and $inf \{ t(x): x\in A\setminus nlt(X)\} > 0$.
			\item Let $(x_n)$ be a sequence in $(X,d)$ with no Cauchy subsequence, then $\exists~ n_0 \in \mathbb{N}$ such that $inf \{ t(x_n): n \geq n_0 \} > 0$.
			\item The set $nlt(X)$ is totally bounded and if $U$ is any infinite complete subset of $X$ disjoint from $nlt(X)$, then $U$ is uniformly locally totally bounded.
		\end{enumerate}
	\end{theorem}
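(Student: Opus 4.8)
The plan is to make everything pivot on the sequential condition (b), which I regard as the computational heart of the theorem, and to read the remaining statements as successive repackagings of it. First I would set up the bridge between the functional $t$ on $X$ and the local compactness functional $\nu$ on the completion $\widehat{X}$. Because $\widehat{X}$ is complete, a ball in $\widehat{X}$ has compact closure precisely when it is totally bounded; and since $X$ is dense in $\widehat{X}$, the set $B_d(x,\epsilon)\cap X$ is dense in $B_d(x,\epsilon)$ and hence has the same total boundedness. Consequently $\nu_{\widehat{X}}(x)=t(x)$ for every $x\in X$. Invoking the recalled characterization---$\widehat{X}$ is cofinally complete iff every sequence with $\lim \nu(\hat{x}_n)=0$ clusters---together with the observation that a sequence from $X$ clusters in $\widehat{X}$ exactly when it has a Cauchy subsequence, I would obtain $(a)\Leftrightarrow(b)$: for the forward direction apply the characterization directly to sequences in $X$; for the converse, given $(\hat{x}_n)$ in $\widehat{X}$ with $\nu(\hat{x}_n)\to 0$, pick $x_n\in X$ with $d(\hat{x}_n,x_n)<1/n$, use the continuity of $\nu$ (the estimate of the Lemma applied to $\widehat{X}$) to get $t(x_n)\to 0$, apply (b) to extract a Cauchy subsequence, and note its limit is a cluster point of $(\hat{x}_n)$.

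Next I would dispose of the purely sequential reformulation (f). The contrapositive of (b) says a sequence with no Cauchy subsequence cannot have $t(x_n)\to 0$; a routine subsequence argument upgrades this to (f), since if some tail failed to be bounded away from $0$ one could thin out to a subsequence with $t\to 0$ that still has no Cauchy subsequence, contradicting (b). Conversely (f) trivially forbids $t(x_n)\to 0$ in the absence of a Cauchy subsequence, so $(b)\Leftrightarrow(f)$. Along the way I would record the structural consequence used repeatedly below: if $nlt(X)$ were not totally bounded it would contain a sequence of distinct points with no Cauchy subsequence on which $t\equiv 0$, violating (b); hence (b) forces $nlt(X)$ to be totally bounded.

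With $nlt(X)$ totally bounded in hand, the set-theoretic statements (c), (d), (e) become translations of (f) obtained by splitting a sequence according to membership in $nlt(X)$. Since $nlt(X)$ is totally bounded, any sequence with no Cauchy subsequence meets $nlt(X)$ only finitely often, so its tail lies in $X\setminus nlt(X)$ where (f)/(c) supply a positive lower bound for $t$; reading this for the set $A=\{x_n\}$ and using the equivalence ``$A$ has no infinite totally bounded subset'' $\Leftrightarrow$ ``every injective sequence in $A$ has no Cauchy subsequence'' gives $(c)\Leftrightarrow(d)$, while (e) is the same statement restricted to completely discrete sets and is handled by taking $A$ to be the range of the offending sequence. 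For (g) I would again split off the totally bounded part $nlt(X)$ and analyze an infinite complete $U$ disjoint from $nlt(X)$: if $U$ failed to be uniformly locally totally bounded there is $(u_n)$ in $U$ with the total-boundedness functional tending to $0$, and either it has a Cauchy subsequence---whose limit lies in $U$ by completeness and in $nlt(X)$ by continuity of $t$, contradicting disjointness---or it has none, whereupon (c) bounds $t$ away from $0$, again a contradiction; the reverse implication takes $U$ to be the range of a sequence of distinct points with no Cauchy subsequence, which is automatically complete.

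The step I expect to be most delicate is the functional bookkeeping in (g), namely reconciling the local total-boundedness functional computed inside the subspace $U$ with the ambient functional $t$ on $X$. One has $B_U(x,\epsilon)\subseteq B_d(x,\epsilon)$, which gives one inequality immediately but not the reverse, so some care is needed to phrase ``uniformly locally totally bounded'' for $U$ in terms that genuinely interact with the ambient $t$ (and to use the completeness of $U$, i.e.\ its closedness in $\widehat{X}$, to pin down limits of Cauchy subsequences). Verifying the continuity input from the Lemma in the borderline boundedly totally bounded case, and confirming that the compactness-versus-total-boundedness identification in $\widehat{X}$ survives on the relevant balls, are the other points I would check with care; the remainder is bookkeeping with subsequences.
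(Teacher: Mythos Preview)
Your proposal is correct, but it takes a genuinely different route from the paper for the pivotal link with (a). The paper argues in a cycle $(a)\Rightarrow(b)\Rightarrow\cdots\Rightarrow(g)\Rightarrow(a)$, and the two substantial steps are done by hand: for $(a)\Rightarrow(b)$ it builds, from a sequence with $t(x_n)\to 0$ and no Cauchy subsequence, an explicit cofinally Cauchy sequence in $\widehat{X}$ with no cluster point (interleaving non-Cauchy sequences drawn from small balls $B(x_n,r_n)$ along a partition of $\mathbb{N}$); for $(g)\Rightarrow(a)$ it projects a bad cofinally Cauchy sequence from $\widehat{X}$ to $X$ and uses uniform local total boundedness to force a totally bounded infinite subset. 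You bypass both constructions by proving the clean identity $t(x)=\nu_{\widehat{X}}(x)$ for $x\in X$ (density of $X$ in $\widehat{X}$ plus ``compact $=$ complete $+$ totally bounded'' on balls) and then reading $(a)\Leftrightarrow(b)$ directly off Beer's characterization of cofinal completeness via $\nu$. This is more conceptual and shorter; what the paper's approach buys is self-containment, since it does not need to import the $\nu$-characterization or verify the $t=\nu_{\widehat{X}}$ identity. The intermediate equivalences $(b)\Leftrightarrow(f)$ and the passages through (c), (d), (e) are handled essentially the same way in both---the paper labels several of them ``immediate'' and your subsequence bookkeeping matches.

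On your flagged worry about (g): it dissolves once you read the statement as the paper does. In both the proof of $(f)\Rightarrow(g)$ and of $(g)\Rightarrow(a)$ the paper writes ``$t(x_n)<1/n$'' and ``$\alpha=\inf\{t(y_n)\}>0$'' with the \emph{ambient} functional $t$, not a subspace functional $t_U$; so ``$U$ is uniformly locally totally bounded'' in (g) means $\inf\{t(u):u\in U\}>0$. With that reading your argument for $(b)\Rightarrow(g)$ (extract $(u_n)$ with $t(u_n)\to 0$, get a Cauchy subsequence, land in $U$ by completeness, land in $nlt(X)$ by continuity of $t$, contradiction) and for $(g)\Rightarrow(f)$ (range of the tail outside $nlt(X)$ is complete and disjoint from $nlt(X)$, so $\inf t>0$) go through without any $t$ versus $t_U$ comparison. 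The boundedly totally bounded borderline is harmless: in that case $t\equiv\infty$ on $X$ and $\nu\equiv\infty$ on $\widehat{X}$, so every condition holds vacuously.
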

	\begin{proof}
		$(a)\Rightarrow (b):$ Suppose the set $\{x_n : n \in \N\}$ is not totally bounded. Therefore, $\exists \epsilon > 0$ and a subsequence $(x_{n_k})$ of $(x_n)$ such that the open balls $B(x_{n_k}, \epsilon)$ are pairwise disjoint. Rename the subsequence by $(x_n)$. Clearly, $\lim\limits_{n \to \infty} t(x_n) = 0$. By passing to a subsequence, we can assume $t(x_n) <$ min$\{\epsilon, \frac{1}{n}\}~\forall n \in \N$. Let $r_n$ = min$\{\epsilon, \frac{1}{n}\}$ $\forall n \in \N$. Thus there exists a sequence $(w_j^n)_{j \in \N}$ in $B(x_n, r_n)$ with no Cauchy subsequence. Now consider a partition $\{M_n: n \in \N\}$ of $\N$ where each $M_n$ is an infinite subset of $\N$. Define a sequence $(y_j)$ where $y_j = w_j^n$ for $j \in M_n$. Then $(y_j)$ is a cofinally Cauchy sequence in $(\widehat{X},d)$ which does not cluster, a contradiction.
		
		$(b)\Rightarrow (c):$ Clearly, the set $nlt(X)$ is totally bounded. Let $(x_n)$ be in $X \setminus nlt(x)$ with no Cauchy subsequence such that $inf\{x_n : n \in \N \} = 0$, i.e., there exists a subsequence $(x_{n_k})$ of $(x_n)$ such that $\lim\limits_{k \to \infty} t((x_{n_k})) = 0$ but it has no cluster point, a contradiction.
		
		$(c)\Rightarrow (d):$ This is immediate.
		
		$(d)\Rightarrow (e):$ Let $B$ be any infinite subset of $A$. Let $(x_n)$ be a sequence of distict points in $B$. Suppose $(x_n)$ has a Cauchy subsequence $(x_{n_k})$. Since $X$ is complete, there exists $x \in A$ such that $(x_{n_k})$ converges to $x$, which is a contradiction as $A$ is a discrete set. Thus, no infinte subset of $A$ is totally bounded.
		
		$(e)\Rightarrow (f):$ This is immediate.
		
		$(f)\Rightarrow (g):$ Suppose $U$ is not uniformly locally totally bounded. Therefore, there exists a sequence $(x_n) \in U$ such that $t(x_n) < \frac{1}{n} ~\forall n \in \N$. According to the hypothesis, $(x_n)$ has a Cauchy susbsequence say $(x_{n_k})$. Since $U$ is complete, $(x_{n_k})$ converges to some point $x \in U$, by Lemma 1,  $t(x)=0$. Thus we got contradiction as $U$ is disjoint from $nlt(X)$.
		
		$(g)\Rightarrow (a):$ Suppose $(\hat{x}_n)$ be a cofinally Cauchy sequence in $(\widehat{X},d)$ with no Cauchy subsequence. Then there exists a sequence $(x_n) \in X$ such that $d(\hat{x}_n, x_n) < \frac{1}{n}~\forall n \in \N$. Consequently, $(x_n)$ is cofinally Cauchy in $(X,d)$ with no Cauchy subsequence and hence the set $A=\{x_n : n \in \N\}$ is complete. Since $(x_n)$ has no Cauchy subsequence, $nlt(X) \cap A$ is a finite set, say $B$. Let $U= A\setminus B$. Let $\{y_n : n \in \N \}$  be an enumeration of the elements of $U$. According to the hypothesis, $\alpha = inf\{ t(y_n) : n \in \N\} > 0$. Clearly, $(y_n)$ is a cofinally Cauchy sequence. For $\alpha/2 > 0$, there exists an infinite subset $A$ of $\N$ such that $d(y_n, y_m) < \alpha/2~~\forall~n, m \in A$. Since $t(y_n) > \alpha ~~\forall n \in \N$, we arrive at a contradiction.
	\end{proof}
	\begin{theorem}
		Let $(X,d)$ be a metric space. The following statements are equivalent:
		\begin{enumerate}[label=(\alph*)]
			\item The completion $(\widehat{X},d)$ of $(X,d)$ is cofinally complete.
			\item Let $f: X \rightarrow \mathbb{R}$ be a Cauchy continuous function. Then there exists $n_0 \in \mathbb{N}$ such that every point of the set $A = \{x : |f(x)| \geq n_0 \}$ has totally bounded neighbourhood and $inf \{ t(x): x\in A\} > 0$ 
			\item Every real valued Cauchy continuous function on $(X,d)$ is t-bounded.
			\item Every real-valued Cauchy continous function on $(X,d)$ is t-bounded on every proper subset of $X$.
		\end{enumerate}
	\end{theorem}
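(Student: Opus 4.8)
The plan is to prove the cyclic chain $(a)\Rightarrow(b)\Rightarrow(c)\Rightarrow(d)\Rightarrow(a)$. The first thing I would record is that conditions $(b)$ and $(c)$ are merely contrapositive reformulations of the single assertion that a given Cauchy continuous $f$ is $t$-bounded. Indeed, if $r:=\inf\{t(x):x\in A\}>0$ for $A=\{x:|f(x)|\ge n_0\}$, then $t(x)<r$ forces $x\notin A$, hence $|f(x)|<n_0$, so $\{f(x):t(x)<r\}$ is bounded; conversely $t$-boundedness with constant $r$ gives $\inf_A t\ge r>0$ for $A=\{|f|\ge n_0\}$ once $n_0$ exceeds the bound. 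Note also that $\inf_A t>0$ automatically forces every point of $A$ to have a totally bounded neighbourhood, so the two clauses in $(b)$ collapse to the single inequality $\inf_A t>0$. Consequently $(b)\Leftrightarrow(c)$ holds for each fixed $f$, and $(c)\Rightarrow(d)$ is immediate, since $t$-boundedness on $X$ restricts to $t$-boundedness on any subset.

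For $(a)\Rightarrow(b)$ I would argue by contradiction using the preceding theorem, whose equivalence $(a)\Leftrightarrow(b)$ states that $\widehat X$ is cofinally complete iff every sequence $(x_n)$ with $t(x_n)\to 0$ has a Cauchy subsequence. If the conclusion of $(b)$ fails for some Cauchy continuous $f$, then for every $n$ the set $\{x:|f(x)|\ge n\}$ has $\inf t=0$, so I can pick $x_n$ with $|f(x_n)|\ge n$ and $t(x_n)<1/n$. Then $t(x_n)\to 0$, so by the preceding theorem $(x_n)$ has a Cauchy subsequence $(x_{n_k})$; Cauchy continuity of $f$ makes $(f(x_{n_k}))$ Cauchy, hence bounded, contradicting $|f(x_{n_k})|\ge n_k\to\infty$. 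These same three lines in fact prove $(a)\Rightarrow(c)$ directly, a convenient shortcut.

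The main work is $(d)\Rightarrow(a)$, which I would prove in contrapositive form. Assuming $\widehat X$ is not cofinally complete, the preceding theorem furnishes a sequence with $t\to 0$ and no Cauchy subsequence; since the set of its terms is not totally bounded, I can pass to a subsequence $(x_n)$ that is $\epsilon$-separated for some $\epsilon>0$ while keeping $t(x_n)\to 0$. The set $A=\{x_n:n\in\N\}$ is then closed and uniformly discrete, and I would build the tent function $f(x)=n\max\{0,\,1-\tfrac{3}{\epsilon}d(x,x_n)\}$ on each $B(x_n,\epsilon/3)$ and $f\equiv 0$ off $\bigcup_n B(x_n,\epsilon/3)$; since the supports are pairwise $(\epsilon/3)$-separated, $f$ is well defined and continuous.

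The delicate step is verifying that this $f$ is Cauchy continuous: a Cauchy sequence $(z_k)$ eventually has diameter below $\epsilon/3$, so its tail meets at most one support $B(x_{n_0},\epsilon/3)$, on which $f$ coincides with the single Lipschitz tent $f_{n_0}$ extended by zero; being Lipschitz, $f_{n_0}$ is Cauchy continuous, so $(f(z_k))$ is Cauchy. Finally $f(x_n)=n$ is unbounded while $t(x_n)\to 0$, so $f$ is not $t$-bounded on $A$; and $A$ is proper, because $A=X$ would make $X$ uniformly discrete, hence complete with $\widehat X=X$ cofinally complete, contradicting our assumption. This violates $(d)$ and closes the cycle. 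I expect the Cauchy-continuity check of the tent function — keeping the separation constants consistent so that every Cauchy tail is confined to a single support — to be the principal technical obstacle; the remaining implications are essentially formal once the preceding theorem is invoked.
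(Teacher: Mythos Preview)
Your proof is correct, but it follows a genuinely different route from the paper's in both of the substantive implications.

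For $(a)\Rightarrow(b)$, the paper extends $f$ to a continuous $\hat f$ on $\widehat X$, invokes an analogous cofinal-completeness result for $\widehat X$ phrased via the local-compactness functional $\hat\nu$, and then proves the pointwise comparison $t(x)\le\hat\nu(x)$ to transfer the bound. You instead appeal directly to the sequential criterion of the preceding theorem (every sequence with $t(x_n)\to 0$ has a Cauchy subsequence) and get a two-line contradiction from Cauchy continuity; this is shorter and avoids both the extension step and the $t$ vs.\ $\hat\nu$ comparison. For $(d)\Rightarrow(a)$, the paper starts from a cofinally Cauchy sequence in $\widehat X$ with no cluster point, pulls it back to $X$, defines $f(x_n)=n$ on the resulting complete discrete set, and then uses a Cauchy-continuous extension theorem to globalize $f$; the cofinal Cauchyness is what produces infinitely many $x_n$ with $t(x_n)<r$. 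You instead extract an $\epsilon$-separated subsequence from a sequence with $t\to 0$ and no Cauchy subsequence, and build an explicit tent function whose Cauchy continuity you verify by hand via the uniform separation of supports. Your approach is more self-contained (no extension theorem needed) and leans squarely on the immediately preceding theorem; the paper's approach is slightly more conceptual but imports more machinery. Both work; your identification of the Cauchy-continuity check as the only delicate point is accurate, and your handling of the ``$A$ proper'' edge case is a nice touch the paper does not explicitly address.
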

	\begin{proof}
		$(a)\Rightarrow (b):$ Suppose $\forall n \in \N, ~\exists$ $x_n \in X$ such that $|f(x_n)| \geq n$ and $t(x_n)=0$. Since $(\widehat{X},d)$ is cofinally complete, $(x_n)$ must have a Cauchy subsequence which is a contradiction because $f$ is Cauchy continuous and every susbsequence of $(f(x_n))$ is unbounded.
		Thus, there exists $n_o \in \N$ such that the set $A=\{x:  |f(x)| \geq n_o\}$ is locally totally bounded.
		\\Since $f$ is a Cauchy continuous function from $X$ to $\mathbb{R}$, we can extend it to a function $\hat{f}$ from $\widehat{X}$ to $\mathbb{R}$ such that $\hat{f}$ is Cauchy continuous, thus continuous. Since $(\widehat{X},d)$ is cofinally complete, $\exists n_o' \displaystyle{n_{o}^{'}}$ such that for $A'=\{x \in \widehat{X} : |\hat{f}(x)| \geq \displaystyle{n_{o}^{'}} \}$, $A' \bigcap nlc(\widehat{X}) = \phi$ and $inf\{\hat{\nu}(x) : x \in A'\} > 0$. Let $m$= max$\{n_o, \displaystyle{n_{o}^{'}}\}$, $A=\{x \in X : |f(x)| \geq m\}$ and $A'=\{x \in \widehat{X} : |\hat{f}(x)| \geq m\}$. Note that $A \subseteq A'$. We claim that $\inf \{t(x): x \in A\} >0$. Suppose the claim is not true, therefore $\exists (x_n) \in A$ such that $\lim\limits_{n \to \infty} t(x_n) = 0$.\\
		Let $x \in X$ and let $t(x) < \alpha$. Therefore, there exists a sequence $(y_n) \in B(x, \alpha)$ which does not have any Cauchy subsequence . Therefore, $(y_n)$ does not have any cluster point in $(\widehat{X},d)$, which implies $\hat{\nu}(x) \leq \alpha$.\\
		Since $\lim\limits_{n \to \infty} t(x_n) = 0$, $\lim\limits_{n \to \infty} \hat{\nu}(x_n) = 0$. It is contradiction to the fact that $inf\{\hat{\nu}(x) : x \in A'\} > 0$. Thus, $\inf \{t(x): x \in A\} >0$.
		
		$(b)\Rightarrow (c):$ Let $f: (X,d) \longrightarrow (\mathbb{R}, |.|)$ be a real-valued function. Therefore, $\exists n_o \in \N$ such that for $A=\{x \in X : |f(x)| \geq n_o\}$, $inf\{t(x) : x \in A\} > 0$. Let $inf\{t(x) : x \in A\} = r$. Now, let $B$ be any subset of $X$. Then $\{f(x) : x \in B, t(x) < r\}$ is bounded because $t(x) < r$ implies $x \notin A$, thus $|f(x)| < n_o$.
		
		$(c)\Rightarrow (d):$ This is immediate.
		
		$(d)\Rightarrow (a):$ Suppose $(\hat{x}_n)$ be a cofinally Cauchy sequence in $(\widehat{X},d)$ with no Cauchy subsequence. Then there exists a sequence $(x_n) \in X$ such that $d(\hat{x}_n, x_n) < \frac{1}{n}~\forall n \in \N$. Consequently, $(x_n)$ is cofinally Cauchy in $(X,d)$ with no Cauchy subsequence and hence the set $A=\{x_n : n \in \N\}$ is complete. Since $(x_n)$ has no Cauchy subsequence, we can assume the sequence $(x_n)$ consists of distinct elements. Consider the function $f: A \longrightarrow \mathbb{R}$ where $f(x_n) = n.$ Clearly $f$ is a Cauchy continuous function. Let $g$ be its extension to the whole space $X$ such that $g$ is Cauchy continuous. Let $B = A\setminus \{x\}$. Therefore, $\exists r > 0$ such that the set $\{g(x) : x \in B, t(x) < r\}$ is bounded. Therefore, $|g(x)| \leq M~\forall x \in B$ and $t(x) < r$. Since $(x_n)$ is cofinally Cauchy sequence, $\exists$ infinite subset $\N_r$ of $\N$ such that $d(x_n, x_m) < r~~\forall n, m \in \N_r$, thus $t(x_n) < r~\forall n \in \N_r$. Hence, $|g(x_n)| \leq M~\forall n \in \N_r$. Since $(x_n)$ is a sequence of distinct points and $\N_r$ is an infinite set, we get contradiction.
	\end{proof}
	\begin{theorem}
		Let $(X,d)$ be a metric space. The following statements are equivalent:
		\begin{enumerate}[label=(\alph*)]
			\item The completion $(\widehat{X},d)$ of $(X,d)$ is cofinally complete.
			\item Whenever $F_1$ and $F_2$ are disjoint asymptotic subsets of $X$ such that $F_1$ is closed and $F_2$ is complete, there exists a $\delta > 0$ such that $F_1 \cap \{ x~\in X : t(x) > \delta  \}$ and $F_2 \cap \{ x~\in X : t(x) > \delta  \}$ are asymptotic.
			\item Whenever $F_1$ and $F_2$ are disjoint asymptotic complete subsets of $X$, there exists a $\delta > 0$ such that $F_1 \cap \{ x \in X : t(x) > \delta  \}$ and $F_2 \cap \{ x \in X : t(x) > \delta  \}$ are asymptotic.
			\item Whenever $B$ and $C$ are non empty disjoint complete subsets of $X$ such that $C$ is almost nowhere totally bounded, then $d(B,C) > 0 $.
			\item Every nonempty complete and almost nowhere locally totally bounded subset of $X$ has countable character, that is, for every nonempty complete and almost nowhere locally totally bounded subset $A$ of $X$ and for every open set $G$ containing $A$, there exists $r>0$ such that $A\subseteq B(A,r)\subseteq G$.
			\item  Whenever $B$ and $C$ are non empty disjoint closed subsets of $X$ such that $C$ is complete and almost nowhere totally bounded, then $d(B,C) > 0 $.
			\item Whenever $B$ and $C$ are non empty disjoint subsets of $X$ such that $B$ is complete and $C$ is closed and almost nowhere totally bounded, then $d(B,C) > 0 $.
			\item Whenever $F_n$ is a decreasing sequence of non-empty complete subsets of $X$ with $\lim\limits_{n \to \infty} \underline{t}(F_n) = 0$, then $\bigcap \{F_n: n\in\mathbb{N}\}$ is non empty.
			\item Whenever $F_n$ is a decreasing sequence of non-empty closed subsets of $X$ such that for some m $\in \mathbb{N}$, $F_m$ is complete and $\lim\limits_{n \to \infty} \underline{t}(F_n) = 0$, then $\bigcap \{F_n: n\in\mathbb{N}\}$ is non empty.
			\item Whenever $F_n$ is a decreasing sequence of non-empty closed subsets of $X$ such that for some m $\in \mathbb{N}$, $F_m$ is complete and $\lim\limits_{n \to \infty} \overline{t}(F_n) = 0$, then $\bigcap \{F_n: n\in\mathbb{N}\}$ is non empty.
			\item Whenever $F_n$ is a decreasing sequence of non-empty complete subsets of $X$ with $\lim\limits_{n \to \infty} \overline{t}(F_n) = 0$, then $\bigcap \{F_n: n\in\mathbb{N}\}$ is non empty.
			\item For every sequence $(A_n)$ of closed sets in $X$ such that for some $k\in \mathbb{N}$, $A_k$ is complete in $X$ and for some $t\in \mathbb{N},~ (k\neq t)$, $A_t$ is almost nowhere locally totally bounded, if $\bigcap\limits_{n=1}^{\infty}A_n=\emptyset$, then there exists $r>0$ such that $\bigcap\limits_{n=1}^{\infty}B(A_n,r)=\emptyset$.
			\item  For every sequence of complete sets $(A_n)$ in $X$ such that for some $k\in \mathbb{N}$, $A_k$ is almost nowhere locally totally bounded in $X$ and $\bigcap\limits_{n=1}^{\infty}A_n=\emptyset$, there exists $r>0$ such that $\bigcap\limits_{n=1}^{\infty}B(A_n,r)=\emptyset$.
			\item  For every sequence $(A_n)$ of closed sets in $X$ such that for some $k\in \mathbb{N}$, $A_k$ is complete and almost nowhere locally totally bounded and $\bigcap\limits_{n=1}^{\infty}A_n=\emptyset$, then there exists $r>0$ such that $\bigcap\limits_{n=1}^{\infty}B(A_n,r)=\emptyset$.
		\end{enumerate}
	\end{theorem}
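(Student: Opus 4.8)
The plan is to treat (a) as the hub and reduce every other item to it, rather than threading a single cycle through all fourteen statements. The key tool is the first theorem of this section, which recasts (a) as a sequential statement about the local total boundedness functional $t$: the completion $(\widehat X,d)$ is cofinally complete exactly when $nlt(X)$ is totally bounded and every sequence in $X$ with no Cauchy subsequence has a tail on which $\inf t>0$ (equivalently $\liminf_n t(x_n)>0$ for such sequences). I would also record once the bridge between $t$ on $X$ and the local compactness functional on the completion: since the completion of a totally bounded set is compact, a point $x$ has a totally bounded $d$-ball iff it has a compact ball in $\widehat X$, so $t(x)\leq\widehat\nu(x)$ and $t(x_n)\to 0$ forces the $x_n$ not to cluster in $\widehat X$. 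This estimate, already used earlier in this section, is what lets me pass between ``$t$ small in $X$'' and ``fails to cluster in $\widehat X$.''

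Next I would organise the statements into four blocks: the distance/asymptotic block $\{(b),(c),(d),(f),(g)\}$, the countable-character statement $(e)$, the Cantor-intersection block $\{(h),(i),(j),(k)\}$, and the enlargement block $\{(l),(m),(n)\}$. Inside each block the implications are cheap, resting only on \emph{complete} $\Rightarrow$ \emph{closed} and on $\underline t(A)\leq\overline t(A)$ (so a hypothesis phrased with $\overline t\to0$ is stronger than the same hypothesis with $\underline t\to 0$, and a hypothesis on complete sets specialises one on closed sets). Thus $(d)\Leftrightarrow(f)\Leftrightarrow(g)$ and $(b)\Leftrightarrow(c)$ are direct, the comparisons among $(h),(i),(j),(k)$ follow from the two monotonicities, and $(l),(m),(n)$ reduce to one another by specialising the complete/closed hypotheses. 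This leaves only one representative of each block, say $(d)$, $(h)$, $(l)$, to be tied to (a), together with $(b)$ and $(e)$.

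For the forward directions ``(a) $\Rightarrow$ block'' I would argue by contradiction, always extracting from the negated conclusion a sequence that violates the sequential form of (a). The common engine is: if $C$ is infinite, complete and almost nowhere locally totally bounded, then each $\{c\in C:t(c)\geq\epsilon\}$ is totally bounded, so any sequence of distinct points of $C$ with no Cauchy subsequence satisfies $t(c_n)\to 0$, while completeness of $C$ forbids a clustering subsequence. Feeding in $d(B,C)=0$ (for $(d),(f),(g)$) or asymptoticity (for $(b),(c)$) lets me interleave approximants from $B$ with this bad sequence in $C$ to build a single cofinally Cauchy, non-clustering sequence in $\widehat X$, contradicting (a); for the intersection blocks, choosing $x_n\in F_n$ with $t(x_n)\to 0$ (resp.\ a non-clustering transversal of the $A_n$) gives the same contradiction, and the ``$B(A_n,r)$ meet for all $r$'' phrasing of $(l),(m),(n)$ is precisely the metric restatement of non-clustering. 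For the reverse directions I start from a failure of (a): the first theorem supplies a sequence $(x_n)$ of distinct points, cofinally Cauchy in $X$, with no Cauchy subsequence and $\liminf_n t(x_n)=0$. Writing $A=\{x_n:n\in\N\}$, which is closed, discrete and (after discarding its finite trace in $nlt(X)$) almost nowhere locally totally bounded, I manufacture the counterexample for whichever block I am inverting: split $A$ along its cofinal $\epsilon$-clusters into disjoint complete $B,C$ at distance $0$ for $(d)$; take $F_n=\overline{\{x_k:k\geq n\}}$ for $(h)$--$(k)$; take $A_n=\{x_k:k\geq n\}$ with $\bigcap A_n=\emptyset$ but $\bigcap B(A_n,r)\neq\emptyset$ for all $r$ for $(l)$--$(n)$; and use $A$ itself, a complete almost nowhere locally totally bounded set that fails countable character, for $(e)$.

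The main obstacle is not any individual implication but keeping the construction uniform across blocks while respecting the bookkeeping forced by $\underline t$ versus $\overline t$ and by ``almost nowhere totally bounded'' versus ``almost nowhere locally totally bounded.'' The genuinely delicate analytic point is the forward direction for the refined asymptotic statements $(b),(c)$: there one must show not merely that asymptotic sets are unseparated, but that their traces on $\{t>\delta\}$ stay asymptotic, i.e.\ that the ``closeness'' of $F_1,F_2$ is realised on the locally totally bounded part rather than on the bad $nlt$-part. This is exactly where the estimate $t\leq\widehat\nu$ and the finiteness of $A\cap nlt(X)$ must be combined, and I would expect the cleanest write-up to prove $(a)\Leftrightarrow(d)$, $(a)\Leftrightarrow(b)$, $(a)\Leftrightarrow(e)$, $(a)\Leftrightarrow(h)$ and $(a)\Leftrightarrow(l)$ in full, and then dispatch the remaining nine statements by the elementary comparisons noted above.
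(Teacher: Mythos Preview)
Your hub-and-spoke organisation is a genuine alternative to the paper's approach, which instead threads a single long cycle $(a)\Rightarrow(b)\Rightarrow(c)\Rightarrow\cdots\Rightarrow(k)\Rightarrow(a)$ and then closes the remaining three items via the side branches $(a)\Rightarrow(l)\Rightarrow(m)\Rightarrow(d)$ and $(a)\Rightarrow(n)\Rightarrow(m)$. The paper's cycle contains some non-obvious links that your plan bypasses entirely: in particular $(c)\Rightarrow(d)$ (extracting a positive-distance conclusion from the asymptotic-trace hypothesis) and especially $(g)\Rightarrow(h)$ (where, starting from a decreasing family with $\underline t(F_n)\to 0$, one must first pick $x_n\in F_n$ with $t(x_n)\to 0$, then---if this sequence has no Cauchy subsequence---spread it out to a $\delta$-discrete subsequence $(a_n)$ and build a companion set $E=\{e_n\}$ with $d(a_n,e_n)\to 0$ so that the pair $A=\{a_n\},E$ violates $(g)$). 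Your scheme avoids these inter-block contortions by routing everything through the sequential criterion of the preceding theorem, at the cost of having to run the same two templates (forward: a hypothetical counterexample yields a sequence with $t\to 0$ and no Cauchy subsequence; reverse: the bad cofinally Cauchy sequence with $t(x_n)\to 0$ supplies the required counterexample) across more statements. You are right that $(a)\Rightarrow(b)$ is the only genuinely delicate forward step; the paper handles it exactly as you sketch, via uniform continuity of $t$ and completeness of $F_2$.

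One caution on your bookkeeping: the claim that the intra-block implications are all ``direct'' from complete $\Rightarrow$ closed and $\underline t\leq\overline t$ is not quite right. Within the distance block, $(f)$ and $(g)$ are incomparable (one relaxes $B$ to closed, the other relaxes $C$), and within the enlargement block $(l)$ and $(n)$ are likewise incomparable (in $(l)$ the complete index and the almost-nowhere-locally-totally-bounded index are required to differ, in $(n)$ they coincide). So you cannot literally reduce each block to one representative. This is not a real gap, however, because your forward and reverse constructions apply uniformly to every statement in each block: the counterexample pair $B,C$ built from the bad sequence is in fact \emph{both} complete, with $C$ almost nowhere locally totally bounded, so it refutes $(d),(f),(g)$ simultaneously; and the sets $A_n=\{x_k:k\geq n\}$ are all complete with $A_1$ almost nowhere locally totally bounded, refuting $(l),(m),(n)$ at once. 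Just be explicit about this when you write it up.
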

	\begin{proof}
		$(a) \Rightarrow (b)$: Suppose no such $\delta$ exists. Then for each $n \in \N$, there exists $x_n \in F_1$ and $y_n \in F_2$ with $d(x_n, y_n)< \frac{1}{n}$ and $\{x_n, ~y_n\}\cap \{x: t(x) \leq \frac{1}{n}\} \neq \emptyset$. By the uniform continuity of the functional $t$, $ \lim\limits_{n \to \infty}t(x_n)= \lim\limits_{n \to \infty}t(y_n)=0$. Since $F_2$ is complete and $F_1$ is closed, condition $(b)$ of Theorem [1] implies that $(x_n)$ and $(y_n)$ has a cluster point which will lie in $F_1 \cap F_2$. We arrive at a contradiction.
		
		$(b) \Rightarrow (c)$: This is immediate.
		
		$(c) \Rightarrow (d)$: Suppose $d(B, C)=0$. According to the hypothesis, there exists $\delta >0$ such that for all $n \in \N$, $ \exists ~b_n \in B, ~c_n \in C$ with $t(b_n)> \delta$, $ t(c_n)> \delta$ and $d(b_n, c_n)< \frac{1}{n}$. Since $C$ is almost nowhere locally compact, the sequence $(c_n)$ has a Cauchy subsequence say $(c_{n_k})$ and hence $(b_{n_k})$ is a Cauchy subsequence too. Since $B$ and $C$ are complete subsets of $X$, cluster point of both the subsequences lies in $B \cap C$. We arrive at a contradiction.
		
		$(d) \Rightarrow (e)$: Suppose $A$ does not have countable character, that is, there exists an open set $G$ containing $A$ such that for each $n \in \N,~B(A, 1/n) \subsetneq G$. Let $A_1=X\setminus G$.	Then for each $n \in \N$, we can find $x_n \in A$ and $y_n \in A_1$ such that $d(x_n,y_n) < \frac{1}{n}$. If $(x_n)$ has a Cauchy subsequence $(x_{n_k})$, then since $A$ is complete, there exists $x \in A$ such that $(x_{n_k})$ converges to $x$, hence $(y_{n_k})$ also converges to $x$. Consequently $x \in A \cap A_1$, which is a contradiction. Hence neither $(x_n)$ or $(y_n)$ has a Cauchy subsequence. Therefore, the sets $B_1=\{x_n : n \in \N \}$ and $B_2=\{y_n : n \in \N \}$ are disjoint complete sets in $X$. Also, $B_1$ is nowhere locally totally bounded. Therefore, according to the hypothesis $d(B_1, B_2) > 0$, a contradiction.
		
		$(e) \Rightarrow (f)$: Let $G=X\setminus B$. According to the hypothesis, there exists $r > 0$ such that $B(C, r) \subseteq G = G=X\setminus B$. Consequently, $d(B, C) > 0.$
		
		$(f) \Rightarrow (g)$: Suppose $d(B, C)=0$. Then for each $n \in \N$, we can find $x_n \in C$ and $y_n \in B$ such that $d(x_n,y_n) < \frac{1}{n}$. If $(y_n)$ has a Cauchy subsequence $(y_{n_k})$, then since $B$ is complete, there exists $y \in B$ such that $(y_{n_k})$ converges to $y$, hence $(x_{n_k})$ also converges to $y$. Consequently $y \in B \cap C$, which is a contradiction. Hence neither $(x_n)$ or $(y_n)$ has a Cauchy subsequence. Consider the disjoint sets $B_1=\{x_n : n \in \N \}$ and $B_2=\{y_n : n \in \N \}$. Clearly, $B_1$ is complete and almost nowhere locally totally bounded and $B_2$ is closed. Therefore, according to the hypothesis $d(B_1, B_2) > 0$, a contradiction.
		
		$(g) \Rightarrow (h)$: 
		By passing to a subsequence, we can assume $\underline{t}(F_n) < \frac{1}{n}$ and hence for each $n \in \N$, there exists $x_n \in F_n$ with $t(x_n)< \frac{1}{n}$. If $(x_n)$ has a Cauchy subsequence say $(x_{n_k})$, then the subsequence must converge and the convergent point must lie in $\bigcap \{F_n: n\in\mathbb{N}\}$, as $\langle F_n\rangle$ is a decreasing sequence of complete sets. So assume $(x_n)$ has no Cauchy subsequence. Without loss of generality, we can assume $(x_n)$ has all distinct terms. Let $C = \{x_n: n \in \N\}$. Then $C$ is an almost nowhere locally totally bounded as for each $\epsilon >0$, the set $\{x \in (x_n): t(x) \geq \epsilon \}$ is finite. Thus $C$ is not totally bounded and hence we can find a sequence $(a_n)$ in $C$ and $\delta >0$ such that the family of balls $\{B(a_n, \delta): n \in \N\}$ is pairwise disjoint. Since $\lim\limits_{n \to \infty}t(a_n)=0$, by passing to a tail of the sequence we can assume that for each $n$, $~\exists~ e_n \in B(a_n, \delta)\setminus \{a_n\}$ with $\lim\limits_{n \to \infty}d(a_n, e_n)=0$. Consequently, $\lim\limits_{n \to \infty}t(e_n)=0$ and hence the set $E= \{e_n: n \in \N\}$ is complete. But $d(A, E)=0$, where $A=\{a_n: n \in \N\}$ is closed almost nowhere locally totally bounded, a contradiction to the hypothesis.
		
		$(h) \Rightarrow (i)$: Let $F_n$ be a decreasing sequence of non-empty closed subsets of $X$ such that for some m $\in \mathbb{N}$, $F_m$ is complete and $\lim\limits_{n \to \infty} \underline{t}(F_n) = 0$. Then for each $k \in \N,~E_k=F_k \cap F_m$ is complete. Consequently, $E_1 \supseteq E_2 \supseteq ...$ is a decreasing sequence of non empty complete subsets of $X$. Also since $\hat{t}$ is monotone, $\hat{E_n} \leq \hat{F_n}~ \forall n \in \N$ and hence 
		$\lim\limits_{n \to \infty} \underline{t}(E_n) = 0$. So $\bigcap \{E_n: n\in\mathbb{N}\} \neq \phi$ and thus $\bigcap \{F_n: n\in\mathbb{N}\} \neq \phi$.
		
		$(i) \Rightarrow (j)$: This is immediate.
		
		$(j) \Rightarrow (k)$: This is immediate.
		
		$(k) \Rightarrow (a)$: Suppose $(x_n)$ be a sequence in $X$ with $\lim\limits_{n \to \infty}t(x_n)=0$ such that it has no Cauchy subsequence.	Without loss of generality, we may assume that for each $n$, $ t(x_n) \geq t(x_{n+1})$. The set $F_n = {\{x_k: k \geq n\}}$ is complete for each $n \in \N$. Also, $t(F_n) = t(\{x_k: k \geq n\})= t(x_n)$ and hence by $(k)$, $\bigcap \{F_n: n\in\mathbb{N}\}$ is non-empty which is precisely the set of cluster points of $(x_n)$. Since $(x_n)$ has no Cauchy subsequence, we arrive at a contradiction.
		
		$(a) \Rightarrow (l)$: Let $(A_n)$ be a sequence of closed sets in $X$ such that for some $k\in \mathbb{N}$, $A_k$ is complete in $X$ and for some $t\in \mathbb{N},~ (k\neq t)$, $A_t$ is almost nowhere locally totally bounded and $\bigcap\limits_{n=1}^{\infty}A_n=\emptyset$. Suppose $\bigcap\limits_{n=1}^{\infty}B(A_n,1/m) \neq \emptyset~\forall m \in \N$. Let $x_m \in \bigcap\limits_{n=1}^{\infty}B(A_n,1/m)~\forall m \in \N$. In particular, $x_m \in B(A_k, 1/m) ~\forall m \in \N$. Hence for each $m \in \N$, we can find $y_m \in A_k$ such that $d(x_m, y_m) < 1/m$. \\
		Now if $(x_n)$ has a Cauchy subsequence then so has the sequence $(y_n)$, say $(y_{n_k})$. Since $(y_m) \in A_k$ and $A_k$ is complete, the sequence $(y_{n_k})$ converges to some point $x \in A_k$. Hence the sequence $(x_{n_k})$ also converges to the point $x$. Note that $d(x, A_m) \leq d(x, x_{n_k})~+ ~d(x_{n_k}, A_m)$. Since $x_{n_k} \in B(A_m, 1/n_k),~d(x_{n_k}, A_m) < 1/n_k < 1/k$ and consequently $\lim\limits_{k \to \infty}d(x_{n_k}, A_m)=0$. Hence $d(x, A_m)=0$, since $A_m$ is closed, $x \in A_m$ and this is true for all $m \in \N$. Thus, $x \in \bigcap\limits_{n=1}^{\infty}A_n$, which is a contradiction. Hence the sequence $(x_m)$ can not have a Cauchy subsequence.\\
		As chosen previously, choose $(y_m) \in A_t$. Since $A_t$ is almost nowhere locally totally bounded and $(y_m)$ does not have any Cauchy subsequence, we can find a subsequence $(y_{m_k})$ of $(y_m)$ such that $\lim\limits_{k \to \infty} t(y_{m_k}) = 0$. Since the sequence $(y_{m_k})$ does not have any Cauchy subsequence, by Theorem [1] we get contradiction.
		
		$(l) \Rightarrow (m)$: This is immediate.
		
		$(m) \Rightarrow (d)$: This is immediate.
		
		Similarly, we can show that $(a) \Rightarrow (n) \Rightarrow (m) \Rightarrow (d)$.
	\end{proof}		
	\begin{theorem}
		Let $(X,d)$ be a metric space. The following statements are equivalent:
		\begin{enumerate}[label=(\alph*)]
			\item The completion $(\widehat{X},d)$ of $(X,d)$ is cofinally complete.
			\item Whenever $A$ is a complete subset of $X$ and $\{V_i : i \in I\}$ is a collection of open subsets of $X$ with $A\subseteq \bigcup V_i$, then $\exists\thinspace \delta>0$ such that each $x \in X$, there exists finite sets $V_1, V_2,...V_n$ such that $B(x,\delta) \subseteq \{X\setminus A\}~\bigcup~\bigcup\limits_{i=1}^{n}V_i$.
			\item Whenever $A$ is a complete subset of $X$ and $\{V_n : n \in \N\}$ is a countable collection of open subsets of $X$ with $A\subseteq \bigcup V_i$, then $\exists\thinspace \delta>0$ such that each $x \in X$, there exists finite sets $V_1, V_2,...V_n$ such that $B(x,\delta) \subseteq \{X\setminus A\}~\bigcup ~\bigcup\limits_{i=1}^{n}V_i$.
		\end{enumerate}
	\end{theorem}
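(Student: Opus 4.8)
The plan is to establish the cycle $(a)\Rightarrow(b)\Rightarrow(c)\Rightarrow(a)$. The implication $(b)\Rightarrow(c)$ is immediate, since a countable family of open sets is a special case of the families allowed in $(b)$; thus the two substantial implications are $(a)\Rightarrow(b)$ and $(c)\Rightarrow(a)$. Throughout I would work with the functional characterisation established earlier in this section, namely that $(\widehat{X},d)$ is cofinally complete if and only if every sequence $(x_n)$ in $X$ with $\lim_{n\to\infty}t(x_n)=0$ has a Cauchy subsequence.

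For $(a)\Rightarrow(b)$ I argue by contradiction. If the required $\delta$ fails for a complete set $A$ and an open cover $\{V_i\}$ of $A$, then for each $n$ there is $x_n\in X$ with $B(x_n,1/n)\cap A$ not covered by any finite subfamily of $\{V_i\}$. Since a totally bounded subset of the complete set $A$ has compact closure in $A$ and is therefore covered by finitely many $V_i$, each $B(x_n,1/n)\cap A$ must fail to be totally bounded. Picking $a_n\in B(x_n,1/n)\cap A$ and using $B(x_n,1/n)\cap A\subseteq B(a_n,2/n)$, the ball $B(a_n,2/n)$ is not totally bounded, whence $t(a_n)\le 2/n$ and so $\lim t(a_n)=0$. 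By $(a)$ the sequence $(a_n)$ has a Cauchy subsequence, which converges, by completeness of $A$, to some $a\in A$; choosing $V_{i_0}\ni a$ and $r$ with $B(a,r)\subseteq V_{i_0}$, a triangle-inequality estimate shows that for large $k$ the whole ball $B(x_{n_k},1/n_k)$ lies in $V_{i_0}$, so in particular $B(x_{n_k},1/n_k)\cap A$ is covered by the single set $V_{i_0}$, the desired contradiction.

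The crux is $(c)\Rightarrow(a)$, and here the main obstacle is to manufacture a complete set together with a countable cover that forces a contradiction; the difficulty is that the clause $X\setminus A$ lets every ball ``escape'' through the complement of $A$, so a thin (for instance discrete) choice of $A$ carries no information and $A$ must itself absorb the failure of total boundedness. Assuming $(a)$ fails, I take $(x_n)$ with $\lim t(x_n)=0$ and no Cauchy subsequence; passing to a subsequence I may assume the $x_n$ are pairwise $\sigma$-separated for some $\sigma>0$, and I choose radii $\rho_n\to 0$ with $t(x_n)<\rho_n<\sigma/4$, so that each $B(x_n,\rho_n)$ is not totally bounded and the balls $\{B(x_n,\rho_n)\}$ are pairwise at distance at least $\sigma/2$. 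Inside each $B(x_n,\rho_n)$ I select an infinite $\theta_n$-separated set $W_n$ and set $A=\bigcup_n W_n$. Because the clusters $W_n$ sit in mutually separated balls and each cluster is uniformly discrete, any Cauchy sequence in $A$ is eventually confined to a single $W_n$ and then eventually constant, so $A$ is complete. I then cover $A$ by the countable family of isolating balls $\{V^n_j\}$, where $V^n_j$ meets $A$ only in the single point $w^n_j$. Applying $(c)$ yields $\delta>0$ such that $B(x,\delta)\cap A$ is covered by finitely many $V^n_j$, hence finite, for every $x\in X$. Finally, choosing $m$ with $\rho_m<\delta$ gives $W_m\subseteq B(x_m,\rho_m)\cap A\subseteq B(x_m,\delta)\cap A$, an infinite set, contradicting finiteness. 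Thus every $(x_n)$ with $t(x_n)\to 0$ has a Cauchy subsequence, and $(\widehat{X},d)$ is cofinally complete.
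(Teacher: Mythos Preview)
Your proof is correct, and both substantial implications are handled by a genuinely different route from the paper's.

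For $(a)\Rightarrow(b)$ the paper lifts the problem to the completion: since $A$ is complete it is closed in $\widehat{X}$, one extends each $V_i$ to an open $W_i\subseteq\widehat{X}$, and then $\{W_i\}\cup\{\widehat{X}\setminus A\}$ is an open cover of the cofinally complete space $\widehat{X}$; the conclusion follows from a known Lebesgue-type characterisation of cofinal completeness (every open cover admits a $\delta>0$ such that each $\delta$-ball is covered by finitely many members). Your argument instead stays inside $X$ and uses only the $t$-functional criterion proved earlier: from the failure of (b) you extract points $a_n\in A$ with $t(a_n)\to 0$, pass to a Cauchy subsequence, and get a contradiction via a single covering set. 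Your approach is more self-contained (it does not invoke the external cover characterisation), at the cost of a slightly longer direct argument.

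For $(c)\Rightarrow(a)$ the paper argues that every complete subset of $X$ is itself cofinally complete by feeding an arbitrary relative open cover of $A$ into (c), and then concludes that $\widehat{X}$ is cofinally complete. Your argument is again more hands-on: assuming a sequence with $t(x_n)\to 0$ and no Cauchy subsequence, you build a concrete complete set $A=\bigcup_n W_n$ of uniformly discrete ``clusters'' inside shrinking non-totally-bounded balls, cover it by countably many isolating balls, and derive a contradiction from the finiteness of $B(x_m,\delta)\cap A$. Two points are worth noting in favour of your route. First, it respects the countability restriction in (c) explicitly, whereas the paper's argument, as written, applies (c) to an arbitrarily indexed cover. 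Second, your construction makes the role of non-total-boundedness transparent: the infinite separated sets $W_n$ are exactly what forces $B(x_m,\delta)\cap A$ to be infinite once $\rho_m<\delta$.
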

	\begin{proof}
		$(a) \Rightarrow (b)$: Since $A$ is a complete subset of $X$, $A$ is closed in the completion $(\widehat{X},d)$ of $(X,d)$. For each $i \in I$, $V_i = W_i \cap X$ where $W_i$ is an open subset of $(\widehat{X},d)$. The family $\mathcal{G} = \{W_i : i \in I\} \cap \{\widehat{X}\setminus A \}$ is an open cover of $\widehat{X}$. By [...], $\exists \delta > 0$ such that $\forall x \in X,~ B(x,\delta) = \{y \in \widehat{X} : d(x,y) < \delta \}$ is contained in some finite sets of the cover $\mathcal{G}$. Hence, $\exists \delta > 0$ such that $\forall x \in X$, there exists finite sets $V_1, V_2,...V_n$ such that $B(x,\delta) \subseteq \{X\setminus A\}~\bigcup~\bigcup\limits_{i=1}^{n}V_i$.
		
		$(b) \Rightarrow (c)$: This is immediate.
		
		$(c) \Rightarrow (a)$: We will prove that every complete subset $A$ of $X$ is cofinally complete. Let $A$ be any complete subset of $X$. Let $A \subseteq \bigcup\limits_{i \in I}U_i$, where $U_i$ is an open set in $A$ for each $i$. Thus, for each $i, ~U_i = V_i \cap  A$, for some open set $V_i$ in $X$. Therefore, $A \subseteq \bigcup\limits_{i \in I}(V_i \cap A) \subseteq \bigcup\limits_{i \in I}V_i$. According to the hypothesis, $\exists\thinspace \delta>0$ such that each $x \in X$, there exists finite sets $V_1, V_2,...V_n$ such that $B(x,\delta) \subseteq \{X\setminus A\}~\bigcup ~\bigcup\limits_{i=1}^{n}V_i$. Consequently, $B(x,\delta) \cap A \subseteq \bigcup\limits_{i=1}^{n}V_i$ and hence, $B(x,\delta) \cap A \subseteq \bigcup\limits_{i=1}^{n}U_i$. Hence, $A$ is cofinally complete. Thus $(\widehat{X},d)$ is cofinally complete.
	\end{proof}

	\begin{theorem}
		Let $(X,d)$ be a metric space. The following statements are equivalent:
		\begin{enumerate}[label=(\alph*)]
			\item[$(a)$] The completion $(\widehat{X},d)$ of $(X,d)$ is cofinally complete.
		     \item[$(b)$] 	Whenever $(Y,\rho)$ is a metric space and $f: (X,d) \rightarrow (Y,\rho)$ is Cauchy continuous such that for each $\epsilon > 0$, $f$ is uniformly continuous on $\{x \in X : t(x) > \epsilon\}$, then $f$ is uniformly continuous on $X$.
		     \item[$(c)$] If $f$ is a bounded Cauchy continuous real-valued function on $X$ such that for each $\epsilon >0$, $f$ is uniformly continuous on $\{x \in X : t(x) > \epsilon\}$, then $f$ is uniformly continuous on $X$.
		\end{enumerate}
	\end{theorem}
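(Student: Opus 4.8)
The plan is to establish the cycle $(a)\Rightarrow(b)\Rightarrow(c)\Rightarrow(a)$. The step $(b)\Rightarrow(c)$ needs no work: a function as in $(c)$ is precisely a Cauchy continuous map $f\colon(X,d)\to(\mathbb{R},|\cdot|)$ that is uniformly continuous on every set $\{x\in X: t(x)>\epsilon\}$, so it is one of the functions to which $(b)$ applies, and $(b)$ returns exactly the uniform continuity of $f$ on $X$ asserted in $(c)$.

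For $(a)\Rightarrow(b)$ I would argue by contradiction, using the sequential form of cofinal completeness of $\widehat{X}$ proved earlier, namely that every sequence $(x_n)$ in $X$ with $\lim_n t(x_n)=0$ has a Cauchy subsequence. Let $f$ be as in $(b)$. If $X$ is boundedly totally bounded then $\{x:t(x)>\epsilon\}=X$ and the hypothesis already gives uniform continuity on $X$; so assume $X$ is not boundedly totally bounded, whence by the lemma $t$ is uniformly continuous (in fact $2$-Lipschitz). Suppose $f$ is not uniformly continuous and pick $\epsilon_0>0$ and $(x_n),(y_n)$ with $d(x_n,y_n)\to0$ but $\rho(f(x_n),f(y_n))\ge\epsilon_0$. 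If some subsequence satisfies $t(x_{n_k})\ge\delta>0$, then uniform continuity of $t$ forces $y_{n_k}\in\{t>\delta/2\}$ for large $k$, and uniform continuity of $f$ on that set contradicts the separation. Otherwise $t(x_n)\to0$, so $(x_n)$ has a Cauchy subsequence; the interleaving $x_{n_k},y_{n_k},x_{n_{k+1}},\dots$ is then Cauchy (because $d(x_n,y_n)\to0$), and Cauchy continuity of $f$ forces $\rho(f(x_{n_k}),f(y_{n_k}))\to0$, again a contradiction.

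The substance of the theorem is $(c)\Rightarrow(a)$. By the characterization theorem it suffices to show that every sequence $(x_n)$ with $t(x_n)\to0$ has a Cauchy subsequence; I would assume the contrary, fix such an $(x_n)$ with no Cauchy subsequence (with distinct terms, after thinning), and manufacture a function violating $(c)$. Since $t(x_n)\to0$, the balls $B(x_n,\epsilon_n)$ fail to be totally bounded for suitable $\epsilon_n\downarrow0$ with $\epsilon_n>t(x_n)$, so I can pick $y_n\neq x_n$ with $d(x_n,y_n)<\epsilon_n$; then $(y_n)$ also has no Cauchy subsequence, and the set $A=\{x_n,y_n:n\in\mathbb{N}\}$ is closed and discrete in $X$ and, having no Cauchy subsequence, also closed and discrete in $\widehat{X}$. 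I would then define $F=\sum_n\phi_n$, where $\phi_n$ is a tent of height $1$ supported on a small ball $B(y_n,\rho_n/2)$ with $\rho_n\to0$, the radii chosen so the supports are pairwise disjoint and avoid every $x_m$; thus $F(y_n)=1$, $F(x_n)=0$, $F$ is $[0,1]$-valued, and $F$ fails uniform continuity on $X$ because $d(x_n,y_n)\to0$.

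The hard part is verifying the two positive properties of $F$, and both reduce to the geometry encoded by $t$. For Cauchy continuity I would show the family of supports is locally finite in $\widehat{X}$: an accumulation point of infinitely many supports would yield a subsequence $y_{n_k}$ converging in $\widehat{X}$, i.e. a Cauchy subsequence of $(y_n)$, which is excluded; local finiteness makes $\sum_n\phi_n$ extend to a continuous function on $\widehat{X}$, and a real-valued function is Cauchy continuous exactly when it admits such an extension. For uniform continuity on each $\{t>\epsilon\}$ I would use that $t$ is Lipschitz and $t(y_n)\to0$, so only finitely many supports meet $\{t>\epsilon\}$; hence on that set $F$ coincides with a finite sum of Lipschitz tents and is Lipschitz. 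These two facts contradict $(c)$, closing the cycle. I expect the delicate points to be choosing the radii $\rho_n$ so as to secure disjointness, local finiteness in the completion, and avoidance of the $x_m$'s simultaneously, together with the clean passage between ``no Cauchy subsequence in $X$'' and the corresponding discreteness and local finiteness statements in $\widehat{X}$.
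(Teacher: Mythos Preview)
Your proof is correct and follows essentially the same route as the paper: the cycle $(a)\Rightarrow(b)\Rightarrow(c)\Rightarrow(a)$, the contradiction argument for $(a)\Rightarrow(b)$ via $t(x_n)\to0$ and interleaving a Cauchy subsequence, and the tent-function counterexample for $(c)\Rightarrow(a)$.

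One small simplification is worth noting. In $(c)\Rightarrow(a)$ you center your tents at the $y_n$'s and then have to choose the radii $\rho_n$ carefully to secure disjoint supports and avoidance of the $x_m$'s. The paper first thins $(x_n)$ so that the closed balls $C(x_n,\delta)$ are pairwise disjoint for some fixed $\delta>0$ (possible since $(x_n)$ has no Cauchy subsequence), then picks $y_n\in B(x_n,\delta_n)\setminus\{x_n\}$ with $\delta_n=\min\{\delta,1/n\}$, sets $\epsilon_n=d(x_n,y_n)$, and centers the tent at $x_n$ with radius exactly $\epsilon_n$. This makes disjointness of supports automatic (each support sits inside $C(x_n,\delta)$), gives $f(x_n)=1$ and $f(y_n)=0$ without any extra avoidance condition, and the Cauchy continuity and the finiteness of supports meeting $\{t>\epsilon\}$ follow immediately from $\epsilon_n\to0$ and the uniform continuity of $t$. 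Your version works, but the paper's choice of centers removes precisely the ``delicate points'' you flagged.
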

	\begin{proof}
		$(a) \Rightarrow (b)$: Suppose $f$ is uniformly continuous on $\{x \in X : t(x) > \epsilon\}$ for each $\epsilon > 0$, yet $f$ is not globally uniformly continuous. Therefore there exists $\lambda > 0$ such that for each n $\in \N$, $\exists x_n,~y_n \in X$ such that $d(x_n, y_n) < \frac{1}{n}$ but $\rho(f(x_n), f(y_n)) > \lambda$. Thus for each $\epsilon > 0$, eventually $\{x_n, y_n\} \cap \{x \in X : t(x) \leq \epsilon\} \neq \phi$. Since $t$ is uniformly continuous, $\lim\limits_{n \to \infty} t(x_n) = \lim\limits_{n \to \infty} t(y_n) = 0$. By condition (2) of Theorem [1], there exists a Cauchy subsequence $(x_{n_k})$ of $(x_n)$. Thus the sequence $x_{n_1}, y_{n_1}, x_{n_2}, y_{n_2},...$ is a Cauchy sequence but its image is not Cauchy, a contradiction.
		
		$(b) \Rightarrow (c)$: This is immediate.
		
		$(c) \Rightarrow (a)$: Suppose $(\widehat{X},d)$ is not cofinally complete. Therefore, $\exists (x_n) \in X$ such that $\lim\limits_{n \to \infty} t(x_n) = 0$ but it has no Cauchy subsequence. Thus $\exists \delta > 0$ such that by passing to a subsequence the closed balls $C(x_n, \delta)$ are disjoint $\forall n \in \N$. Also, $\lim\limits_{n \to \infty} t(x_n) = 0$ implies that we can assume $t(x_n) <$ min$\{\delta, \frac{1}{n}\}~\forall n \in \N$. Let $\delta_n$ = min$\{\delta, \frac{1}{n}\}$ $\forall n \in \N$ . As $B(x_n, \delta_n)$ is not totally bounded, we can choose $y_n \neq x_n$ such that $d(x_n, y_n) < \delta_n$.  Let $\epsilon_n = d(x_n, y_n)~\forall n$. Define Lipschitz function $f_n$ for each $n$ as follows:
		$$f_n(x)=\left\{ \begin{array}{lll}
		1- \frac{d(x, x_n)}{\epsilon_n}     & : & x\in C(x_n, \epsilon_n)\\
		~~~~~~~	0     & :& otherwise
		\end{array}\right.$$
		Clearly $f_n$ is Lipschitz (choose Lipschitz constant to be any real number greater than $\frac{1}{\epsilon_n}$). Let $f:= \displaystyle{\sum_{n=1}^{\infty}}f_n$, $f$ is a bounded Cauchy continuous function. Let $\epsilon > 0$, since $t$ is uniformly continuous, $\lim\limits_{n \to \infty} t(x_n) = 0$ and $\lim\limits_{n \to \infty} t(\epsilon_n) = 0$, $\exists k \in \N$ such that $\{x : t(x) > \epsilon\}~\bigcap~ \displaystyle{\bigcup_{n=k+1}^{\infty}}C(x_n, \epsilon_n) = \phi$. Thus $f$ restricted to $\{x : t(x) > \epsilon\}$ is Lipschitz continuous but it is not globally uniformly continuous as $\forall n \in \N$ $d(x_n,y_n) < \frac{1}{n}$ yet $f(x_n)-f(y_n)=1$, a contradiction.
	\end{proof}

\end{document}